\documentclass[11pt,a4paper,reqno]{amsart}
\usepackage[english]{babel}
\usepackage[T1]{fontenc}
\usepackage{verbatim}
\usepackage{palatino}
\usepackage{amsmath}
\usepackage{mathabx}
\usepackage{amssymb}
\usepackage{amsthm}
\usepackage{amsfonts}
\usepackage{graphicx}
\usepackage{esint}
\usepackage{color}
\usepackage{mathtools}
\usepackage{overpic}

\usepackage[colorlinks = true, citecolor = black]{hyperref}
\newcommand{\R}{\mathbb{R}}

\newcommand{\calL}{\mathcal{L}}

\newcommand{\calH}{\mathcal{H}}

\newcommand{\calC}{\mathcal{C}}
\newcommand{\calE}{\mathcal{E}}

\newcommand{\calQ}{\mathcal{Q}}

\newcommand{\calU}{\mathcal{U}}

\newcommand{\calZ}{\mathcal{Z}}

\newcommand{\diam}{\operatorname{diam}}

\def\dz{\delta}

\def\gz{{\gamma}}

\def\Barint_#1{\mathchoice
          {\mathop{\vrule width 6pt height 3 pt depth -2.5pt
                  \kern -8pt \intop}\nolimits_{#1}}%
          {\mathop{\vrule width 5pt height 3 pt depth -2.6pt
                  \kern -6pt \intop}\nolimits_{#1}}%
          {\mathop{\vrule width 5pt height 3 pt depth -2.6pt
                  \kern -6pt \intop}\nolimits_{#1}}%
          {\mathop{\vrule width 5pt height 3 pt depth -2.6pt
                  \kern -6pt \intop}\nolimits_{#1}}}

\numberwithin{equation}{section}

\theoremstyle{plain}
\newtheorem{thm}[equation]{Theorem}
\newtheorem*{"thm"}{"Theorem"}
\newtheorem{conjecture}[equation]{Conjecture}
\newtheorem{lemma}[equation]{Lemma}

\newtheorem{ex}[equation]{Example}
\newtheorem{cor}[equation]{Corollary}

\theoremstyle{definition}

\newtheorem{definition}[equation]{Definition}

\theoremstyle{remark}
\newtheorem{remark}[equation]{Remark}

\newcommand{\nref}[1]{(\hyperref[#1]{#1})}

\DeclareMathSymbol{\intop}  {\mathop}{mathx}{"B3}
\author{Jiayin Liu}
\title[Restricted Projections to Hyperplanes in $\mathbb{R}^n$]{Restricted Projections to Hyperplanes in $\R^n$}
\address{Mathematics Area, SISSA, Trieste, Italy}
\email{jliu@sissa.it}
\subjclass[2010]{28A80, 28A78}
\keywords{Restricted projection, Cinematic maps, Hausdorff dimension, Line-cone incidence}
\thanks{}

\begin{document}

\begin{abstract}
We study dimensions of sets projected to an $(n-2)$-dimensional family of hyperplanes in $\mathbb{R}^n$ under curvature conditions.
Let $n\ge 3$ and $\Sigma \subset S^{n-1}$ be an $(n-2)$-dimensional $C^2$ manifold such that $\Sigma$ has non-vanishing geodesic curvature ($n=3$)/sectional curvature $>1$ ($n \ge 4)$. Let $Z \subset \mathbb{R}^{n}$ be analytic with $\dim Z  \le n-2$ and $0 < s < \dim Z$. Then
\begin{equation*}
  \dim \{x \in \Sigma : \dim \pi_{T_xS^{n-1}}(Z) < s\} \le s
\end{equation*}
where $\pi_{T_xS^{n-1}}$ is the orthogonal projection from $\mathbb{R}^n$ to the tangent space $T_xS^{n-1}$.
In particular,
for $\mathcal{H}^{n-2}$-a.e. $x \in \Sigma$, $\dim \pi_{T_xS^{n-1}}(Z) = \dim Z$. When $n=3$ and $\dim Z < 1$, the quantitative estimate improves the one obtained by Gan-Guo-Guth-Harris-Maldague-Wang.

For the case $\dim Z > n-2$, if in addition $\pi_{T_yS^{n-1}}(Z) \le n-2$ for some $y \in S^{n-1}$, we show that
$\dim \pi_{T_xS^{n-1}}(Z) = \min\{\dim Z, n-1\}$ for $\mathcal{H}^{n-2}$-a.e. $x \in \Sigma$.
\end{abstract}

\maketitle
\tableofcontents

\section{Introduction}
This paper focuses on studying projection to a restricted family of
hyperplanes for any analytic set in $\R^n$. We first fix several notations.
Denote by $\pi_W: \R^n \to W$ the orthogonal projection to the ($k$-dimensional, $k=1, \cdots, n-1$) subspace $W \subset \R^n$ and by $T_x M$ the tangent space at $x \in M$ of a differentiable manifold $M \subset\R^n $. The notation $\dim$ will always refer to Hausdorff dimension.

In \cite{FasslerOrponen14}, F\"{a}ssler and Orponen conjectured that
\begin{conjecture} \label{conjfo}
   Let $Z$ be an analytic set in $\R^3$ and $\gz \in C^2([0,1], S^2)$ be a non-degenerate curve, that is
   \begin{equation}\label{nondeg0}
    {\rm span}\{ \gamma(\theta), \dot \gamma(\theta), \ddot \gamma(\theta)\}=\mathbb{R}^3 \text{  for all }  \theta \in [0,1].
\end{equation}
    Then
    \begin{enumerate}
      \item[(i)]
 \begin{equation*}
   \dim [\pi_{{\rm span}\{\gamma(\theta)\}}(Z)]  = \min\{\dim Z, 1 \}, \quad \calH^1\mbox{-\rm a.e.} \ \theta\in [0,1];
\end{equation*}
      \item[(ii)]
      \begin{equation*}
   \dim [\pi_{T_{\gamma(\theta)}S^2}(Z)]  = \min\{\dim Z, 2 \}, \quad \calH^1\mbox{-\rm a.e.} \ \theta\in [0,1].
\end{equation*}
    \end{enumerate}
\end{conjecture}
Conjecture \ref{conjfo} (i) was confirmed by Pramanik-Yang-Zahl \cite{2022arXiv220702259P} and Conjecture \ref{conjfo} (ii) was confirmed by Gan-Guo-Guth-Harris-Maldague-Wang \cite{gan2024}.

A higher
dimensional extension of Conjecture \ref{conjfo} (i) was studied in \cite{LIU2025}, which states that
\begin{thm}\label{2025}
  For $n \ge 4$, let $Z$ be an analytic set in $\R^n$ and $\Sigma \subset S^{n-1}$ be an $(n-2)$-dimensional $C^2$ manifold with sectional curvature $>1$.
    Then
 \begin{equation*}
   \dim [\pi_{{\rm span}\{x\}}(Z)]  = \min\{\dim Z, 1 \}, \quad \calH^{n-2} \mbox{-\rm a.e.} \ x\in \Sigma.
\end{equation*}
\end{thm}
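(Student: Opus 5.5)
Theorem \ref{2025} is quoted from \cite{LIU2025}, so here I only sketch how I would prove it. The plan is the one used for Conjecture \ref{conjfo} (i) by Pramanik--Yang--Zahl: view the family of line projections $x\mapsto \pi_{{\rm span}\{x\}}$, $x\in\Sigma$, as a cinematic-type family of maps. Then reduce the exceptional set estimate to an incidence bound between $\delta$-tubes/slabs and curved objects, which is driven by the curvature assumption on $\Sigma$.

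First I reduce to $\dim Z\le 1$, since a larger set contains compact subsets of every dimension below $\dim Z$. Then I aim for the quantitative bound
\begin{equation*}
\dim\{x\in\Sigma:\dim \pi_{{\rm span}\{x\}}(Z)<s\}\le n-3+s, \qquad 0<s<\dim Z .
\end{equation*}
Since $s<1$, the right-hand side is $<n-2$, which gives the a.e.\ statement. I would argue by contradiction and discretize at scale $\delta$. Suppose there is a $\delta$-separated set $E\subset\Sigma$ of exceptional directions with $\#E\approx\delta^{-t}$, $t>n-3+s$, and a Frostman measure on $Z$ of exponent $\alpha>s$. For each $x\in E$ the projection $\pi_x(Z)$ is covered by about $\delta^{-s}$ intervals of length $\delta$. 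Pulling these back gives $\delta$-slabs orthogonal to $x$, that is, $\delta$-neighborhoods of hyperplanes $\{y: y\cdot x=c\}$, and together they cover most of the mass of $Z$.

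Next I dualize the incidence. A point $z\in Z$ lies in the slab $\{|y\cdot x-c|<\delta\}$ exactly when the point $(x,c)$ lies within $\delta$ of the hypersurface $\{(x,c): c=z\cdot x,\ x\in\Sigma\}$ in the $(n-1)$-dimensional parameter space $\Sigma\times\R$. So each point of $Z$ becomes a graph of the linear function $x\mapsto z\cdot x$ restricted to $\Sigma$. The sectional curvature $>1$ hypothesis (second fundamental form of $\Sigma\subset S^{n-1}$ nondegenerate in a strong sense) is what makes two such graphs $z\ne z'$ intersect transversally. The difference $(z-z')\cdot x$ vanishes on the curved set $\Sigma\cap (z-z')^\perp$, and curvature forces it to vanish to at most first order there, with controlled tangency. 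This is the analogue of the cinematic curvature condition. I would then apply a Wolff/Pramanik--Yang--Zahl type incidence estimate for $\delta$-neighborhoods of such a curvilinear family with Frostman-type spacing on both sides. That estimate yields $\#E\lesssim \delta^{-(n-3+s)-\epsilon}$, contradicting the choice of $t$.

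The main obstacle is the incidence/maximal estimate in the higher-dimensional setting. The Pramanik--Yang--Zahl bound is for one-parameter families of planar curves. Here the parameter space $\Sigma$ is $(n-2)$-dimensional, and one has to handle "tangency" of hypersurfaces in $\Sigma\times\R$. My first attempt would be to foliate $\Sigma$ by a family of curves along which the restriction is non-degenerate in the sense of \eqref{nondeg0}; sectional curvature $>1$ should guarantee such curves through every point in every direction. I would then apply the one-dimensional result on each curve and integrate with Fubini over an $(n-3)$-parameter family of slices. That slicing is exactly what produces the additive $n-3$. The delicate point is the measurability and uniformity of the constants across the slices.
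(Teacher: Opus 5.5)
The paper gives no proof of this statement (it is quoted from \cite{LIU2025}), so I am judging your sketch on its own. As written it has a genuine gap. Everything rests on the ``Wolff/Pramanik--Yang--Zahl type incidence estimate'' for the $n$-parameter family of hypersurfaces $\{c=z\cdot x\}$ in $\Sigma\times\R$, and you assume this estimate rather than prove it. These graphs are also genuinely tangent, not merely first-order transverse: the difference $(z-z')\cdot x$ has a critical zero at every $x$ with $z-z'\in\R\nu(x)$. So this is a second-order, cinematic-type incidence problem for hypersurfaces, which is the whole difficulty rather than a technicality.

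Your fallback does not supply the estimate, for two reasons.
\begin{itemize}
\item Condition \eqref{nondeg0} cannot hold for a curve in $S^{n-1}$ with $n\ge4$, since ${\rm span}\{\gamma,\dot\gamma,\ddot\gamma\}$ is at most $3$-dimensional. The only curves you can feed into Pramanik--Yang--Zahl are sections $\Sigma\cap V$ by great $2$-spheres, $V\in G(n,3)$. Their theorem then applies to $\pi_V(Z)\subset V\cong\R^3$, not to $Z$. So you additionally need $\dim\pi_V(Z)\ge\min\{\dim Z,1\}$ for the $3$-planes of your foliation, which is another restricted projection problem you do not address.
\item Fubini does not produce an additive $n-3$ for Hausdorff dimension. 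Take one point on each leaf, chosen along the graph of a rough function of the leaf parameter: every slice is $0$-dimensional, yet the set has dimension $>n-3$. Moreover, the per-leaf input holds only for almost every leaf, so the exceptional set could sit on a null family of bad leaves.
\end{itemize}
Hence the bound $n-3+s$ does not follow from slicing.

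What slicing does give, via the coarea formula rather than a dimension count, is the qualitative statement. That is all the theorem asserts, so you can drop the quantitative target.
\begin{itemize}
\item Reduce to compact $Z$ with $\dim Z\le1$. The exceptional set is then Borel, by upper semicontinuity of $x\mapsto\calH^q_\infty(\pi_{\R x}Z)$.
\item Choose $W\in G(n,2)$ with $\dim\pi_{W^\perp}(Z)=\dim Z$ (Mattila), and with $\Sigma\cap W$ and $T_W:=\{x\in\Sigma:\nu(x)\perp W\}$ both $\calH^{n-2}$-null (Fubini on $\Sigma\times G(n,2)$). All three conditions are generic.
\item On $\Sigma_0:=\Sigma\setminus(W\cup T_W)$, the map $\psi(x):=\pi_{W^\perp}x/|\pi_{W^\perp}x|\in S(W^\perp)\cong S^{n-3}$ is a $C^2$ submersion. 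Its fibres in $S^{n-1}$ are open half great $2$-spheres of $V_v:=W\oplus\R v$, and $T_x(V_v\cap S^{n-1})\subset T_x\Sigma$ holds iff $\nu(x)\perp W$.
\item For a unit-speed leaf $\gamma\subset\Sigma\cap V_v$ one has $\ddot\gamma+\gamma=\kappa_g N$, where $N$ is the unit normal of $\gamma$ inside $V_v\cap S^{n-1}$. Pairing with $\nu$ gives Meusnier's relation $\kappa_g\,(N\cdot\nu)=\mathrm{II}(\dot\gamma,\dot\gamma)$, so $|\kappa_g|\ge\underline\kappa>0$ and each leaf satisfies \eqref{nondeg0} inside $V_v$. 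This is exactly where sectional curvature $>1$ (definiteness of $\mathrm{II}$) enters.
\item By Marstrand--Mattila in $W^\perp$, for $\calH^{n-3}$-a.e.\ $v$ one has $\dim\pi_{V_v}(Z)\ge\dim\pi_{\R v}(\pi_{W^\perp}Z)=\dim Z$. Since $\pi_{\R x}=\pi_{\R x}\circ\pi_{V_v}$ for $x\in V_v$, Pramanik--Yang--Zahl applied to $\pi_{V_v}(Z)$ shows that the exceptional set meets almost every leaf in an $\calH^1$-null set.
\item The coarea formula for $\psi|_{\Sigma_0}$, whose Jacobian is positive, then makes the exceptional set $\calH^{n-2}$-null.
\end{itemize}
This is a complete and fairly soft proof of the a.e.\ statement. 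A genuine $\delta$-discretised incidence estimate of the kind you describe is needed only if you want a quantitative exceptional-set bound.
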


This paper focuses on higher dimensional extension of Conjecture \ref{conjfo} (ii) in the spirit of Theorem \ref{2025}.
\begin{conjecture} \label{conjl}
   For $n \ge 4$, let $Z$ be an analytic set in $\R^n$ and $\Sigma \subset S^{n-1}$ be an $(n-2)$-dimensional $C^2$ manifold with sectional curvature $>1$.
    Then
      \begin{equation*}
   \dim [\pi_{T_{x}S^{n-1}}(Z)]  = \min\{\dim Z, n-1 \}, \quad \calH^{n-2} \ a.e. \ x\in \Sigma.
\end{equation*}
\end{conjecture}

In this paper, we show that Conjecture \ref{conjl} is true if $\dim Z \le n-2$ and  we also make partial progress when $\dim Z >n-2$. Note that the non-degenerate condition \eqref{nondeg0} is equivalent to $\gamma$ having non-vanishing geodesic curvature (c.f. \cite[Page 3]{LIU2025}).

\begin{thm}\label{planemain}
For $n \ge 3$, let $Z$ be an analytic set in $\R^n$ with $\dim Z \le n-2$ and $\Sigma \subset S^{n-1}$ be an $(n-2)$-dimensional $C^2$ manifold having
\begin{itemize}
  \item non-vanishing geodesic curvature if $n=3$;
  \item sectional curvature $>1$ if $n \ge 4$.
\end{itemize}
Then, for any $s < \dim Z$, we have
  $$  \dim \{ x \in \Sigma : \dim [\pi_{T_{x}S^{n-1}}(Z)] < s  \} \le s.  $$
  In particular,
\begin{equation*}
   \dim [\pi_{T_{x}S^{n-1}}(Z)]  = \dim Z, \quad \calH^{n-2} \mbox{-\rm a.e.} \ x \in \Sigma.
\end{equation*}
\end{thm}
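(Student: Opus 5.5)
We would attack the exceptional set estimate with the cinematic / line--cone incidence framework that proved Theorem \ref{2025}, moving the problem from projections onto hyperplanes to projections onto lines in a dual space. The starting point is that $\pi_{T_xS^{n-1}}(z) = z - \langle z,x\rangle x$. The fibres of $\pi_{T_xS^{n-1}}$ are the lines parallel to $x$. So $\dim \pi_{T_xS^{n-1}}(Z)<s$ means that $Z$ is covered efficiently by tubes in direction $x$, i.e. many points of $Z$ lie on few lines with direction $x$.

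The plan is a contradiction argument. Suppose the exceptional set $E=\{x\in\Sigma:\dim\pi_{T_xS^{n-1}}(Z)<s\}$ has $\dim E>s$. By Frostman's lemma, choose a $t$-Frostman measure $\nu$ on $E$ with $t>s$ and an $\alpha$-Frostman measure $\mu$ on $Z$ with $s<\alpha<\dim Z\le n-2$. We then discretize at scale $\delta$. For each $x\in\operatorname{supp}\nu$ there is a family $\mathbb{T}_x$ of $\delta$-tubes in direction $x$, with $\#\mathbb{T}_x\lesssim\delta^{-s}$ (after pigeonholing scales), carrying a large portion of $\mu$.

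The key geometric step is to recast this as an incidence problem between $\delta$-balls (points of $Z$) and $\delta$-tubes whose directions lie in the curved $(n-2)$-manifold $\Sigma$. Fix a point $z$ and vary $x$ over $\Sigma$. The set of lines through $z$ with direction in $\Sigma$ forms a cone over $\Sigma$ with vertex $z$. The curvature assumption (geodesic curvature for $n=3$, sectional curvature $>1$ for $n\ge4$) makes these cones nondegenerate, in the sense of a cinematic family: two cones with distinct vertices intersect transversally in a controlled way. This is exactly the line--cone incidence structure exploited in Theorem \ref{2025}. We would establish the analogue of the key incidence lemma there, namely a Furstenberg-type bound: if $\#\mathbb{T}_x\lesssim\delta^{-s}$ for a $t$-dimensional set of directions $x$ with $t>s$, then the union of the tubes has $\mu$-mass at most $\delta^{\epsilon}$, provided $\alpha\le n-2$. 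The restriction $\dim Z\le n-2$ is what makes the counting work. Two points of $Z$ at distance $r$ determine the direction $(z-z')/|z-z'|$, and a $\delta$-tube through both forces $x$ into a $\delta/r$-neighbourhood of that single direction. Using $\Sigma$'s curvature and a Kaufman-type double counting, the energy
\[
\int\!\!\int \nu\big(\{x : |\pi_{T_xS^{n-1}}(z-z')|\le\delta\}\big)\,d\mu(z)\,d\mu(z')
\]
is bounded by $\delta^{t}$ times $\iint |z-z'|^{-t}\,d\mu\,d\mu$, which is finite when $t<\alpha$. This handles the regime $t<\alpha$ directly and gives the bound $s$ after letting $\alpha\uparrow$.

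The main obstacle is that the naive Kaufman argument only yields $\dim E\le s$ if the $\nu$-measure of a $\delta/r$-cap around a single direction is $\lesssim (\delta/r)^t$. That estimate is automatic from Frostman only when $t$ is at most the dimension of $\Sigma$ near that direction; this holds since $t\le n-2=\dim\Sigma$, so the caps on $\Sigma$ behave correctly. The genuinely delicate part is the regime where the direction $(z-z')/|z-z'|$ lies near $\Sigma$ tangentially. There the preimage set $\{x\in\Sigma: x\parallel z-z'+O(\delta)\}$ could be elongated, and the curvature condition must be used to show it is still a cap of radius $\lesssim\delta/r$ rather than a tube. We would try first to prove this by a transversality computation showing that $x\mapsto x/\langle x,v\rangle$ restricted to $\Sigma$ has non-degenerate differential. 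After this, the conclusion that $\dim\pi_{T_xS^{n-1}}(Z)=\dim Z$ for $\calH^{n-2}$-a.e. $x$ follows by taking $s\uparrow\dim Z$ along a countable sequence, since a set of dimension at most $s<n-2$ has $\calH^{n-2}$-measure zero.
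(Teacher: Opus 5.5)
Your argument is correct, and the Kaufman-type bound you write down already proves the theorem on its own. It is a more direct route than the paper's.

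\textbf{Comparison with the paper.} The paper never puts a Frostman measure on the exceptional set. Instead it encodes $\pi_{T_{\Sigma(x)}S^{n-1}}(z)$ as the graph of $f_z$ in $[0,1]^{2n-3}$. It then uses the curvature of the dual manifold $\Sigma^\ast$ to show that $\{f_z\}$ is a cinematic family. From this it deduces that the shadow of $f_z^\delta\cap f_{z'}^\delta$ in parameter space lies in $O(1)$ balls of radius $\lesssim\delta/|z-z'|$. Finally it plays a Cauchy--Schwarz lower bound $|\calE|_\delta\gtrsim\delta^{O(\epsilon)-2s'}$ for a union of graph pieces against the upper bound $|\calE|_\delta\lesssim\delta^{-s-s'}$ coming from the exceptional set. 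Since $f_z(x)-f_{z'}(x)$ is just $\pi_{T_{\Sigma(x)}S^{n-1}}(z-z')$ written in the frame $\{e_i,\nu\}$, that shadow bound is exactly your cap estimate. Both proofs double count the same triples $(x,z,z')$ with $|\pi_{T_xS^{n-1}}(z-z')|\lesssim\delta$. You get the key estimate from elementary geometry and run Cauchy--Schwarz fibrewise on the projected measures, while the paper runs it on a union of graphs in $[0,1]^{2n-3}$. The paper's machinery is built to be reused for Theorem \ref{cormain}, where $\dim Z>n-2$ and one must also control which pairs interact. For the present statement your route is simpler and strictly more general.

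\textbf{The obstacle you flag is not real.} Since $|\pi_{T_xS^{n-1}}(v)|=|v|\sin\angle(x,v)$, the set $\{x\in\Sigma:|\pi_{T_xS^{n-1}}(v)|\le\delta\}$ is contained in $B(v/|v|,2\delta/|v|)\cup B(-v/|v|,2\delta/|v|)$. The Frostman bound $\nu(B(p,\rho))\lesssim\rho^t$ holds for every ball in $\R^n$, not only for balls centred on $\Sigma$. So $\nu$ of that set is $\lesssim(\delta/|v|)^t$ however $v/|v|$ sits relative to $\Sigma$. Nothing can be elongated, and you need no transversality computation for $x\mapsto x/\langle x,v\rangle$, no cone incidence and no separate Furstenberg lemma; the latter is just a restatement of the double count. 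There is also no regime other than $t<\alpha$, since you may simply take $t\in(s,\min\{\dim E,\alpha\})$.

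\textbf{What your proof actually uses.} It uses neither the curvature of $\Sigma$ nor its manifold structure. It is the classical Kaufman--Mattila bound $\dim\{x\in S^{n-1}:\dim\pi_{x^\perp}(Z)<s\}\le s$ for $s<\dim Z$, restricted to $\Sigma$. The hypothesis $\dim Z\le n-2$ enters only in the final step, to make the exceptional sets $\calH^{n-2}$-null; it is not what makes the counting work.

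\textbf{Closing the argument.} Either of the following finishes it cleanly.
\begin{itemize}
\item Use $\int I_\beta((\pi_{T_xS^{n-1}})_\#\mu)\,d\nu(x)\lesssim I_\beta(\mu)<\infty$ with $s<\beta<\min\{t,\alpha\}$.
\item Pigeonhole a common scale $\delta=2^{-k}$ on a set of $x$ of $\nu$-measure $\gtrsim k^{-2}$. Apply Cauchy--Schwarz over the $\le\delta^{-s}$ covering balls to get $(\mu\times\mu)\{(z,z'):|\pi_{T_xS^{n-1}}(z-z')|\lesssim\delta\}\gtrsim k^{-4}\delta^{s}$ for those $x$. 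This contradicts your $\delta^tI_t(\mu)$ bound as $k\to\infty$, because $t>s$.
\end{itemize}
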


\begin{remark} \label{rmkmain0}
  When $n=3$, under the same assumption on $\Sigma$, Gan-Guo-Guth-Harris-Maldague-Wang \cite[Theorem 1]{gan2024} showed that for any $Z\subset \R^3$,
    $$  \dim \{ x \in \Sigma : \dim [\pi_{T_{x}S^{2}}(Z)] < s  \} \le s + 1-\dim Z.  $$
  Thus, if $\dim Z < 1$, Theorem \ref{planemain} improves their estimate.
\end{remark}

\begin{thm}\label{cormain}
For $n \ge 3$, let $Z$ be an analytic set in $\R^n$ with $\dim Z > n-2$ and $\Sigma \subset S^{n-1}$ be as in Theorem \ref{planemain}.
Moreover, if there exists $y \in S^{n-1}$ such that
   \begin{equation}\label{le1}
     \dim[\pi_{T_{y}S^{n-1}}(Z)] \le n-2,
   \end{equation}
  then,
  \begin{equation*}
   \dim [\pi_{T_{x}S^{n-1}}(Z)]  = \min\{\dim Z, n-1 \}, \quad \calH^{n-2} \mbox{-\rm a.e.} \ x \in \Sigma.
\end{equation*}
\end{thm}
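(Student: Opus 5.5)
The plan is to reduce Theorem \ref{cormain} to Theorem \ref{planemain} by slicing $Z$ along the fibres of the exceptional projection $\pi_{T_yS^{n-1}}$. These fibres are lines parallel to $y$. After rotating, take $y=e_n$, so that $\pi_{T_yS^{n-1}}(z)=(z_1,\dots,z_{n-1})$. Write $P=\pi_{T_yS^{n-1}}(Z)\subset \R^{n-1}$. By \eqref{le1}, $\dim P\le n-2$.

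For $x\in\Sigma$ with $x_n\neq 0$ (we may discard the points with $x_n=0$, which form a lower-dimensional subset of $\Sigma$ by the curvature assumption), the fibre $\{p\}\times\R$ is mapped by $\pi_{T_xS^{n-1}}$ injectively and affinely onto a line in $T_xS^{n-1}$ with direction $\pi_{T_xS^{n-1}}(e_n)\neq 0$. Hence $\pi_{T_xS^{n-1}}(Z)$ is a union, over $p\in P$, of the images of the fibre slices $Z_p=Z\cap(\{p\}\times\R)$. Each image has dimension $\dim Z_p$, and the images lie on a family of parallel lines in $T_xS^{n-1}$ indexed by the projection of $P$ along that direction.

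Set $t=\dim Z-1>n-3$; we may assume $\dim Z\le n-1$, otherwise we replace $Z$ by a subset. By the Marstrand slicing inequality together with Frostman's lemma for analytic sets, there is a subset $P'\subset P$ with $\dim P'\ge t$ such that $\dim Z_p>\dim Z-\dim P'-\varepsilon$ for $p\in P'$. Embed $P'$ as $\tilde P=P'\times\{0\}\subset\R^n$. Then $\dim\tilde P\le n-2$, so Theorem \ref{planemain} applies to $\tilde P$: for $\calH^{n-2}$-a.e.\ $x\in\Sigma$ we have $\dim\pi_{T_xS^{n-1}}(\tilde P)=\dim P'$.

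The lines $\pi_{T_xS^{n-1}}(\{p\}\times\R)$, $p\in P'$, are parallel translates through the points of $\pi_{T_xS^{n-1}}(\tilde P)$. Quotienting $T_xS^{n-1}$ by their common direction, and noting that the direction is transverse to $\pi_{T_xS^{n-1}}(\tilde P)$ generically, we obtain a Lipschitz map whose image has dimension $\min\{\dim P',n-2\}$. The reverse slicing inequality (a product-type lower bound: a set containing a $\beta$-dimensional family of parallel lines, each slice carrying $\alpha$-dimensional pieces) then gives
\[
\dim\pi_{T_xS^{n-1}}(Z)\ge \dim P'+\inf_{p\in P'}\dim Z_p-\varepsilon\ge \dim Z-2\varepsilon,
\]
capped at $n-1$. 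Letting $\varepsilon\to0$ along a countable sequence gives the conclusion.

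The main obstacle is this last lower bound. A union of $\alpha$-dimensional subsets of a $\beta$-dimensional family of parallel lines need not have dimension $\alpha+\beta$ in Hausdorff dimension. To handle it I would work with a Frostman measure $\mu$ on $Z$ and disintegrate it along the fibres. Theorem \ref{planemain} should then be used in its quantitative energy form, with a Frostman-type bound on the pushforward of $\mu$ to $P$ that is uniform in $x$. Combining this with the Frostman bounds on the fibre measures (which are preserved, since the fibre maps are bi-Lipschitz with constants depending on $|x_n|$) should give a $(\dim Z-\varepsilon)$-Frostman bound for $(\pi_{T_xS^{n-1}})_\#\mu$ on rectangles, and hence on balls. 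The delicate point is the measurability and uniformity needed to run the disintegration for a.e.\ $x$ simultaneously.
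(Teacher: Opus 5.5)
There is a genuine gap, and it is already in the slicing step. Any scheme built on the line fibres $Z_p$ along the exceptional direction breaks down at that point. You flagged the reverse (Fubini-type) inequality as the delicate step, but the forward claim that Marstrand slicing plus Frostman gives $P'\subset P$ with $\dim P'\ge t$ and $\dim Z_p>\dim Z-\dim P'-\varepsilon$ is false.

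Slicing theorems bound slice dimensions from below only for almost every direction, and only when $\dim Z$ exceeds the dimension of the base. Here $y$ is a fixed, exceptional direction, and $\dim Z\le n-1$ is not larger than the base dimension; no lower bound on $\dim Z_p$ holds.

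Here is a counterexample.
\begin{itemize}
\item Take compact $A,B\subset[0,1]$ with $\dim A=\dim B=0$ and $\dim(A\times B)=1$ (decimal digits forced to vanish on alternating, rapidly growing blocks of positions).
\item Let $n=3$, $y=e_3$ and $Z=\{(a,u,b):a\in A,\ u\in[0,1],\ b\in B\}$.
\item Then $P=A\times[0,1]$ has $\dim P=1=n-2$, so \eqref{le1} holds, and $\dim Z=1+\dim(A\times B)=2$.
\item Yet every fibre $Z_p$ is a copy of $B$ and has dimension $0$.
\end{itemize}
So no $P'$ with your properties exists, and $\dim P'+\inf_{p}\dim Z_p\le 1<2$ for every $P'\subset P$. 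Your Frostman/disintegration repair fails on the same example: any measure on $Z$ has a marginal on $P$ of exponent at most $1$ and fibre measures of exponent $0$.

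The conclusion of the theorem nevertheless holds for this $Z$. Write $\pi_x=\pi_{T_xS^{2}}$. The set $\pi_x(Z)$ is a union of parallel segments of length $|\pi_x(e_2)|$ over a linear projection of $A\times B$. That projection has dimension $1$ for a.e.\ $x$ by Marstrand's theorem, so $\dim\pi_x(Z)=2$.

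Your decomposition misses the following: $\pi_x(\{p\}\times\R)=\pi_x(\{p'\}\times\R)$ whenever $p-p'$ is parallel to $\pi_{y^\perp}(x)$. Hence the fibre of $\pi_x(Z)$ over a point of $T_xS^{n-1}/\R\,\pi_x(y)$ is the projection onto a line of a planar slice $Z\cap(z+\mathrm{span}\{x,y\})$, not a single $\pi_x(Z_p)$. The missing dimension lives in these planar slices.

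A smaller point: Theorem \ref{planemain} applied to $\tilde P$ does not control the quotient of $\pi_x(\tilde P)$ by $\pi_x(y)$. That quotient is $\pi_{\mathrm{span}\{x,y\}^\perp}(P')$, and it needs a separate Mattila-type argument off the set $\Lambda_y$ of Lemma \ref{Lambda}.

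The paper never looks at fibres. It proceeds as follows:
\begin{itemize}
\item It uses \eqref{le1} only to cover $Z$ at scale $\delta$ by $\lesssim\delta^{-s}$ parallel $\delta$-tubes in direction $y$.
\item After removing a neighbourhood of $\Lambda_y$, the line--cone incidence bound (Theorem \ref{lineconen}) shows each tube meets each cone $\calC_z^{\delta}$ in $O_a(1)$ $\delta$-balls.
\item This bounds the number of $z'$ at distance $\sim r$ from $z$ with $f_z^\delta\cap f_{z'}^\delta\neq\emptyset$ by $\lesssim_a\delta^{-\epsilon}r^s\delta^{-s}$.
\item It combines this count with the pairwise overlap bound of Lemma \ref{fg} in a Cauchy--Schwarz argument to obtain Lemma \ref{volume2}, from which Theorem \ref{cormain} follows as in Section \ref{s3}.
\end{itemize}
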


\begin{remark} \label{rmkmain}
  \rm
(i) Notice that for sets $Z$ with $n-2<\dim Z \le n-1$, it is easy to see that $\dim[\pi_{W}(Z)] \ge \dim Z-1$ for all hyperplanes $W$. Since $\dim Z-1 \le n-2$, assumption \eqref{le1} is realisable.

  (ii) If one wants to fully resolve Conjecture \ref{conjl}, it suffices to further consider sets $Z$ with $n-2<\dim Z \le n-1$ such that for some $y \in S^{n-1}$,
     \begin{equation}\label{les}
     \dim[\pi_{T_{y}S^{n-1}}(Z)] \le s \mbox{ for some $s < \dim Z$}.
   \end{equation}
   Otherwise, $\dim[\pi_{T_{x}S^{n-1}}(Z)] =\dim Z$ everywhere and the conjecture holds for $Z$ trivially.
   Thus, if one could relax the condition \eqref{le1} in Theorem \ref{cormain} to \eqref{les}, one resolves Conjecture \ref{conjl}.
\end{remark}

\begin{ex} \label{eg1}
   \rm
   (i) Circles in $S^2$ that are not equators have non-vanishing geodesic curvature. Similarly, $(n-2)$-dimensional spheres that are not equatorial spheres have sectional curvature $>1$.
   In particular, let $\Sigma_c: = S^{n-1} \cap \{x_{n}=c\}$ for $c \in (0,1)\cup(-1,0)$. Indeed, it is straightforward to check that $\Sigma_c$ has geodesic curvature $\pm c/\sqrt{1-c^2} \ne 0$ (depending on the orientation of $\Sigma_c$) when $n=3$ and
   sectional curvature $1/(1-c^2) >1$ when $n \ge 4$.

   (ii) Other examples include small $C^2$-perturbations of circles and spheres in (i), since the curvature condition in Theorem \ref{planemain} is stable under small perturbations.
\end{ex}

\subsection{Background of (Restricted) Projections}
Projection properties was first established by Marstrand \cite{MR0063439} who showed that the orthogonal projection of any Borel set $Z \subset \R^2$ to almost every line has dimension $\min\{1, \dim Z\}$. For all $n \ge 2$ and $1 \le m \le n$, Mattila \cite{MR0409774} showed that the orthogonal projection of any Borel set $Z \subset \R^n$ to almost every $W \in G(n,m)$ has dimension $\min\{m, \dim Z\}$ where $G(n,m)$ is the $m(n-m)$-dimensional Grassmann manifold of all
$m$-dimensional subspaces of $\R^n$ and ``almost every'' refers to the $m(n-m)$-dimensional Hausdorff measure on $G(n,m)$.

Restricted projection focuses on studying orthogonal projection of a set in $\R^n$ to a family of $m$-subspaces which forms a submanifold $\Sigma \subset G(n,m)$. Noting that when $m=1$ or $m=n-1$, $G(n,m)$ can be naturally identified with the unit sphere $S^{n-1} \subset \R^n$. In particular, when $n=3$, $\dim G(3,1) = \dim G(3,2) = 2$, thus the only meaningful submanifold $\Sigma \subset G(3,m)$ ($m=1,2$) to consider in the restricted projection is $1$-dimensional, which is a curve in $S^2$. These cases are relatively well understood by the aforementioned Conjecture \ref{conjfo} and its resolution in \cite{2022arXiv220702259P,gan2024}, together with other previous partial progress \cite{Jarvenpaa2008,FasslerOrponen14,Jarvenpaa2014,Chen2018,MR4148151,
OrponenVenieri2020,Harris2022,Harris2023,marstrandtype}, etc.
For the case $\Sigma \subset G(n,1)$ being a $C^\infty$-curve, certain restricted projection property was established by Gan-Guo-Wang \cite{gan2022restricted}.
In general, when $2 \le m \le n-2$ and $1 \le \dim \Sigma \le \dim G(n,m)$, the study of restricted projection are widely open.

\begin{remark} \label{level} \rm
   We remark that the case $n=3$ in Theorem \ref{planemain} is also a consequence of \cite[Theorem 3.2]{Jarvenpaa2008} (see also \cite[Proposition 1.5]{FasslerOrponen14}) using the sublevel set technique pioneered by Kaufman \cite{Kaufman1968}. One could also run this technique for $n \ge 4$ in Theorem \ref{planemain} (e.g. following Page 8-9 in \cite{FasslerOrponen14}) and it however, only works for sets $Z \subset \R^n$ with $\dim Z \le 1$ instead of $\dim Z \le n-2$ in Theorem \ref{planemain}. The reason is that the proof relies on a sublevel set estimate established in \cite[Lemma 3.3]{Christ1985} for $n=3$. When $n \ge 4$, one could use the corresponding sublevel set estimate \cite[Lemma 3.4]{Christ1985} and obtain that Theorem \ref{planemain} holds for $Z \subset \R^n$ with $\dim Z \le 1$.
\end{remark}

\subsection{Ideas of the Proof of Theorem \ref{planemain}}
Let $Z \subset B(o,1/2) \subset \R^n$ be an analytic set with
$$  t:=\dim Z \in (0,n-2].   $$
By proving the theorem on each local chart of $\Sigma$, we can assume $\Sigma \subset \mathbb{R}^{n+1}$ is a $C^2$ diffeomorphism from $[0,1]^{n-2}$ to $\mathbb{R}^{n}$ and therefore throughout the paper we assume $\Sigma \in C^2([0,1]^{n-2}, \mathbb{R}^{n})$ to be a $C^2$ diffeomorphism.
Thus the differential $\nabla \Sigma(x)$ at $x$ maps $\{\partial_1, \cdots, \partial_{n-2}\}$ (standard basis of $[0,1]^{n-2}$) to a basis
$$  \{\nabla \Sigma(x)\cdot\partial_1, \cdots, \nabla \Sigma(x)\cdot\partial_{n-2}\}$$
of $T_{\Sigma(x)}\Sigma$.
Write
\begin{equation}\label{eix}
  e_i=e_i(x)=M_\Sigma^{-1}\nabla \Sigma(x)\cdot\partial_i, \quad \forall  i =1, \cdots, n-2
\end{equation}
where $M_\Sigma: = \max_{x \in \Sigma}\{\nabla \Sigma(x)\}$
and
\begin{equation}\label{nu}
  P_x:=T_{\Sigma(x)}S^{n-1}=T_{\Sigma(x)}\Sigma \oplus {\rm span} \{\nu\} = {\rm span} \{e_1, \cdots, e_{n-2}, \nu\}
\end{equation}
where $\nu$ is the unit normal of $\Sigma$ in $S^{n-1}$.
Thus, in coordinates, the projection $\pi_{P_x}$ can be written as
$$  \pi_{P_x}(z)=(e_1\cdot z, \cdots, e_{n-2}\cdot z,  \nu\cdot z), \quad z \in \R^n.$$
Indeed, $P_x$ is a distorted orthogonal projection and for any set $Z\subset \R^n$ and $\pi_{P_x}(Z)$ has the same dimension as the orthogonal projection of $Z$ onto $T_{\Sigma(x)}S^{n-1}$.

If $z \in B(o,1/2)$ is fixed, then the map
\begin{equation}\label{fw}
  [0,1]^{n-2} \ni x \mapsto f_z(x):=(e_1(x)\cdot z, \cdots, e_{n-2}(x)\cdot z,  \nu(x)\cdot z) \in [0,1]^{n-1}
\end{equation}
maps $[0,1]^{n-2}$ into $[0,1]^{n-1}$.
The graph $f_z$ of this map is an $(n-2)$-dimension manifold in $[0,1]^{2n-3}$:
$$ \mbox{graph}(f_z)(x):=(x_1, \cdots, x_{n-2},e_1(x)\cdot z, \cdots, e_{n-2}(x)\cdot z,  \nu(x)\cdot z) \quad x \in [0,1]^{n-2}.$$
For instance, if $n=3$, $\mbox{graph}(f_z)$ is a curve in $[0,1]^3$ for each $z \in Z$.
Observing that
$$ \bigcup_{x \in [0,1]^{n-2}}\pi_{P_x}(Z) = \bigcup_{x \in [0,1]^{n-2}}\bigcup_{z \in Z }\pi_{P_x}(z) = \bigcup_{z \in Z }\bigcup_{x \in [0,1]^{n-2}}(x,\pi_{P_x}(z))=\bigcup_{z \in Z}\mbox{graph}(f_z), $$
we see that the study of restricted projection can be transferred to the study of the union of graphs of $\{f_z\}_{z \in Z}$.

To be precise,
in order to show Theorem \ref{planemain}, fix $0<s<s' \le \dim Z \le n-2$ and choose any subset $X \subset [0,1]^{n-2}$ with $\calH^{s'}(X)>0$.
Then the fact that Theorem \ref{planemain} holds is equivalent to that
\begin{equation}\label{heu1}
  \dim (\pi_{P_x}(Z)) = \dim (\bigcup_{z \in Z} [(\{x\} \times \R^{n-1}) \cap \mbox{graph}(f_z)]) \ge s, \quad \mathcal{H}^{s'}\mbox{-a.e. } x \in X.
\end{equation}
For any $z \in Z$, let $X_z \subset X$ with $\calH^{s'}(X_z) \sim \calH^{s'}(X)$ and consider the set
\begin{equation}\label{heu3}
    \calE:  = \bigcup_{z \in Z} [(X_z \times [0,1]^{n-1}) \cap \mbox{graph}(f_z)]= \bigcup_{z \in Z} \bigcup_{x \in X_z}(x,f_z(x)) \subset [0,1]^{2n-3}.
\end{equation}
We will further verify \eqref{heu1} by showing that $\calE$ has dimension lower bound
\begin{equation}\label{heu2}
  \dim \calE \ge s' +s.
\end{equation}
This dimension estimate will be done by a $\dz$-discretised proof on the following estimate:
\begin{thm}\label{thmconfig}
For $0<s \le s' \le 1$ and $\epsilon>0$, there exist $\delta_{0}=\delta_{0}(\epsilon,s,s',n) \in (0,\tfrac{1}{2}]$ such that the following holds for all $\delta \in (0,\delta_{0}]$:
$$|\mathcal{E}|_{\delta} \geq \delta^{16\epsilon - s- s'}$$
where $|\cdot|_\delta$ stands for the $\delta$-covering number of a set.
\end{thm}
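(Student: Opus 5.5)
We will argue by contradiction, in the spirit of the line–cone incidence approach used for Theorem \ref{2025}. Suppose $|\calE|_\delta < \delta^{16\epsilon-s-s'}$ for arbitrarily small $\delta$.

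\textbf{Step 1: discretisation.} By Frostman's lemma we pick a $(\delta,s)$-set $\calZ$ of $\delta$-separated points in $Z$, with $|\calZ|\approx\delta^{-s}$. We likewise pick a $(\delta,s')$-set $\mathcal{X}$ of $\delta$-cubes in $X$, with $|\mathcal{X}|\approx \delta^{-s'}$. For each $z\in\calZ$ the subset $\mathcal{X}_z$ inherits the $(\delta,s')$-property, with $|\mathcal{X}_z|\gtrsim \delta^{-s'+\epsilon}$. Since each $f_z$ is Lipschitz, the $\delta$-neighbourhood of $\calE$ contains the union over $z$ of the discretised graph pieces $\{(x,f_z(x)) : x\in \mathcal{X}_z\}$. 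Each such piece is a $(\delta,s')$-set of cardinality about $\delta^{-s'}$.

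\textbf{Step 2: reduction to an incidence count.} A pigeonholing argument shows that if $|\calE|_\delta$ is small, then many pairs $z\ne z'$ have graph pieces meeting in many common $\delta$-cubes. Concretely, we get $\sum_{z\neq z'} \#\{x\in \mathcal{X}_z\cap\mathcal{X}_{z'} : |f_z(x)-f_{z'}(x)|\le\delta\}\gtrsim \delta^{-2s-2s'+s+s'+16\epsilon}$ by Cauchy–Schwarz.

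The key geometric input is that $f_z(x)-f_{z'}(x) = \pi_{P_x}(z-z')$. So the condition $|f_z(x)-f_{z'}(x)|\le\delta$ says $|\pi_{P_x}(z-z')|\le \delta$. Writing $w=z-z'$ with $|w|=r$, this forces $|w\cdot e_i(x)|,|w\cdot\nu(x)|\le\delta$. Hence $w/|w|$ lies within $\delta/r$ of $\pm\Sigma(x)$, since $P_x^\perp={\rm span}\{\Sigma(x)\}$. For fixed $w$, the set of such $x\in[0,1]^{n-2}$ is therefore contained in a ball of radius $\lesssim\delta/r$. Here we use that $\Sigma$ is a diffeomorphism onto its image and that $-\Sigma$ meets the $\delta/r$-cap at only a bounded number of places.

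\textbf{Step 3: counting with the Frostman conditions.} For each $z$ and each dyadic $r$, the number of $z'$ with $|z-z'|\sim r$ and $(z-z')/r$ within $\delta/r$ of $\Sigma$ can be bounded. Each such $z'$ contributes at most $\lesssim (\delta/r)^{s'}\delta^{-s'} = r^{-s'}$ common cubes $x$, using the $(\delta,s')$-property of $\mathcal{X}$. The number of such $z'$ is at most $\lesssim r^{s}\delta^{-s}$. Summing over dyadic $r\ge\delta$ gives the total incidence bound
\[
\sum_{z}\sum_r r^{s-s'}\delta^{-s} \lesssim \delta^{-2s}\log(1/\delta)\cdot\max(1,\delta^{s-s'}),
\]
to be compared with the required lower bound $\delta^{-s-s'+16\epsilon}$. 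When $s\le s'$, the worst case $r=\delta$ gives $\delta^{-s-s'}$ per $z$-sum; this is too crude, since it is the trivial bound.

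\textbf{Main obstacle.} The main difficulty is therefore the small scales $r$. Here we would use that the relevant $z'$ are confined to a $\delta$-neighbourhood of the cone $z+\R\Sigma$. We then need a nontrivial bound on how many points of an $s$-dimensional set lie near such a cone, jointly over $z$. This is a line–cone incidence problem: the pairs $(z,z')$ define lines, and $\Sigma$ generates a curved cone. The curvature hypotheses (non-vanishing geodesic curvature for $n=3$, sectional curvature $>1$ for $n\ge4$) enter precisely here, through the cone incidence estimate from \cite{LIU2025} or \cite{2022arXiv220702259P}. Applying that estimate with $s\le s'\le 1$ and absorbing the logarithmic and $\epsilon$-losses should yield $|\calE|_\delta\ge\delta^{16\epsilon-s-s'}$, giving the contradiction.
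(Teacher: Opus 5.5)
\textbf{The gap: an exponent error in Step 2 leads you to discard a complete argument.} Suppose $|\calE|_\delta<\delta^{16\epsilon-s-s'}$, and let $N\approx\delta^{-s-s'}$ (up to $\delta^{O(\epsilon)}$ losses from Step 1) be the number of discretised graph points. Cauchy--Schwarz then says the number of pairs of such points sharing a $\delta$-cube is $\gtrsim N^2/|\calE|_\delta\gtrsim\delta^{-s-s'-16\epsilon+O(\epsilon)}$. It is not $\delta^{-s-s'+16\epsilon}$, as you wrote. The diagonal pairs ($z=z'$) number only $\lesssim N\lesssim\delta^{-s-s'}$. So the off-diagonal count must itself be $\gtrsim\delta^{-s-s'-16\epsilon+O(\epsilon)}$.

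Your Step 3 bound is $\delta^{-O(\epsilon)}\sum_{z}\sum_{r} r^{s}\delta^{-s}\cdot r^{-s'}\lesssim\delta^{-s-s'-O(\epsilon)}\log(1/\delta)$, using $s\le s'$. This already contradicts the lower bound, once the Step 1 losses are kept below $16\epsilon$ and $\delta$ is small. It does not matter that the scales $r\sim\delta$ contribute as much as the diagonal. An $L^2$ argument only needs the off-diagonal count to be at most $\delta^{-O(\epsilon)}$ times the diagonal one.

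So your ``main obstacle'' does not exist. The closing appeal to a line--cone incidence estimate from \cite{LIU2025} or \cite{2022arXiv220702259P} is unnecessary, and as written it is not an argument. You do not say which estimate is used, nor how it would beat the Frostman count for a general pair of $(\delta,s)$- and $(\delta,s')$-sets. As submitted, the proposal ends with ``should yield'' and so stops short of a proof.

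\textbf{Relation to the paper.} Correct the sign and stop after Step 3; your argument is then the paper's proof of Theorem \ref{deltaconfig}. It uses the same ingredients: the Cauchy--Schwarz bound \eqref{cs}, the dyadic shells $F_i(f)$ with $|F_i(f)|\le 2^{-is}|F|$, the localisation of Lemmas \ref{fg} and \ref{shape}, and $\sum_i 2^{(s'-s)i}\le\log(1/\delta)\,\delta^{s-s'}$. In the paper, a line--cone estimate (Theorem \ref{lineconen}) enters only in Theorem \ref{cormain}. There $\dim Z>n-2$ makes the crude pair count too large, and hypothesis \eqref{le1} is needed.

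Where you genuinely differ is the localisation step. The paper shows that $\{x:|f_z(x)-f_{z'}(x)|<2\delta\}$ lies in a ball of radius $\lesssim\delta/\|f_z-f_{z'}\|_{C^2}$ via the cinematic framework (Definition \ref{cine}, Corollary \ref{uniq}, Lemma \ref{fg}). Checking that $\{f_z\}$ is cinematic and bi-Lipschitz to $Z$ (Lemma \ref{cineprop}) then uses the curvature of the dual manifold (Lemma \ref{dual}). You instead use $|\pi_{P_x}(w)|=|w|\sin\angle(w,\R\Sigma(x))$ together with the bi-Lipschitz property of the embedding $\Sigma$. This gives the same localisation, in at most two balls, directly in terms of $|z-z'|$. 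The Frostman property of $Z$ is therefore used without passing to $C^2$-norms, and no curvature hypothesis is needed for this statement. What the paper's route buys in exchange is a pairwise-intersection lemma valid for arbitrary cinematic families.
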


We briefly explain how to show Theorem \ref{thmconfig}. Given $\eta>0$ and a subdomain $D \subset [0,1]^{m}$, the vertical $\eta$-neighborhood of the graph of $f:D \to \R^l$ is
\begin{equation}\label{vertical}
  f^{\eta}(D):= \{(x,y) \in D \times \R^{l} : |(x,y)-(x,f(x))|= |y-f(x)|< \eta \} \subset \R^{m+l} .
\end{equation}
Noting that $\calE$ is a union of subsets contained in graphs of $\{f_z\}_{z \in Z}$. To get the desired lower bound,
the core of the proof of Theorem \ref{thmconfig} is to upper bound the volume of the intersection $f^\delta_z \cap f^\delta_{z'}$ of two $\delta$-vertical neighbourhoods in $[0,1]^{2n-3}$.
We have the following estimate.

\begin{lemma} \label{pairvolume2}
We have
  $$ \calL^{2n-3} (f^\delta_z([0,1]^{n-2}) \cap f^\delta_{z'}([0,1]^{n-2})) \lesssim \frac{\delta^{2n-3}}{\|f_z-f_{z'}\|_{C^2([0,1]^{n-2})}^{n-2}} $$
  where $\calL^{d}$ is the $d$-dimensional Lebsegue measure.
\end{lemma}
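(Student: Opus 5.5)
Write $w=z-z'$ and $g:=f_z-f_{z'}=f_w$; the map $z\mapsto f_z$ is linear, since each component of $f_z(x)$ is $e_i(x)\cdot z$ or $\nu(x)\cdot z$. By Fubini over $x\in[0,1]^{n-2}$, the fibre of $f^\delta_z\cap f^\delta_{z'}$ over $x$ is empty unless $|g(x)|<2\delta$. When it is nonempty, it is the intersection of two $\delta$-balls in $\R^{n-1}$, so it has measure $\lesssim\delta^{n-1}$. Hence
$$\calL^{2n-3}(f^\delta_z\cap f^\delta_{z'})\lesssim \delta^{n-1}\,\calL^{n-2}\{x\in[0,1]^{n-2}: |f_w(x)|<2\delta\}.$$
It therefore suffices to prove the sublevel set estimate $\calL^{n-2}\{|f_w|<2\delta\}\lesssim \delta^{n-2}/\|f_w\|_{C^2}^{n-2}$. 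Since $\Sigma\in C^2$, we have $\|f_w\|_{C^2}\lesssim|w|$, so it is enough to show $\calL^{n-2}\{|f_w|<\delta\}\lesssim(\delta/|w|)^{n-2}$. By homogeneity we may normalize $|w|=1$ and $w=u\in S^{n-1}$, replacing $\delta$ by $\delta/|w|$. The bound is trivial if $\delta\gtrsim|w|$, so we may assume $\delta/|w|$ is small.

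The key geometric step is to show that the sublevel set is contained in a small ball. Note that $|f_u(x)|\sim|\pi_{P_x}(u)|$, because $\{e_i,\nu\}$ is a uniformly nondegenerate frame of $P_x$. Also $|\pi_{P_x}(u)|=\sqrt{1-(u\cdot\Sigma(x))^2}$, which is comparable to $\operatorname{dist}(\Sigma(x),\{\pm u\})$. So $\{|f_u|<\delta\}$ lies in the $\Sigma$-preimage of $B(\pm u,C\delta)$. Since $\Sigma$ is a $C^2$ diffeomorphism with bi-Lipschitz constants controlled, this preimage is contained in at most two balls of radius $\lesssim\delta$ in $[0,1]^{n-2}$. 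Its measure is therefore $\lesssim\delta^{n-2}$, which is exactly the required bound.

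Multiplying gives $\delta^{n-1}\cdot\delta^{n-2}/|w|^{n-2}\lesssim\delta^{2n-3}/\|f_z-f_{z'}\|_{C^2}^{n-2}$. The step needing care is the comparability $|f_u(x)|\gtrsim|\pi_{P_x}u|$ uniformly in $x$. Its key input is a lower bound on the smallest singular value of the matrix with rows $e_1,\dots,e_{n-2},\nu$ restricted to $P_x$; this follows from $\nabla\Sigma$ having full rank on the compact parameter domain. The comparability with $\operatorname{dist}(\Sigma(x),\pm u)$ also needs justification: $P_x$ is the orthogonal complement of $\Sigma(x)$, so $|\pi_{P_x}u|^2=1-(u\cdot\Sigma(x))^2$.

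Curvature plays no role in this plan, which makes me suspect the paper's intended norm or estimate is sharper, perhaps via Taylor expansion in the $C^2$ transversality. The plain argument above nonetheless suffices for the stated inequality.
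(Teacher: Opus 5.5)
Your proof is correct, and it takes a genuinely different, more elementary route than the paper.

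The paper obtains this lemma by combining two general results:
\begin{itemize}
\item Lemma \ref{fg} shows, for an arbitrary cinematic family, that over each small dyadic cube the $x$-projection of $f^\delta\cap g^\delta$ lies in a ball of radius $16K^2\delta/d$. It does this through the dichotomy of Corollary \ref{uniq}: either $|h|\gtrsim d$ on the whole cube, or $|\nabla_\xi h|\gtrsim d$ in every direction, so that $|h(x)|+|h(y)|\gtrsim d|x-y|$.
\item Lemma \ref{cineprop} checks the axioms of Definition \ref{cine} for $\{f_z\}$. It expands $z-z'$ in the frame $\{\Sigma(x),\tilde e_1,\dots,\tilde e_{n-2},\nu\}$ and, when the $\Sigma(x)$-component dominates, bounds a derivative of $h$ from below. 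This is where curvature (of $\Sigma$ and of its dual $\Sigma^\ast$) enters the paper's proof.
\end{itemize}

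You instead use the linear structure $f_w(x)=A(x)w$ with $\ker A(x)=\mathrm{span}\{\Sigma(x)\}$. This identifies the sublevel set globally as the $\Sigma$-preimage of an $O(\delta/|w|)$-neighbourhood of $\{\pm w/|w|\}$. It needs no derivative information on $h$, only that $\Sigma$ is a bi-Lipschitz embedding.

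The paper's route buys generality: Lemma \ref{fg} holds for any cinematic family, not only families depending linearly on the parameter. Yours buys brevity, and it shows that the curvature hypothesis is not needed for this estimate.

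Your suspicion that the paper proves something sharper is unfounded. The paper proves exactly this bound, and Lemma \ref{cineprop} gives $\|f_z-f_{z'}\|_{C^2}\sim|z-z'|$. Moreover, your localisation (at most two balls of radius $\lesssim\delta/|z-z'|$) is precisely the structural fact \eqref{ball} that Lemma \ref{shape} relies on later. Your bound also comes directly with $|z-z'|^{n-2}$ in the denominator, which is the form actually used in Section \ref{s5}.

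One caveat, shared with the paper's \eqref{cine1}: since $e_i$ and $\nu$ already involve $\nabla\Sigma$, the inequality $\|f_w\|_{C^2}\lesssim|w|$ tacitly requires one more derivative of $\Sigma$ than $C^2$.
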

In fact, we will show the estimate holds for a more general family of maps and also show Theorem \ref{thmconfig} holds in a more general setting. See Lemma \ref{fg} in Section \ref{s2} and Theorem in Section \ref{s3} respectively.

The estimate in Lemma \ref{pairvolume2} shows that the intersection looks like the $\delta$-neighbourhood of the intersection of two $(n-2)$-dimensional planes intersecting at a point (e.g. span$\{x_1,x_2\}$  and span$\{x_3,x_4\}$ in $\R^5$ when $n=3$).
The proof relies on the fact that the image
\begin{equation}\label{szast}
  \Sigma^\ast := \nu([0,1]^{n-2}) \subset S^{n-1}
\end{equation}
of the normal vector field $\nu:[0,1]^{n-2} \to S^{n-1}$
is a $C^2$-manifold with non-vanishing geodesic curvature if $n=3$ or sectional curvature $>1$ if $n \ge 4$, which is a consequence of $\Sigma$ having non-vanishing geodesic curvature/sectional curvature $>1$ (see Lemma \ref{dual} in Section \ref{s2}). We will call $\Sigma^\ast$ the dual manifold of $\Sigma$.

This completes the outline of the proof of Theorem \ref{planemain}.

\subsection{Ideas of the Proof of Theorem \ref{cormain}}
Let $Z \subset B(o,1/2) \subset \R^n$ be an analytic set with
$$  t:=\dim Z \in (n-2,n-1].   $$
By the same argument as in the previous section and under the same notation, recalling \eqref{heu1} - \eqref{heu2},
the fact that Theorem \ref{cormain} holds is equivalent to that
\begin{equation}\label{heu11}
  \dim (\pi_{P_x}(Z)) = \dim (\bigcup_{z \in Z} [(\{x\} \times \R^{n-1}) \cap \mbox{graph}(f_z)])  =t, \ \mathcal{H}^{n-2}\mbox{-a.e. } x \in [0,1]^{n-2}.
\end{equation}
Again, for each $z \in Z$, let $X_z \subset [0,1]^{n-2}$ with $\calL^{n-2}(X_z) \sim 1$ and consider the set
\begin{equation}\label{heu31}
    \mathcal{Z}: = \bigcup_{z \in Z} [(X_z \times [0,1]^{n-1}) \cap \mbox{graph}(f_z)] = \bigcup_{z \in Z} \bigcup_{x \in X_z}(x,f_z(x)) \subset [0,1]^{2n-3}.
\end{equation}
and \eqref{heu11} will be verified by showing that $\calE$ has dimension
\begin{equation}\label{heu21}
  \dim \mathcal{Z} = n-2 + \dim Z = n-2 + t,
\end{equation}
which will be further guaranteed by the following lemma.
\begin{lemma}\label{volume2}
For any $\epsilon>0$, there exists $\delta_{0}=\delta_{0}(\epsilon) \in (0,\tfrac{1}{2}]$ such that the following holds for all $\delta \in (0,\delta_{0}]$:
$$|\mathcal{Z}|_{\delta} \gtrsim \delta^{3\epsilon - (n-2 +t)}.$$
\end{lemma}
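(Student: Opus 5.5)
We plan to prove Lemma \ref{volume2} by an $L^2$ (Cauchy--Schwarz) incidence argument. The pairwise volume estimate of Lemma \ref{pairvolume2} (in the general form of Lemma \ref{fg}) supplies the overlap control, and the hypothesis \eqref{le1} supplies the needed non-concentration.

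\textbf{Step 1: discretise.} By Frostman's lemma we take a $t$-Frostman measure $\mu$ on $Z$, and we choose a maximal $\delta$-separated set $\{z_j\}_{j=1}^N \subset Z$ with $N \approx \delta^{\epsilon-t}$. For each $j$ we let $T_j := f^\delta_{z_j}(X_{z_j})$, with $\calL^{2n-3}(T_j) \sim \delta^{n-1}$ because $\calL^{n-2}(X_{z_j})\sim 1$. A cover of $\calZ$ by $\delta$-balls induces a cover of $\bigcup_j T_j$ by comparable balls. Cauchy--Schwarz then gives
\begin{equation*}
  |\calZ|_\delta \, \delta^{2n-3} \gtrsim \calL^{2n-3}\Big(\bigcup_j T_j\Big) \ge \frac{\big(\sum_j \calL^{2n-3}(T_j)\big)^2}{\sum_{j,k} \calL^{2n-3}(T_j\cap T_k)}.
\end{equation*}
The numerator is $\sim N^2\delta^{2n-2}$, so it suffices to show that $\sum_{j,k}\calL^{2n-3}(T_j\cap T_k) \lesssim \delta^{-\epsilon} N \delta^{n-1}$. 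Inserting this bound yields $|\calZ|_\delta \gtrsim \delta^{2\epsilon} N\delta^{-(n-2)} \gtrsim \delta^{3\epsilon-(n-2+t)}$.

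\textbf{Step 2: pair estimate.} Since $f_z$ is linear in $z$, we have $\|f_{z_j}-f_{z_k}\|_{C^2} \sim |z_j-z_k|$. This uses that $\{e_1,\dots,e_{n-2},\nu\}$ together with their derivatives span $\R^n$ uniformly, which is where the curvature of $\Sigma$ and of the dual manifold $\Sigma^\ast$ (Lemma \ref{dual}) enters. Lemma \ref{pairvolume2} then bounds each overlap by $\delta^{2n-3}|z_j-z_k|^{-(n-2)}$, and trivially by $\delta^{n-1}$. Summing over dyadic shells $|z_j-z_k|\sim r$, we get
\begin{equation*}
  \sum_{k}\calL^{2n-3}(T_j\cap T_k) \lesssim \sum_{\delta\le r\le 1} \#\{k: |z_j-z_k|\sim r\}\,\delta^{n-1}\min\{1,(\delta/r)^{n-2}\}.
\end{equation*}
The $t$-Frostman condition gives $\#\{k:|z_j-z_k|\le r\}\lesssim \delta^{-\epsilon}(r/\delta)^t$. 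The shell sum is therefore $\delta^{n-1}\sum_r (r/\delta)^{t-(n-2)}$. Since $t>n-2$, this diverges like $\delta^{n-2-t}$ and fails. This is exactly where \eqref{le1} must be used.

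\textbf{Step 3: use \eqref{le1}, the main obstacle.} The overlap of $T_j$ and $T_k$ is large only when $z_j - z_k$ is nearly parallel to a bad direction. Lemma \ref{pairvolume2} is crude: the intersection is a $\delta$-neighbourhood of a point-like set, and it is worse only in the directions where $f_{z}-f_{z'}$ nearly degenerates. We would refine it to a bound of the form $\delta^{2n-3}/(|z_j-z_k|\,\rho(z_j-z_k))^{n-2}$, where $\rho$ measures the angular distance from a degenerate line.

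We then decompose $Z$ by fibres of $\pi_{T_yS^{n-1}}$, which are lines in the direction $y$. By \eqref{le1} and Marstrand slicing, typical fibres carry dimension $\ge t-(n-2)\le 1$, while the projected base has dimension $\le n-2$. Pairs in different fibres are separated transversally in the $(n-1)$-dimensional base, and for them the shell sum uses the base dimension $\le n-2$, which gives only a logarithmic loss. Pairs in the same fibre differ by a multiple of $y$. For these, $f_{z}-f_{z'} = \lambda f_y$ with $\|f_y\|_{C^2}\sim 1$ and $\lambda\sim r$, and the one-dimensional fibre counting $\#\lesssim (r/\delta)^{t-(n-2)}$ with exponent $\le 1\le n-2$ again makes the sum converge up to $\delta^{-\epsilon}$.

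The delicate part is making this fibre/base splitting quantitative at scale $\delta$. We would do it with pigeonholing, using a Frostman measure on $\pi_{T_yS^{n-1}}(Z)$ of exponent $\le n-2$ and conditional fibre measures of exponent $t-\dim\pi_{T_yS^{n-1}}(Z)$. The remaining losses would be absorbed into $\delta^{-\epsilon}$ and into the shrinking of the sets $X_z$.
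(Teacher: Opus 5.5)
Your Steps~1 and~2 match the paper: Cauchy--Schwarz over the vertical neighbourhoods, the pair bound $\delta^{2n-3}|z-z'|^{-(n-2)}$ from Lemmas~\ref{fg} and~\ref{cineprop}, and the observation that Frostman counting alone diverges when $t>n-2$. The gap is in Step~3, which is where the content of the lemma lies.

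First, the volume bound has no degenerate direction to refine. By Lemma~\ref{cineprop} the family $\{f_z\}$ is cinematic with a uniform constant, so $\|f_z-f_{z'}\|_{C^2}\sim|z-z'|$ in every direction. A bound $\delta^{2n-3}/(|z-z'|\rho)^{n-2}$ with $\rho\le 1$ is therefore weaker than what you already have. The gain has to come from a \emph{support} condition instead. The inequality $|f_z(x)-f_{z'}(x)|<2\delta$ says $|\pi_{P_x}(z-z')|<2\delta$, i.e.\ $z'-z$ lies within $2\delta$ of the line $\R\Sigma(x)$. Hence $T_j\cap T_k\neq\emptyset$ forces $z_k\in\calC_{z_j}^{2\delta}$ (Lemma~\ref{pairs}). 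The bad set is the $(n-1)$-dimensional cone with apex $z_j$ and cross-section $\Sigma$, not a line.

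Second, your count for pairs in different fibres does not close. The number of $z'$ with $|z-z'|\sim r$ is (number of fibres met) $\times$ (points per fibre). The second factor is typically $(r/\delta)^{t-s}$ and can be as large as $r/\delta$, so you are back to $(r/\delta)^{t}$ and the same divergence as in Step~2. The base dimension $s\le n-2$ helps only if each fibre contributes $O(1)$ interacting points.

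That $O(1)$-per-fibre bound is the missing idea. The paper supplies it with a line--cone incidence estimate (Theorem~\ref{lineconen}): if a line $L$ makes angle $\ge a$ with every tangent space of $\calC_z$, then $\calL^n(\calC_z^\delta\cap L^\delta)\lesssim_a\delta^n$. Consequently a $\delta$-separated set has only $O_a(1)$ points in $\calC_z^{n\delta}\cap L_{w,y}^{n\delta}$. The proof uses the curvature of $\Sigma$: a line that is not a generatrix meets the cone in at most two points (Lemma~\ref{twopt}).

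The estimate also needs uniform transversality of $y$ to the cone. The paper arranges this by discarding a neighbourhood of $\Lambda_y=\{x\in\Sigma:\ y\in T_x\calC_o\}$. Lemma~\ref{Lambda} shows $\dim\Lambda_y\le n-3$, because no open piece of the dual manifold $\Sigma^\ast$ lies in the equator $y^\perp\cap S^{n-1}$, so this is harmless for an $\calH^{n-2}$-a.e.\ statement. Without this step the count genuinely fails. A tube whose direction is tangent to $\calC_z$ can meet $\calC_z^{n\delta}$ along a segment much longer than $\delta$ (along a whole generatrix if $y\in\pm\Sigma$), and then it carries many interacting points.

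With transversality in place, cover $Y$ by $\lesssim\delta^{-s}$ parallel tubes $L^{n\delta}_{w,y}$, with $w$ in a $(\delta,s)$-set $W\subset\pi_{T_yS^{n-1}}(Z)$. This gives $|Y_{z,r}|\lesssim_a\delta^{-\epsilon}r^s\delta^{-s}$. The shell sum $\sum_r r^{s-(n-2)}\delta^{2n-3-s}\lesssim\log(1/\delta)\,\delta^{n-1}$ then closes because $s\le n-2$. No slicing or conditional fibre measures are needed, and same-fibre pairs are handled by the same incidence bound.
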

We give the heuristic idea to show Lemma \ref{volume2}. We choose a $\delta$-separated subset of $Z$ (still denoted by $Z$) with cardinality $\sim \delta^{-t}$ and in the following of this section $Z$ will always refers to this discrete set .
It suffices to show
$$  \calL^{2n-3}(\bigcup_{z \in Z}f_z^\delta([0,1]^{n-2})) \gtrsim    \delta^{O(\epsilon) + 2n-3-(t+ n-2)} = \delta^{O(\epsilon) + n-1-t}.$$
However, only the volume estimate in Lemma \ref{pairvolume2} is not enough for this lower bound. We also need to control the total number of pairs $(z,z') \in Z\times Z$ such that $f_z^\delta([0,1]^{n-2}) \cap f_{z'}^\delta([0,1]^{n-2}) \ne \emptyset$. This is because when $t > n-2$, the total number of pairs in $Z\times Z$ is $\delta^{-2t} > \delta^{-2(n-2)}$ which is large.
Hence,
we will obtain Lemma \ref{pairvolume2} by establishing a line-cone incidence estimate together with the help of the assumption \eqref{le1} in Theorem \ref{cormain}. We sketch the ideas below.

Note that $f_z = f_{z'}$ at $x$, i.e. $f_z(x)=f_{z'}(x)$ is equivalent to that
$$  \pi_{P_x}(z) = \pi_{P_x}(z'),$$
which, together with $P_x^\perp = (T_{\Sigma(x)}S^{n-1})^\perp= $span$\{\Sigma(x)\}$ implies that
$$  z' = z + u_z \Sigma(x) \quad \mbox{ for some $u_z \ne 0$},$$
since all points in $\R^n$ which have same image as $z$ under $\pi_{P_x}$ are exactly the line $z + \R \Sigma(x)$.
This shows that for a fixed point $z \in Z$, all other points $z' \in Z \subset B(o,1/2)$ such that $f_z \cap f_{z'} \ne \emptyset$ are contained in
$$  \calC_z:=  \bigcup_{x \in [0,1]^{n-2}, r \in [-1,1] } z + r \Sigma(x)= : \bigcup_{x \in [0,1]^{n-2}} L_{z,x},  $$
which is a cone with apex $z$ and cross-section $\Sigma$.
If $\Sigma$ is the $(n-2)$-sphere $\Sigma_c$ in Example \ref{eg1} (i), $\calC_z$ is a circular cone in $\R^n$ with opening angle $\arccos c$.
We call the line segments $\{L_{z,x}\}_{x\in [0,1]^{n-2}}$ generatrices of $\calC_z$. Obviously, $\calC_z$ is the union of all its generatrices.
We conclude that
\begin{equation*}
  f_z \cap f_{z'} \ne \emptyset \quad \Longleftrightarrow \quad z' \in \calC_z.
\end{equation*}
Thus in the $\delta$-discretised version, we will prove the following relation: for any $z \in Z$,
\begin{equation}\label{linecone0}
  |\{z' \in Z : f_z^\delta([0,1]^{n-2}) \cap f_{z'}^\delta([0,1]^{n-2}) \ne \emptyset \}| \sim |\{z' \in Z : z' \in \calC_z^\delta \}|.
\end{equation}
where $A^\delta$ be the $\delta$-neighbourhood in $\R^n$ for any subset $A \subset \R^n$.
We have the following theorem on the incidence of a line and the cone $\calC_x$. Recall that the angle of a line and a subspace in $\R^n$ is the angle of the line and its orthogonal projection to that subspace.
\begin{thm} \label{lineconen}
Let $\Sigma$ be as in Theorem \ref{cormain}.
If a line $L \subset \R^n$ satisfies $\min_{p \in \calC_z}\{\angle(L,T_p\calC_z)\}  \ge a$ for some $a \gg \delta >0$, then
  $$\calL^{n}(\calC_z^\delta \cap L^\delta) \lesssim_a \delta^n. $$
  where $\angle(L,T_p\calC_z)$ is the Euclidean angle between $L$ and the tangent space $T_p\calC_z$.
\end{thm}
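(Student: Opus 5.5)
The plan is to reduce the estimate to a one-dimensional transversality statement along $L$, and then integrate over the $\delta$-thickening of $L$ in the normal directions. Parametrise $L=\{p_0+\tau v:\tau\in\R\}$ with $|v|=1$, and write points of $L^\delta$ as $p_0+\tau v+w$ with $w\perp v$, $|w|<\delta$. By Fubini,
\begin{equation*}
\calL^n(\calC_z^\delta\cap L^\delta)=\int_{|w|<\delta}\calH^1\bigl(\{\tau\in[-2,2]: p_0+w+\tau v\in\calC_z^\delta\}\bigr)\,dw .
\end{equation*}
The $w$-integral runs over an $(n-1)$-dimensional ball of radius $\delta$, with volume $\sim\delta^{n-1}$. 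It therefore suffices to show that every line $L_w=L+w$ meets $\calC_z^\delta$ in a set of length $\lesssim_a\delta$. Only the bounded piece of $L$ with $|\tau|\le 2$ matters, since $\calC_z\subset B(z,1)$.

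For the one-dimensional estimate I would use a defining function. Away from the apex, $\calC_z$ is locally a $C^2$ hypersurface: the map $(x,r)\mapsto z+r\Sigma(x)$ is an immersion for $r\ne0$, because $\Sigma(x)$ is transverse to $T_{\Sigma(x)}\Sigma$. So near each such point $p$ we can write $\calC_z^{\delta}$ as $\{|\rho|\lesssim\delta\}$, where $\rho$ is a signed distance function with $|\nabla\rho|\sim1$. The angle hypothesis $\angle(L,T_p\calC_z)\ge a$ gives $|v\cdot\nabla\rho(p)|\gtrsim a$. The same bound persists for $L_w$ and for points within $\delta\ll a$ of the cone, by continuity of $\nabla\rho$ (here we use $C^2$ bounds on $\Sigma$ and compactness). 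Hence $\tau\mapsto\rho(p_0+w+\tau v)$ is strictly monotone with derivative $\gtrsim a$ on each component of the sublevel set. Each component of $\{\tau:|\rho|\lesssim\delta\}$ therefore has length $\lesssim\delta/a$.

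It remains to bound the number of components, that is, how many times $L_w$ can cross the cone. Here I would use the curvature assumption, through the structure of $\calC_z$ as a cone over $\Sigma$. The crossings of $L_w$ with $\calC_z$ correspond to directions $\Sigma(x)$ lying in the $2$-plane spanned by $v$ and $p_0+w-z$. In the model case $\Sigma=\Sigma_c$ the cone is a quadric, so there are at most $2$ crossings. In general, the intersection of a $2$-plane through the origin with $\Sigma$ is controlled by the geodesic curvature (for $n=3$) or sectional curvature $>1$ (for $n\ge4$); working on a small chart as the paper does, it consists of $O(1)$ points. This gives $\calH^1(\{\tau: p_0+w+\tau v\in\calC_z^\delta\})\lesssim_a\delta$. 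Integrating in $w$ then yields the claimed bound $\delta^n$.

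The main obstacle is the apex $z$, where the cone is not smooth and the tangent-space hypothesis is vacuous or degenerate. I would handle it separately: the portion of $L_w$ inside $B(z,C\delta/a)$ trivially has length $\lesssim\delta/a$. Outside this ball, the scaling invariance of the cone lets one rescale by $|p-z|$ to unit size. The fattening $\calC_z^\delta$ then becomes a $\delta/|p-z|$-fattening, which remains at most a small multiple of $a$, so the transversality argument above still applies. A dyadic decomposition in $|p-z|$ could introduce a logarithm, but since each dyadic shell contributes only $O(1)$ crossings, each of length $\lesssim\delta/a$, and $L_w$ crosses the cone only $O(1)$ times in total, no logarithmic loss should arise.
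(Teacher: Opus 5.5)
Your proposal is correct and follows essentially the paper's argument. You bound the number of components of $L\cap\calC_z^\delta$ by the strict convexity of $\Sigma$ (a great circle meets a small piece of $\Sigma$ at most twice, as in Lemma \ref{twopt} and Corollary \ref{int2}), you bound each component by $\lesssim\delta/a$ using transversality, and you treat the apex separately; the paper writes the cone as a graph with gradient $\le a/10$ (Lemma \ref{smalla}) where you use a signed distance function, and your Fubini over the translates $L+w$ is just a cleaner form of the paper's thickening step. Your monotonicity of $\rho$ along each component also supplies a step the paper leaves implicit when passing from ``at most two intersection points'' to ``at most two components'': away from the apex, it shows that every component actually contains a crossing.
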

The proof of this theorem is geometric and relies on the curvature condition of $\Sigma$ in Theorem \ref{cormain}.
As a consequence of Theorem \ref{lineconen}, recalling that $Z$ is $\delta$-separated, we know that
\begin{equation}\label{less1}
  |\calC_z^\delta \cap L^\delta \cap Z|\lesssim_a 1, \quad \forall z \in Z.
\end{equation}

Finally, under the assumption \eqref{le1}, that is,
\begin{equation}\label{le11}
  s : = \dim[\pi_{T_yS^{n-1}}(Z)] \le n-2 < t,
\end{equation}
we can efficiently upper bound the quantity in \eqref{linecone0} as follows.

Let $Z_y:=\pi_{T_yS^{n-1}}(Z)$. By \eqref{le11}, we can find a maximal $\delta$-separated $(\delta,s)$-set $Z'_y \subset Z_y$ with $|Z'_y| \sim \delta^{-s}$ (c.f. Definition \ref{dst} in Section \ref{s3} for $(\delta,s)$-set). For each $w \in Z'_y$, the preimage set
$$W_w:=\pi_{T_yS^{n-1}}^{-1}(\{w\}) \cap Z$$
  is contained in the line $L_{w,y} = w+ \R y$.
Since $\{w+ \R y\}_{w \in Z'_y}$ are parallel, we deduce that
\begin{equation}
  Z \subset \bigcup_{w \in Z'_y} W_w^\delta \subset \bigcup_{w \in Z'_y} L_{w,y}^\delta.
\end{equation}
Roughly speaking, we are able to use condition \eqref{le11} to partition $Z$ into subsets contained $\{W_w^\delta\}_{w \in Z'_y}$ which further contained in an $s$-family of parallel $\delta$-tubes $\{L_{w,y}^\delta\}_{w \in Z'_y}$. This, combining with \eqref{less1} implies that
$$ |\calC_z^\delta \cap Z| \le \sum_{w \in Z'_y} |\calC_z^\delta \cap L_{w,y}^\delta \cap Z| \lesssim_a |Z'_y| \lesssim \delta^{-s} < \delta^{-(n-2)}. $$
Here in this introduction we omit the details on a technical approximation process to guarantee $\min_{p \in \calC_z}\{\angle(L,T_p\calC_z)\}  \ge a$.
Recalling \eqref{linecone0}, this estimate is sufficient to deduce Lemma \ref{volume2} and then Theorem \ref{cormain}, which completes the outline of the proof of Theorem \ref{cormain}.


\bigskip
\noindent
{\bf Organization.}
Section \ref{s2} is mainly designed to show Lemma \ref{pairvolume2}.
In Section \ref{s3}, we show Theorem \ref{planemain}.
Section \ref{s4} is devoted to showing Theorem \ref{lineconen}.
Finally, we complete the paper by proving Theorem \ref{cormain} in Section \ref{s5}.

\bigskip
\noindent
{\bf Notation.}
The notation $A \lesssim B$ means that there exists a constant $C \geq 1$ such that $A \leq CB$. The two-sided inequality $A \lesssim B \lesssim A$ is abbreviated to $A \sim B$. The constant $C$ can depend on the dimension $n$, and if we fix a cinematic family $F$ of cinematic constant $K$ and doubling constant $D$, $C$ may depend on $K$ and $D$.
If we want to stress the dependence of the constant $C$ with some  parameter "$\theta$", we indicate this by writing $A \lesssim_{\theta} B$. The constant $C$ may change from line to line in the proof. For simplicity, we write $\log=\log_2$ throughout the paper.

By the notation $|A|$ we denote the cardinality of $A$ if $A$ is a set and we denote the Euclidean norm if $A$ is a point. By $\mathcal{H}^{s}(A)$ (resp. $\mathcal{L}^{k}(A)$) the $s$-dimensional Hausdorff measure (resp. $k$-dimensional Lebesgue measure) of $A$. For $\delta>0$, $A^\delta$ stands for the $\delta$-neighbourhood and $|A|_\delta$ stands for the $\delta$-covering number of $A$. For a set $A \subset \R^n$ and a point $p \in \R^n$, $A \pm \{p\}$ is the set $\{x \pm p: x \in A \}$.



\bigskip
\noindent
{\bf Acknowledgement.}
The author would like to thank Tuomas Orponen for his interest in this paper, in particular, for pointing out the sublevel set technique in Remark \ref{level} and providing the references  \cite{Jarvenpaa2008,FasslerOrponen14}.

\section{Cinematic Maps and Their Intersections} \label{s2}
We define a class of cinematic maps and show that the intersection of any two cinematic maps
satisfies the upper bound in Lemma \ref{pairvolume2}. Then, we check that the maps $\{f_z\}_{z \in Z}$ arising from the restricted projection are cinematic maps.

\subsection{Cinematic Maps}
\begin{definition}[Cinematic Map]\label{cine}
  Let $F \subset C^2(U,R^l)$ where $U \subset \mathbb{R}^k$ is a domain, $k,l \in \mathbb{N}$ with $l \ge k$. We say $F$ is a cinematic family of
maps, with cinematic constant $K$ and doubling constant $D$ if the following conditions
hold:
\begin{enumerate}
  \item $F$ is contained in a ball of diameter $K$ under the usual metric on $C^2(U,R^l)$;
  \item $F$ is a doubling metric space, with doubling constant at most $D$;
  \item For all $f, g \in F$, we have
  $$  \inf_{(x,\xi) \in U \times S^{k-1}} \{ |f(x) - g(x)| + |\nabla_\xi f(x) -\nabla_\xi g(x)| \} \ge K^{-1} \|f-g\|_{C^2(U)}  $$
  where $\nabla_\xi h= \nabla h \cdot \xi$ for $h=f,g$, noting that $\nabla h$ is a linear map from $\R^{k}$ to $\R^l$ and $\xi \in S^{k-1} \subset \R^{k}$ is considered as a direction of $\R^{k}$.
\end{enumerate}
\end{definition}
\begin{remark} \rm
We make a remark about (3) in Definition \ref{cine}. Let us  assume $|f(x) - g(x)|=0$ at some $x$. Then (3) implies that
$$\inf_{\xi \in  S^{k-1}}|\nabla_\xi f(x) -\nabla_\xi g(x)| \sim_K \|f-g\|_{C^1(U)}.$$
This condition guarantees that whenever $f(x)=g(x)$, the smallest singular value of $\nabla(f-g)(x)$ is not zero, i.e. $\nabla(f-g)(x)$ as an $(l \times k)$-matrix has full rank.
\end{remark}

\begin{lemma} \label{local}
Assume $\eta>0$, $n \ge 3$.
Let
$F \subset C^{2}([0,1]^{n-2},[0,1]^{n-1})$ be a cinematic family with cinematic constant $K$ and $f,g \in F$.
  If $U \subset [0,1]^{n-2}$ is a (closed) dyadic cube such that its diameter is smaller than $\frac\eta{2K}$, and $k$ is one of the functions $|f-g|, |\nabla_\xi (f-g)|$ for some $\xi \in S^{n-3}$, then $k$ satisfies either one of the following two inequalities:
   \begin{itemize}
     \item[(S)] $ k(x) < \eta\|f-g\|_{C^2([0,1]^{n-2}) } \mbox{ for all } x \in U$.
     \item[(L)] $k(x) \ge \eta \|f-g\|_{C^2([0,1]^{n-2})}/2 \mbox{ for all } x \in U$.
   \end{itemize}
\end{lemma}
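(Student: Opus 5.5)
The plan is to show that $k$ is Lipschitz on $U$ with constant $\|f-g\|_{C^2([0,1]^{n-2})}$, and then separate the two alternatives by a simple dichotomy. Write $h=f-g$ and $M=\|h\|_{C^2([0,1]^{n-2})}$. If $M=0$ then $k\equiv 0$ and (S) fails only in a degenerate sense. In that case both sides of (L) are $0$, so (L) holds. Hence we may assume $M>0$.

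\textbf{Lipschitz bound.} Both candidate functions are $M$-Lipschitz.

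For $k=|h|$, the reverse triangle inequality and the mean value inequality give
$$\bigl||h(x)|-|h(y)|\bigr|\le |h(x)-h(y)|\le \sup|\nabla h|\,|x-y|\le M|x-y|.$$

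For $k=|\nabla_\xi h|$ with $\xi$ fixed, the map $x\mapsto \nabla h(x)\cdot \xi$ is $C^1$ with derivative bounded by $\sup|\nabla^2 h|\le M$. Therefore
$$\bigl||\nabla_\xi h(x)|-|\nabla_\xi h(y)|\bigr|\le M|x-y|.$$

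The segment between any two points of $U$ stays in $U$, since $U$ is a cube and hence convex. This justifies using the mean value inequality along that segment. (Depending on the norm convention for $C^2$, this step may cost an absolute constant, which can be absorbed by the choice of $\eta/(2K)$; note $K\ge 1$ is implicit.)

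\textbf{Dichotomy.} Suppose (S) fails, so there is $x_0\in U$ with $k(x_0)\ge \eta M$. For any $x\in U$ we have $|x-x_0|\le \diam U<\eta/(2K)\le \eta/2$. The Lipschitz bound then gives
$$k(x)\ge k(x_0)-M|x-x_0|\ge \eta M-\eta M/2=\eta M/2,$$
which is (L).

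The only point needing care is the Lipschitz estimate, since the absolute value and the norm convention on $C^2$ could each introduce constants. I expect this to be harmless because the factor $K\ge1$ in the diameter hypothesis provides slack. The cinematic condition (3) is not needed for this lemma.
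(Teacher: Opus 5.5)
Your proposal is correct and follows the paper's argument. Both proofs rest on the fact that $k$ is $\|f-g\|_{C^2}$-Lipschitz on the convex cube $U$, so its oscillation there is less than $\|f-g\|_{C^2}\,\eta/(2K)\le \eta\|f-g\|_{C^2}/2$, using the same implicit assumption $K\ge 1$ that you flag. The only difference is cosmetic: the paper assumes (L) fails and derives (S), whereas you assume (S) fails and derive (L), which is logically equivalent.
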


\begin{proof}
 Write $d=\|f-g\|_{C^2( [0,1]^{n-2}) }$.
 Since $F$ is contained in a ball of diameter $K$, we know
 $t \le K$. In particular, we know Lip$(f-g)(x) \le K$ and Lip$|\nabla(f-g)|(x)\le K$ for all $x \in U$. We have
 $$  |k(x)-k(x')| \le \max_{y\in U}\mbox{{\rm Lip} } k(y)\cdot|x-x'| \le K\diam U < \frac{\eta}2, \quad \forall x,x' \in U,  $$
 where $k$ is one of the  functions $|f-g|, |\nabla_\xi (f-g)|$.

 Next,  suppose that the
alternative (L) fails for $k = f - g$. So there is $x_0 \in U$ such that $|k(x_0)| < \eta d/2$. And for any $x \in U$, since $|x-x_0| < \eta/(2K)$, by defining $l= k/d$, we deduce that Lip$_{[0,1]^{n-2}}l \le 1$ and
$$ \frac{|k(x)|}{d} \le \frac{|k(x_0)|}{d}+ \left| \frac{|k(x)|}{d} - \frac{|k(x_0)|}{d}\right| < \frac{\eta}{2} + |l(x)-l(x_0)| \le \frac{\eta}{2} +\mbox{ [Lip$_{[0,1]^{n-2}}l$}] |x-x_0|  \le \eta.$$
 Thus the alternative (S) holds for $k$. The proof of the case $k = |\nabla_\xi (f-g)|$ is the
same.
\end{proof}

The above lemma gives the following corollary.

\begin{cor} \label{uniq}
  Let
$F \subset C^{2}([0,1]^{n-2},[0,1]^{n-1})$ be a cinematic family of maps with cinematic constant $K$, $f,g \in F$ and $h:=f-g$. Then for each dyadic cube $U \subset [0,1]^{n-2}$ with ${\rm diam \, } U < \frac{1}{4K^2}$,

{\rm (A):} at least one of the following two conditions hold:
\begin{enumerate}
  \item[(i)]
$|h(x)| \ge \frac{1}{2K} \|h\|_{C^2([0,1]^{n-2}) }$ for all $x \in  U$.
  \item[(ii)]
$|\nabla_\xi h(x)| \ge \frac{1}{2K} \|h\|_{C^2( [0,1]^{n-2}) }$ for all $x \in  U$ and $\xi \in S^{n-3}$.
\end{enumerate}

{\rm (B):} Moreover, if (A)(ii) holds, then
\begin{equation}\label{hxy}
  |h(x)|+| h(y)| \ge \frac{\|h\|_{C^2( [0,1]^{n-2}) }}{4K}|x-y|, \quad \forall x,y \in U.
\end{equation}
\end{cor}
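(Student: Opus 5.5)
Write $d=\|h\|_{C^2([0,1]^{n-2})}$ and apply Lemma \ref{local} with $\eta = 1/K$. Then $\frac{\eta}{2K} = \frac{1}{2K^2}$, and since $\operatorname{diam} U < \frac{1}{4K^2}$ the lemma applies to $U$. It also applies to slightly enlarged regions of diameter below $\frac{1}{2K^2}$, which leaves room for the argument below.

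\textbf{Part (A).} Suppose (A)(i) fails. Then $|h|$ does not satisfy alternative (L), so it satisfies (S): $|h(x)|<d/K$ on $U$. Definition \ref{cine}(3) gives, for every $x\in U$ and $\xi\in S^{n-3}$,
$$|h(x)|+|\nabla_\xi h(x)|\ge d/K.$$
From this alone we only get $|\nabla_\xi h(x)|>0$, which is too weak. To get the constant $\frac{1}{2K}$ I will apply Lemma \ref{local} to each $k=|\nabla_\xi h|$. For each fixed $\xi$, either (L) holds, giving $|\nabla_\xi h|\ge d/(2K)$ on $U$ as needed, or (S) holds, giving $|\nabla_\xi h|<d/K$ on $U$. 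In the second case I need to exclude smallness of $\nabla_\xi h$. I plan a quantitative re-run with a smaller $\eta$ to separate the thresholds. Taking $\eta=\frac1{2K}$ in the oscillation estimate from the proof of Lemma \ref{local}, the oscillation of $|h|$ and of $|\nabla_\xi h|$ on $U$ is at most $K\operatorname{diam}U\cdot d<\frac{d}{4K}$. Here I use the normalization $\operatorname{Lip}\le Kd$, as in the lemma's proof via $l=k/d$.

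Now assume $|h(x_0)|<\frac{d}{2K}$ at some $x_0\in U$; otherwise (i) holds. Condition (3) at $x_0$ gives $|\nabla_\xi h(x_0)|>\frac{d}{2K}$ for every $\xi$. Spreading this over $U$ with the oscillation bound needs slack: I will get the bound $\frac{d}{4K}$ directly, or reach exactly $\frac{1}{2K}$ by choosing the splitting point more carefully. The careful choice is to split on whether $\min_U|h|\ge\frac{d}{2K}$, which is case (i), and otherwise to use (3) at the minimizer together with the uniform oscillation bound. I expect this constant bookkeeping to be the main obstacle; the structure itself is just Lemma \ref{local} combined with (3).

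\textbf{Part (B).} Assume (ii), and fix $x,y\in U$ with $x\ne y$. Let $\xi=(y-x)/|y-x|$. By convexity of $U$ the segment from $x$ to $y$ lies in $U$, so
$$h(y)-h(x)=\int_0^{|y-x|}\nabla_\xi h(x+t\xi)\,dt.$$
Let $v=\nabla_\xi h(x)$. By (ii), $|v|\ge\frac{d}{2K}$. Along the segment, $|\nabla_\xi h(x+t\xi)-v|\le Kd\cdot\operatorname{diam}U<\frac{d}{4K}$, because the second derivatives are bounded by $K d$ under the normalization above. Taking the inner product with $v/|v|$ gives
$$|h(y)-h(x)|\ge\Big(\frac{d}{2K}-\frac{d}{4K}\Big)|x-y|=\frac{d}{4K}|x-y|.$$
The triangle inequality $|h(x)|+|h(y)|\ge|h(y)-h(x)|$ then yields \eqref{hxy}.
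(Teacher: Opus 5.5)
\textbf{Part (B)} is correct and is essentially the paper's argument: the fundamental theorem of calculus along the segment, plus the Lipschitz bound on $\nabla_\xi h$. Pairing with $v/|v|$ is a clean way to handle vector-valued $h$.

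\textbf{Part (A)} has a genuine gap, exactly at the step you call constant bookkeeping, and it cannot be closed as planned. Suppose you split on whether $\min_U|h|\ge\frac{d}{2K}$. Condition (3) of Definition \ref{cine} at the minimizer $x_0$ controls $\nabla_\xi h$ only at $x_0$. Elsewhere in $U$ one may have $|h|>\frac{d}{2K}$, so (3) gives less than $\frac{d}{2K}$ there. Transporting the bound from $x_0$ costs the oscillation of $\nabla_\xi h$. You end with $\frac{d}{4K}$, or $\frac{d}{2K}-\frac{d}{4K^2}$ using $\operatorname{Lip}\nabla_\xi h\le d$.

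No choice of splitting point does better, because the dichotomy with $\frac1{2K}$ in both alternatives is false. Here is a counterexample with $n=3$:
\begin{itemize}
\item Let $g\equiv(\frac12,\frac12)$ and $f=g+(h,0)$, where $h(x)=c-\frac c2e^{1/3-x}$ on $[0,\frac12]$.
\item On that interval $h,h'>0$, $h+h'\equiv c$, and $h(\frac13)=h'(\frac13)=\frac c2$.
\item Extend $h$ to $[0,1]$ keeping $h,h'>0$, $h+h'\ge c$ and $d:=\|h\|_{C^2}\ge c$. Take $c$ small and set $K:=d/c$.
\item Then $\{f,g\}$ is cinematic with constant $K$, and $\frac{d}{2K}=\frac c2$.
\item Take any dyadic interval of diameter $<\frac1{4K^2}$ containing $\frac13$ in its interior. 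At its left endpoint $h<\frac c2$, and at its right endpoint $h'<\frac c2$. So (i) and (ii) both fail.
\end{itemize}

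\textbf{The missing idea} is to put the loss on the $|h|$ side, never on (ii), since (ii) with $\frac1{2K}$ is what (B) and Lemma \ref{fg} consume. The paper applies Lemma \ref{local} to $|h|$ with $\eta=\frac1{2K}$. In effect it splits on whether $|h|\ge\frac{d}{4K}$ holds on all of $U$.

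If this fails at some $x_0$, the oscillation bound gives $|h|<\frac{d}{2K}$ uniformly on $U$. Then (3) applied pointwise gives $|\nabla_\xi h(x)|>\frac dK-\frac d{2K}=\frac d{2K}$ for every $x\in U$ and every $\xi$, with no loss.

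This argument, the paper's included, yields alternative (i) only with $\frac1{4K}$, since that is what (L) gives for $\eta=\frac1{2K}$. What is really proved is therefore: either $|h|\ge\frac{d}{4K}$ on $U$, or (ii) holds with $\frac1{2K}$. That is exactly what Lemma \ref{fg} needs. There $|h(x)|\le\delta<\frac{d}{4K}$ at some point of $U$, which forces (ii) and then (B).
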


\begin{proof}
We first show (A):
   We apply Lemma \ref{local} to $h$ with $\eta = 1/(2K)$. As a result, we know for each dyadic cube $U \subset [0,1]^{n-2}$ with diameter ${\rm diam \, } U < \frac{1}{4K^2} $,
 either (S) or (L) holds for $k$ and all $x \in U$, where $k$ is one of the
functions $h$, or $\nabla_\xi h$. On the other hand,
since $F$ is a cinematic family of functions, if the alternative (L) does not hold for the functions $h$, then by Definition \ref{cine} (3), for all $\xi \in S^{n-2}$, (L) holds for $\nabla_\xi h=\nabla_\xi (f-g)$.

We show (B). Fix $x,y \in U$. Note that if $x=y$, \eqref{hxy} holds trivially. We thus assume
$x \ne y$ and write
 $$ y = x+ \xi_y |y-x|, \mbox{ where } \xi_y = \frac{y-x}{|y-x|} \in S^{n-3}$$
 By Definition \ref{cine} (1), we know that
$$d:=\|h\|_{C^2([0,1]^{n-2})} \le 2K < \infty$$
 and hence for any $s \in [0,|x-y|]$, we have
$$  |\nabla_{\xi_y}h(x+s\xi_y) - \nabla_{\xi_y}h(x)| \le {\rm Lip}(\nabla_{\xi_y} h)\cdot |x+s\xi_y-x| \le  ds.   $$
Let
$$ v(s):=\nabla_{\xi_y}h(x+s\xi_y) - \nabla_{\xi_y}h(x), s \in [0,|x-y|]. $$
We deduce that
\begin{equation}\label{vs}
 |v(s)| \le ds \le d|x-y|, \quad s \in [0,|x-y|].
\end{equation}
We compute
\begin{align*}
  |h(x)|+| h(y)|  & \ge |h(y)-h(x)| = |\int_0^{|x-y|}\nabla_{\xi_y}h(x+s\xi_y) \, ds | \\
   & =|\int_0^{|x-y|}\nabla_{\xi_y}h(x) \, ds + \int_0^{|x-y|}v(s) \, ds |  \\
   &\ge |\nabla_{\xi_y}h(x)||x-y| - |\int_0^{|x-y|}v(s) \, ds | .
\end{align*}
Since $\nabla_{\xi_y}h$ satisfies (A)(ii), we have $|\nabla_{\xi_y}h(x)| \ge \frac{d}{2K}$. Combining \eqref{vs}, we deduce that
$$ |h(x)|+| h(y)| \ge \frac{d}{2K}|x-y| - d|x-y|^2 = d|x-y|(\frac{1}{2K} - |x-y|).  $$
Since $\diam U < \frac{1}{4K^2}$, we know $\frac{1}{2K} - |x-y| > \frac{1}{4K}$, which gives \eqref{hxy} as desired.

  The proof is complete.
\end{proof}

Recall from \eqref{vertical} that for arbitrary $\eta>0$ and
open set $D \subset [0,1]^{n-2}$, the vertical $\eta$-neighborhood of the graph of $f:D \to \R^l$ is
   $$f^{\eta}(D):= \{(x,y) \in D \times \R^{l} : |(x,y)-(x,f(x))|= |y-f(x)|< \eta \}.$$

\begin{lemma}\label{nbhd}
  If $f \in C^1(D , \R^l)$, then
   $$\calL^{n-2+l}(f^{\eta}(D)) \lesssim \eta^{l}\cdot \|f\|_{C^1(D)}\cdot  \calL^{n-2}(D).$$
\end{lemma}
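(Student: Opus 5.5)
The plan is to compute the measure of $f^{\eta}(D)$ directly by Fubini, slicing along the base variable $x \in D$, and then to absorb the resulting dimensional constant into the factor $\|f\|_{C^1(D)}$. In the applications, which are the maps $f_z$ of \eqref{fw} and members of a cinematic family with constant $K$, this norm is bounded above and below by constants depending only on $n$ and $K$. I will normalise with the convention $\|f\|_{C^1(D)} \ge 1$, i.e.\ read the norm as $\max\{1, \sup_D|f| + \sup_D|\nabla f|\}$. Under this convention the $\|f\|_{C^1}$ factor costs nothing.

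\emph{First step.} By definition \eqref{vertical}, the set $f^{\eta}(D)$ is measurable, since $f$ is continuous and the set is open in $D\times\R^l$. Its slice over a fixed $x\in D$ is
\begin{equation*}
\{y\in\R^l : |y-f(x)|<\eta\} = B(f(x),\eta),
\end{equation*}
whose $\calL^l$-measure is $\omega_l\eta^l$, where $\omega_l$ is the volume of the unit ball in $\R^l$. Fubini--Tonelli on $\R^{n-2}\times\R^l$ then gives
\begin{equation*}
\calL^{n-2+l}(f^{\eta}(D)) = \int_D \calL^l(B(f(x),\eta))\,dx = \omega_l\,\eta^l\,\calL^{n-2}(D).
\end{equation*}
Since $\|f\|_{C^1(D)}\ge 1$ under the normalisation, this yields the claimed bound with implicit constant $\omega_l$, which depends only on $l\le n-1$ and hence only on $n$.

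\emph{Second step (robustness check).} The main point needing care is that later arguments, such as Lemma \ref{pairvolume2}, may apply the lemma to sets that are only comparable to vertical neighbourhoods, for instance Euclidean $\eta$-neighbourhoods of pieces of graphs. This is where $\|f\|_{C^1}$ genuinely enters, so I will also verify the lemma in that form. Suppose $(x,y)$ lies within Euclidean distance $\eta$ of $(x',f(x'))$ with $x'\in D$. Then $|x-x'|<\eta$, and hence
\begin{equation*}
|y-f(x)| \le |y-f(x')| + |f(x')-f(x)| < (1+\|f\|_{C^1(D)})\eta
\end{equation*}
by the mean value inequality, applied on a convex (e.g.\ dyadic cube) $D$. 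So such a point lies in the vertical $(1+\|f\|_{C^1})\eta$-neighbourhood over $D^\eta$. The first step then bounds its measure by a constant depending on $\|f\|_{C^1}$ times $\eta^l\calL^{n-2}(D^\eta)$. When $D$ is a union of cubes of side $\gtrsim\eta$, we have $\calL^{n-2}(D^\eta)\lesssim\calL^{n-2}(D)$. In the bounded-norm setting the polynomial dependence on $\|f\|_{C^1}$ is absorbed into $\lesssim_K$.

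The only real obstacle is thus bookkeeping. One must check that every later use either concerns the exact vertical neighbourhood $f^\eta(D)$ (where the first step suffices) or a Euclidean neighbourhood over cubes of side $\gtrsim \eta$ (where the second step applies). One must also check that the normalisation $\|f\|_{C^1(D)}\ge 1$ is consistent with Definition \ref{cine}(1). In the differences $f-g$ used in Lemma \ref{pairvolume2}, the norm may be small, so there I would apply the lemma to $f$ and $g$ individually rather than to $f-g$.
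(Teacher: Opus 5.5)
Your argument is correct, and it takes a different (and sharper) route than the paper. The paper writes $f^{\eta}(D)$ as the union of the vertical translates of the graph by vectors $\eta'$ with $|\eta'|<\eta$. It bounds the volume of this union by $\eta^{l}$ times $\calH^{n-2}$ of the graph, and then bounds the graph's measure by $\|f\|_{C^1(D)}\calL^{n-2}(D)$. You instead slice over the base and get the exact identity $\calL^{n-2+l}(f^{\eta}(D))=\omega_l\eta^{l}\calL^{n-2}(D)$, which uses no regularity of $f$ beyond continuity.

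Your normalisation $\|f\|_{C^1(D)}\ge 1$ is genuinely needed, not cosmetic. With the usual norm the stated inequality fails for $f\equiv 0$, or for any small constant $f$. The paper's intermediate inequality $\calH^{n-2}((D,f(D)))\le\|f\|_{C^1(D)}\calL^{n-2}(D)$ has the same defect: the graph projects $1$-Lipschitz onto $D$, so its area is at least $\calL^{n-2}(D)$. The area formula instead gives a bound of the form $(1+\|\nabla f\|_\infty^2)^{(n-2)/2}\calL^{n-2}(D)$. So the paper tacitly needs your normalisation too, while your computation shows that the factor $\|f\|_{C^1(D)}$ can be dropped altogether. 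That $f$-independent bound is all that Lemma \ref{fg} and Lemma \ref{enbhd} actually use, since only the upper bound $\|f\|_{C^1}\le K$ enters there.

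Your aside that $\|f_z\|_{C^1}$ is bounded below in the applications is not right, since $\|f_z\|_{C^1}\lesssim|z|$ and $f_0\equiv 0$, but nothing in your proof depends on it.

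The graph-area viewpoint is the natural one for Euclidean rather than vertical neighbourhoods, where $\nabla f$ genuinely matters. Your second step handles that case by comparison with a vertical neighbourhood of width $(1+\|f\|_{C^1})\eta$, which is fine. However, in Lemma \ref{enbhd} the base set $X_f$ is $\delta$-discrete, so the comparison $\calL^{n-2}(D^\eta)\lesssim\calL^{n-2}(D)$ is unavailable. There you should take as base the projection of the set itself, exactly as the paper does.
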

\begin{proof}
  Note that
  $$ f^{\eta}(D) = \bigcup_{|\eta'|<\eta} (D,f(D)+\eta'). $$
 Write $f=(f_1, \cdots, f_l)$ where $f_i \in C^1(D)$, $i=1,\cdots,l$.
 We furthermore have
 $$ f^{\eta}(D) = \bigcup_{|\eta'|<\eta} (D,f_1(D)+\eta', \cdots, f_l(D)+\eta'). $$
  Moreover, since the set $(D,f(D))$ has $(n-2)$-Hausdorff measure $\calH^{n-2}((D,f(D))) \le \|f\|_{C^1(D)}\calL^{n-2}(D)$, we deduce that
  $$ \calL^{n-2+l}(f^{\eta}(D)) \lesssim \eta^{l}\cdot \|f\|_{C^1(D)}\cdot  \calL^{n-2}(D), $$
  which completes the proof.
\end{proof}

\begin{lemma} \label{fg}
Let $\delta>0$,
$F \subset C^{2}([0,1]^{n-2},[0,1]^{n-1})$ be a cinematic family with cinematic constant $K$. Then, for any $f,g \in F$ with $d:=\|f-g\|_{C^2([0,1]^{n-2})} \ge \delta$, we have
\begin{equation}\label{meas}
  \calL^{2n-3}[f^{\delta}([0,1]^{n-2})\cap g^{\delta}([0,1]^{n-2})] \lesssim \frac{\delta^{2n-3}}{\|f-g\|_{C^2([0,1]^{n-2})}^{n-2}}.
\end{equation}
\end{lemma}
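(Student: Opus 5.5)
The plan is to reduce \eqref{meas} to a measure estimate for a sublevel set of $h:=f-g$. By the definition of vertical neighbourhoods,
\[
f^{\delta}([0,1]^{n-2})\cap g^{\delta}([0,1]^{n-2})\subset\{(x,y): x\in E,\ |y-f(x)|<\delta\},
\qquad E:=\{x\in[0,1]^{n-2}: |h(x)|<2\delta\},
\]
because $|y-f(x)|<\delta$ and $|y-g(x)|<\delta$ force $|h(x)|<2\delta$. Applying Fubini in the $y$-variable (or Lemma \ref{nbhd} with $\eta=\delta$, $l=n-1$, using $\|f\|_{C^1}\le K$), the left side of \eqref{meas} is $\lesssim \delta^{n-1}\calL^{n-2}(E)$. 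It therefore suffices to prove
\[
\calL^{n-2}(E)\lesssim (\delta/d)^{n-2},
\]
since $\delta^{n-1}(\delta/d)^{n-2}=\delta^{2n-3}/d^{n-2}$.

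To bound $\calL^{n-2}(E)$, partition $[0,1]^{n-2}$ into $O_K(1)$ dyadic cubes $U$ with $\diam U<\frac1{4K^2}$; the number of such cubes depends only on $K$ and $n$. Apply Corollary \ref{uniq}(A) to each cube.
\begin{itemize}
\item If (A)(i) holds on $U$, then $|h|\ge d/(2K)$ on $U$. Since $d\ge\delta$, we may assume $d\ge 4K\delta$; otherwise $(\delta/d)^{n-2}\gtrsim_K 1$ and the bound on $\calL^{n-2}(E)$ is trivial. Under this assumption $|h|\ge d/(2K)\ge 2\delta$ on $U$, so $E\cap U=\emptyset$.
\item If (A)(ii) holds on $U$, then \eqref{hxy} gives, for any $x,y\in E\cap U$, the bound $|x-y|\le 4K(|h(x)|+|h(y)|)/d<16K\delta/d$. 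Hence $E\cap U$ has diameter $\lesssim_K\delta/d$, and so $\calL^{n-2}(E\cap U)\lesssim(\delta/d)^{n-2}$.
\end{itemize}
Summing over the $O_K(1)$ cubes gives $\calL^{n-2}(E)\lesssim(\delta/d)^{n-2}$, which proves the lemma.

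The main point requiring care is the second case. Corollary \ref{uniq}(B) supplies exactly the one-point-concentration property: the full-rank derivative of $h$ on a small cube forces its near-zero set into a single ball. One should also check that the implied constants depend only on $K$ and $n$; this holds because the cube size is fixed by $K$.
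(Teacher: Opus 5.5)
Your proof is correct and follows essentially the paper's argument. Both reduce, via Fubini/Lemma \ref{nbhd}, to bounding the base projection of the intersection (your $E=\{|h|<2\delta\}$ restricted to a cube is the paper's $P^U_{f,g}$), dispose of $d\lesssim_K\delta$ trivially, and on each dyadic cube of diameter $<\frac{1}{4K^2}$ use Corollary \ref{uniq}(A) to exclude case (i) and (B) to confine the near-zero set of $h$ to a ball of radius $\lesssim_K\delta/d$. Two small remarks: your direct identification of the projection with the sublevel set is slightly cleaner than the paper's comparison of $(y,g(y))$ with nearby points $(z,f(z))$ in \eqref{cl1}, and the proof of Corollary \ref{uniq} actually only yields $|h|\ge d/(4K)$ in case (i), so your threshold should be $d\ge 8K\delta$, which changes nothing.
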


\begin{proof}
By
decomposing $[0,1]^{n-2}$ into dyadic cubes $\{U\}$ with with ${\rm diam \, } U < \frac{1}{4K^2}$ to prove \eqref{meas}, it suffices to show
$$ \calL^{2n-3}(f^\delta(U) \cap g^\delta(U)) \lesssim \frac{\delta^{2n-3}}{d^{n-2}}   $$
for each $U \subset[0,1]^{n-2}$.
In the following, we fix $U$ and write $f^\delta \cap g^\delta = f^\delta(U) \cap g^\delta(U)$ for simplicity.

  If $f^\delta \cap g^\delta = \emptyset$, then there is nothing to prove. Also, since $d \in [\delta,1]$, if $\delta \le d \le 4K\delta$,
   then using Lemma \ref{nbhd} and noting that $\|f\|_{C^1(U)} \le K$, we know that
   $$\calL^{2n-3}(f^\delta \cap g^\delta) \le \calL^{2n-3}(f^\delta) \sim \delta^{n-1} \sim \frac{\delta^{n-1}d^{n-2}}{d^{n-2}} \sim \frac{\delta^{2n-3}}{d^{n-2}} .$$
Thus in the following, we assume
\begin{equation}\label{ass1}
  d > 4K\delta.
\end{equation}

  Assume $f^\delta \cap g^\delta \ne \emptyset$.
  Let $P_{f,g}^U$ be the orthogonal projection of
  $f^\delta \cap g^\delta$ from $U \times [0,1]^{n-1}$ to $U$.
  We have
  $$ f^\delta \cap g^\delta \subset f^\delta(P_{f,g}^U). $$
  Thus
 by Lemma \ref{nbhd},
 we need to show
  \begin{equation}\label{Pfg}
   \mathcal{L}^{n-2} (P_{f,g}^U) \lesssim \frac{\delta^{n-2}}{d^{n-2}}.
  \end{equation}

  Write $h=f-g$. By $f^\delta \cap g^\delta \ne \emptyset$, we know there exists $x \in U$ such that $|h(x)|\le \delta < \frac{d}{4K}$ which implies $|h|$ does not enjoy Corollary \ref{uniq} (i).
  Hence Corollary \ref{uniq} (A)(ii) holds and
  we can apply Corollary \eqref{uniq} (B) to deduce that
  $$ |h(y)| \ge \frac{d}{4K} |x-y| - |h(x)| \ge \frac{d}{4K} |x-y| -\delta.  $$
  Thus if $|x-y| > 16K^2\delta/t$,
  \begin{equation}\label{hy}
    |h(y)| = |f(y)-g(y)| \ge 4K\delta.
  \end{equation}
  We claim that if $|x-y| > 16K^2\delta/d$, then $(y,g(y)) \notin f^\delta$, that is, for all $z \in U$ we have
  \begin{equation}\label{cl1}
    |(y,g(y)) - (z,f(z))| > \delta.
  \end{equation}
  Assume the claim holds. Then we can infer that
  $$   y \notin P_{f,g}^U, \mbox{ if }|x-y| > 16K^2\delta/d.  $$
  As a consequence
  \begin{equation}\label{ball}
    P_{f,g}^U \subset B(x,16K^2\delta/d ) \subset U \subset [0,1]^{n-2},
  \end{equation}
  which gives \eqref{Pfg} as desired.

  We are left to show \eqref{cl1}.
  Noting that if $|z-y|> \delta$, then \eqref{cl1} can not hold.
  Thus it only suffices to consider $z \in U$ with $|z-y| \le \delta$.

  Since $\|f\|_{C^2([0,1]^{n-2})} \le K$, we know
  $$  |f(z)-f(z')| \le K|z-z'|, \quad z,z' \in U. $$
  Thus for all $z \in U$ with $|z-y| \le \delta$, recalling \eqref{hy}, we have
  $$   |g(y)-f(z)| \ge |g(y)-f(y)| - |f(z)-f(y)| \ge 3K\delta,  $$
   which implies that
  $$  |(y, g(y))-(z, f(z)) | \ge |g(y)-f(z)|\ge 3K\delta, \quad z \in B(y,\delta) \cap U.   $$
 Thus \eqref{cl1} holds and
    the proof of Lemma \ref{fg} is complete.
\end{proof}

\subsection{$\{f_z\}_{z \in Z}$ is a Cinematic Family}

Recall the unit normal $\nu$ defined in \eqref{nu} and the dual surface $\Sigma^\ast= \nu([0,1]^{n-2}) \subset S^{n-1}$ in \eqref{szast}.

\begin{ex} \label{eg2}
   \rm
  Recall from Example \ref{eg1} (i) that
  $$ \Sigma_c:= S^{n-1} \cap \{(x_1, \cdots, x_n) : x_n =c\}, \quad c\in (-1,0)\cup(0,1). $$
  Then one could easily check that the dual $\Sigma^\ast_c$ of $\Sigma_c$ is
  $$ \Sigma_c^\ast= \Sigma_{\sqrt{1-c^2}}. $$
  We also note that $(\Sigma_c^\ast)^\ast = \Sigma_c$.
\end{ex}

We study the geometric property of the dual $\Sigma^\ast$.

\begin{lemma} \label{dual}
   Let $\Sigma \subset S^{n-1}$ be an $(n-2)$-dimensional $C^2$ manifold having
\begin{itemize}
  \item[(i)] non-vanishing geodesic curvature if $n=3$;
  \item[(ii)] sectional curvature $>1$ if $n \ge 4$.
\end{itemize}
  Then, $\Sigma^\ast$ also satisfies (i) and (ii) respectively.

  Moreover,
  \begin{enumerate}
  \item[(a)] if $\kappa_1$, $\cdots$, $\kappa_{n-2}$ are the geodesic curvature ($n=3$) or principal curvatures ($n\ge4$) at $x \in \Sigma$, then $\kappa_1^{-1}$, $\cdots$, $\kappa_{n-2}^{-1}$ are the geodesic curvature/principal curvatures at $\nu(x) \in \Sigma^\ast$;
  \item[(b)] for each $x \in \Sigma$, we have
  $$  T_{\nu(x)}\Sigma^\ast = T_{x}\Sigma  \mbox{ and } T_{\nu(x)}S^{n-1} = T_{\nu(x)}\Sigma^\ast \oplus {\rm span}\{x\}.  $$
  \end{enumerate}
\end{lemma}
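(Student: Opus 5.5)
We work in the Darboux-type frame along $\Sigma$. Parametrize $\Sigma$ locally by $x\mapsto \Sigma(x)$; write $p=\Sigma(x)$ and let $\nu(x)$ be the unit normal to $\Sigma$ inside $T_pS^{n-1}$. The orthonormal frame $\{p, e_1,\dots,e_{n-2},\nu\}$ then spans $\R^n$. The plan is to differentiate $\nu$ and read off both the tangent space of $\Sigma^\ast$ and its second fundamental form.

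\textbf{Step 1: the derivative of $\nu$.} Since $|\nu|=1$ we have $\partial_i\nu\perp\nu$. Differentiating $\nu\cdot p=0$ gives $\partial_i\nu\cdot p=-\nu\cdot\partial_i p=0$, because $\partial_i p\in T_p\Sigma\perp\nu$. Hence $\partial_i\nu\in T_p\Sigma$, and by the Weingarten relation $\partial_i\nu=-S(\partial_i p)$, where $S$ is the shape operator of $\Sigma\subset S^{n-1}$ with respect to $\nu$. Its eigenvalues are the principal curvatures $\kappa_j$; for $n=3$ there is a single eigenvalue, the geodesic curvature.

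Because the $\kappa_j$ are nonzero (for $n\ge4$ this uses the Gauss equation, as explained in Step 3), $S$ is invertible. Therefore $\nu$ is an immersion and $\Sigma^\ast$ is a $C^1$ manifold with $T_{\nu(x)}\Sigma^\ast=T_p\Sigma$. Its unit normal inside $S^{n-1}$ at $\nu(x)$ must be orthogonal to $\nu$ (the radial direction at $\nu(x)$) and to $T_p\Sigma$. So it is $\pm p$, which proves (b).

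For the regularity of $\Sigma^\ast$: $\nu$ is only $C^1$ when $\Sigma\in C^2$. I would handle this by computing the curvature of $\Sigma^\ast$ through its normal field $p$, which is $C^2$, so the curvature is well defined.

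\textbf{Step 2: duality of curvatures.} Take $N^\ast(\nu(x))=p$ as the unit normal of $\Sigma^\ast$ in $S^{n-1}$. Differentiating the normal along $\Sigma^\ast$ gives
\[
\partial_i N^\ast=\partial_i p=-S^{-1}(\partial_i\nu),
\]
since $\partial_i\nu=-S\,\partial_i p$. So the shape operator of $\Sigma^\ast$ at $\nu(x)$, acting on $T_{\nu(x)}\Sigma^\ast=T_p\Sigma$, is $S^{-1}$ up to a sign coming from the orientation. Its eigenvalues are $\kappa_j^{-1}$, with the same eigenvectors, which is (a). For $n=3$ this already gives nonvanishing geodesic curvature of $\Sigma^\ast$.

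\textbf{Step 3: sectional curvature for $n\ge4$.} The Gauss equation for a hypersurface of $S^{n-1}$ gives
\[
K(e_i,e_j)=1+\kappa_i\kappa_j
\]
on principal directions. More generally, sectional curvature $>1$ is equivalent to the second fundamental form $II$ satisfying $II(u,u)II(v,v)-II(u,v)^2>0$ for all independent $u,v$. For $n-2\ge2$ this forces $II$ to be definite, so all $\kappa_j$ are nonzero and share one sign; this justifies the invertibility of $S$ used in Step 1. The inverse $S^{-1}$ is then also definite, so all products $\kappa_i^{-1}\kappa_j^{-1}>0$. The corresponding $2\times2$ minors of $S^{-1}$ are positive, and hence the sectional curvatures of $\Sigma^\ast$ exceed $1$.

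\textbf{Main obstacle.} I expect the hardest step to be showing that "sectional curvature $>1$" is equivalent to definiteness of $II$, i.e. that positivity of all $2$-plane minors implies definiteness and is preserved under inversion. My first attempt will be a short linear-algebra argument. If $II(u,u)=0$ for some $u\ne0$, then taking any $v$ independent of $u$ gives $II(u,u)II(v,v)-II(u,v)^2=-II(u,v)^2\le0$, a contradiction. So $II(u,u)\neq0$ for all $u\ne0$; the unit sphere of $T_p\Sigma$ is connected since $n-2\ge2$, so $II$ is definite. Conversely, a definite $S^{-1}$ has all $2\times2$ Gram-type minors positive by Cauchy–Schwarz for the inner product it defines. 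A secondary difficulty is the $C^1$-versus-$C^2$ regularity of $\Sigma^\ast$, which I would address by computing the second fundamental form via the normal field $p$ rather than through second derivatives of $\nu$.
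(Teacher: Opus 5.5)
Your proposal is correct and follows essentially the same route as the paper: both identify $d\nu$ with the shape operator of $\Sigma$ to get $T_{\nu(x)}\Sigma^\ast=T_x\Sigma$ and $\nu^\ast(\nu(x))=x$. Both then read off the shape operator of $\Sigma^\ast$ as the inverse of that of $\Sigma$, which gives the curvatures $\kappa_j^{-1}$, and conclude with the Gauss equation. Your version is a bit more self-contained: you treat $n=3$ in the same computation rather than quoting it, and you justify via the minor argument that sectional curvature $>1$ forces a definite second fundamental form. One small correction: the normal field of $\Sigma^\ast$, namely $p\circ\nu^{-1}$ viewed as a function on $\Sigma^\ast$, is only $C^1$ rather than $C^2$. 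That is exactly enough to define the shape operator, and in fact shows $\Sigma^\ast$ is a $C^2$ hypersurface.
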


\begin{proof}
   If $n=3$, the lemma is already known in \cite[Page 9]{gan2024}. Thus we only prove the case $n\ge 4$ and for $(a),(b)$, the case $n=3$ is an easier modification of the following proof. Since $\Sigma \subset S^{n-1}$, Gauss' equation reads as
   $$ K_{\xi_i,\xi_j}(x) = \kappa_{i}(x) \kappa_{j}(x) + 1 ,\quad x \in \Sigma  $$
  where $\{\xi_i\}_{1 \le i\le n-2} \subset T_x\Sigma$ are the $n-2$ principal directions,
  $K_{e_i,e_j}(x)$ is the sectional curvature of $\Sigma$ at $x$ spanned by $\xi_i$ and $\xi_j$ and $\kappa_{i}(x)$ is the principal curvature in the direction $\xi_i$. Thus the sectional curvature $>1$ is equivalent to that the $n-2$ principal curvatures at each $x \in \Sigma$, $\kappa_1(x), \cdots, \kappa_{n-2}(x)$ are non-vanishing and have same sign. Fixing an orientation of the normal vector $\nu$, we assume $\kappa_i(x)>0$ for all $x \in \Sigma$ and $i=1, \cdots, n-2$.

  It suffices to check that principal curvatures $\kappa_i^\ast$ of the dual manifold $\Sigma^\ast$ are positive at all $y \in \Sigma^\ast$.

  For each $y \in \Sigma^\ast$, $y=\nu(x)$ for some $x \in \Sigma$.
  The tangent space $T_y\Sigma^\ast$ of $\Sigma^\ast$ at $y$ is the space $\nabla \nu(T_x\Sigma)$. Since $\nabla \nu$ is the Gauss map, $\nabla \nu$ is also the second fundamental form of $\Sigma$ and thus has real eigenvalues, which are precisely the principal curvatures $\kappa_i$, $i=1,\cdots ,n-2$. Recalling that the principal directions
  $\{\xi_i=\xi_i(x)\}_{i=1, \cdots,n-2}$ are the eigenvectors of $\nabla \nu(x)$, we have
  \begin{equation}\label{eiast}
    \xi_i^\ast:= \nabla \nu(\xi_i) = \kappa_i \xi_i, \quad i=1, \cdots,n-2.
  \end{equation}
  Thus $\{\xi_i^\ast\}_{i=1, \cdots,n-2}$ also forms a basis of $T_y\Sigma^\ast$.
  Noting that
  $$ \R^n= {\rm span} \{x, \xi_1, \cdots, \xi_{n-2}, \nu(x)=y\} ={\rm span} \{x, \nu(x)=y\} \oplus T_y\Sigma^\ast,  $$
  we deduce that
  the normal vector $\nu^\ast(y)$ which satisfies
   $$\nu^\ast(y)\subset T_{y} S^{n-1} \mbox{ and } \nu^\ast(y) \perp T_{y}\Sigma^\ast$$
   is $x$, i.e. $\nu^\ast(y) = \nu^\ast(\nu(x)) = x$ for all $y = \nu(x)$, $x \in \Sigma$. Thus (b) holds.
   In particular, $\nu^\ast \circ \nu =$Id and $\nabla(\nu^\ast \circ \nu) = \nabla $Id $=$ Id where Id is the identity map.

   We then compute the principal curvatures of $\Sigma^\ast$ at $y$, which are the eigenvalues of $\nabla\nu^\ast(y)$.
   Recalling \eqref{eiast}, we have
   $$ \nabla\nu^\ast(y)(\xi_i^\ast) = \nabla\nu^\ast(\nu(x))(\xi_i) =\xi_i = \frac{1}{\kappa_i(x)}\xi_i^\ast, \quad i=1, \cdots,n-2.$$
   As a result,
   the principal curvatures of $\Sigma^\ast$ at $y$ are $\{\kappa_i^{-1}(x)\}_{i=1, \cdots,n-2}$. Thus (a) holds and the sectional curvature spanned by $\xi_i^\ast$ and $\xi_j^\ast$ is
   $$K_{\xi_i^\ast,\xi_j^\ast}(y) = \frac1{\kappa_{i}(x) \kappa_{j}(x) }+ 1 >1 ,\quad y=\nu(x), x \in \Sigma. $$
   The proof is complete.
\end{proof}

In the following of the paper, we let
$$ \underline\kappa:= \min_{x \in \Sigma,i =1,\cdots, n-2}\{\kappa_i(x)\} \mbox{ and } \bar\kappa:= \max_{x \in \Sigma,i =1,\cdots, n-2}\{\kappa_i(x)\} $$
where $\kappa_i(x)$ are the principle curvatures of $\Sigma$ at $x$.
Under the assumption of Theorem \ref{planemain}, we know
$$\bar\kappa \ge \underline\kappa >0.$$
By Lemma \ref{dual}, we know that principal curvatures $\kappa_i^\ast$ of the dual manifold $\Sigma^\ast$ satisfy
\begin{equation}\label{bdds}
  \frac1{\bar\kappa}= \min_{x \in \Sigma^\ast,i =1,\cdots, n-2}\{\kappa_i^\ast(x)\} \mbox{ and } \frac1{\underline\kappa}= \max_{x \in \Sigma,i =1,\cdots, n-2}\{\kappa_i^\ast(x)\}.
\end{equation}

\begin{remark} \label{remII}
    Recalling that the second fundamental form of the normals $\nu$ of $\Sigma$ and $\nu^\ast$ of $\Sigma^\ast$ are
    $$ T\Sigma^\ast \ni \zeta \mapsto (\nabla_\zeta \nu \cdot \zeta)(x), \ x \in \Sigma \mbox{ and } T\Sigma^\ast \ni \zeta \mapsto (\nabla_\zeta \nu^\ast \cdot \zeta)(x), \ x\in \Sigma^\ast$$
    and the eigenvalues of the second fundamental form are precisely principal curvatures,
    we deduce from \eqref{bdds} that
    \begin{equation}\label{II}
    (\nabla_\zeta \nu \cdot \zeta)(x) \le \bar \kappa|\zeta|^2, \quad \forall x \in \Sigma, \ \forall \zeta \in T_x\Sigma
\end{equation}
and
\begin{equation}\label{IIast}
    (\nabla_\zeta \nu^\ast \cdot \zeta)(x) \ge \frac{1}{\bar \kappa}|\zeta|^2, \quad \forall x \in \Sigma^\ast, \ \forall \zeta \in T_x\Sigma^\ast.
\end{equation}
\end{remark}

\begin{remark}\label{basis}
Recall from \eqref{eix} that for each $x \in [0,1]^{n-2}$, $\{e_i(x)= \nabla\Sigma(x)\cdot\partial_i\}_{1 \le i \le n-2}$ is a basis of $T_x\Sigma$.
   Since $\Sigma:[0,1]^{n-2} \to \Sigma([0,1]^{n-2})$ is a diffeomorphism, for each $x \in [0,1]^{n-2}$, setting $w=\Sigma(x)$, we define
$$  \{\tilde e_i(w):= e_i(\Sigma^{-1}(w))= \nabla \Sigma(x) \cdot \partial_i\}_{1\le i \le n-2}. $$
Recalling Lemma \ref{dual} (b), we know that
\begin{equation*}
  {\rm span}  \{w=\nu^\ast, \tilde e_1, \cdots, \tilde e_{n-2}, \nu\}  = \mathbb{R}^{n}
\end{equation*}
where $\nu$ (resp. $\nu^\ast$) is the unit normal of $\Sigma \subset S^{n-1}$ (resp. $\Sigma^\ast \subset S^{n-1}$). In other words, $\{\nu^\ast, \tilde e_1, \cdots, \tilde e_{n-2}, \nu\}$ forms a basis of $\R^n$. Thus
we know for any $p \in \R^n$, $p$ can be uniquely written as
$$  p =  a_0 \nu^\ast + a_1 \tilde e_1 + \cdots + a_{n-2} \tilde e_{n-2} + a_{n-1} \nu
$$
where $a_i\in \R$ for $i=0, \cdots, n-1$. However, since the basis $\{\nu^\ast, \tilde e_1, \cdots, \tilde e_{n-2}, \nu\}$ may not be orthonormal, we have the following lower bound
$$  \sum_{i=0}^{n-1} a_i^2 \ge C_\Sigma |p|^2  $$
for some $0<C_\Sigma \le 1$ only depending on $\Sigma$.
\end{remark}

\begin{lemma} \label{cineprop}
  Let $Z$ be a subset in $B(o,1/2)\subset \mathbb{R}^{n}$ and $\Sigma : [0,1]^{n-2} \to \mathbb{S}^{n-1}$ be as in Theorem \ref{planemain}. Then  $F:=\{f_{z} \in C^2( [0,1]^{n-2},[0,1]^{n-1})\}_{z \in Z}$ defined in \eqref{fw} is a cinematic family with the same doubling constant as $\mathbb{R}^{n}$. In particular, $F$ is biLipschitz homeomorphic to $Z$.
\end{lemma}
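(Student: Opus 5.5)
The map $z\mapsto f_z$ is linear in $z$: $f_z(x)-f_{z'}(x)=f_{z-z'}(x)$, with $f_w(x)=(e_1(x)\cdot w,\dots,e_{n-2}(x)\cdot w,\nu(x)\cdot w)$. So it suffices to compare $\|f_w\|_{C^2}$ with $|w|$, and the quantity $|f_w(x)|+|\nabla_\xi f_w(x)|$ with $|w|$, uniformly in $x\in[0,1]^{n-2}$ and $\xi\in S^{n-3}$. The plan has three steps: an upper bound, a pointwise lower bound, and then reading off the three conditions of Definition \ref{cine}.

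\textbf{Upper bound.} Since $\Sigma$ is $C^2$, the fields $e_i$ and $\nu$ are $C^1$ with bounded derivatives. To get full $C^2$ control one either assumes slightly more regularity or works with the $C^1$-type quantities that Definition \ref{cine} actually uses. This gives $\|f_w\|_{C^2}\lesssim_\Sigma |w|$. Because $Z\subset B(o,1/2)$, the family lies in a ball of bounded diameter, which is condition (1).

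\textbf{Pointwise lower bound (the main obstacle).} Fix $x$ and $\xi$, and set $w_0=\Sigma(x)$ and $\zeta=\nabla\Sigma(x)\xi\in T_{w_0}\Sigma$, so $|\zeta|\sim 1$. Expand $w$ in the basis of Remark \ref{basis}:
\[
w=a_0w_0+\sum_i a_i\tilde e_i+a_{n-1}\nu .
\]
The components of $f_w(x)$ are $e_j\cdot w$ and $\nu\cdot w$. Since $\{e_j,\nu\}$ spans $P_x$, we get $|f_w(x)|\sim|\pi_{P_x}w|$, and this controls $|(a_1,\dots,a_{n-1})|$ up to constants. The remaining task is to control $a_0=w\cdot w_0$.

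Differentiate in direction $\xi$. The last component gives $\nabla_\xi\nu\cdot w$. Write $\nabla_\xi\nu=-\langle\nabla_\zeta\nu,w_0\rangle w_0+(\text{tangential part})$. Since $\nu\perp w_0$, we have $\nabla_\zeta\nu\cdot w_0=-\nu\cdot\zeta=0$, so there is no $w_0$ component from $\nu$. The $w_0$ component must therefore come from the $e_j$. From $e_j\cdot w_0=0$ we get $\nabla_\xi e_j\cdot w_0=-e_j\cdot\zeta$, and hence
\[
\nabla_\xi(e_j\cdot w)=-a_0\,(e_j\cdot\zeta)+O\bigl(|\pi_{P_x}w|\bigr).
\]
Choosing $j$ with $|e_j\cdot\zeta|\gtrsim 1$, which is possible because $\{e_j\}$ is a basis of $T\Sigma$ and $\zeta\neq0$, gives $|a_0|\lesssim|\nabla_\xi f_w(x)|+|f_w(x)|$. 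Together with the first bound and the estimate $\sum a_i^2\ge C_\Sigma|w|^2$ from Remark \ref{basis}, this yields
\[
|f_w(x)|+|\nabla_\xi f_w(x)|\gtrsim_\Sigma |w|\gtrsim\|f_w\|_{C^2},
\]
which is condition (3).

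The delicate point is making sure the constant is uniform in $\xi$. The curvature hypothesis, through Lemma \ref{dual}, is what guarantees the non-degeneracy of $\nabla\nu$ if one instead routes the argument through the $\nu$-component. I would check both routes and keep whichever gives a clean uniform constant.

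\textbf{Conclusion.} Combining the two bounds gives $|z-z'|\sim\|f_z-f_{z'}\|_{C^2}$, so $z\mapsto f_z$ is bi-Lipschitz from $Z$ onto $F$. Conditions (1) and (3) follow from the bounds above. Doubling, condition (2), is inherited from $Z\subset\R^n$ under the bi-Lipschitz map, with a doubling constant comparable to that of $\R^n$.
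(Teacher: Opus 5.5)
Your proof is correct. It follows the same skeleton as the paper's proof:
\begin{itemize}
\item expand $y-z$ in the frame $\{\Sigma(x),e_1,\dots,e_{n-2},\nu\}$ of Remark~\ref{basis};
\item control the $P_x$-components by $|h(x)|$;
\item control the $\Sigma(x)$-component $a_0$ through the $\xi$-derivatives of the $e_j$-components;
\item deduce bi-Lipschitz equivalence and doubling.
\end{itemize}
The key step, however, is justified differently.

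The paper splits into two cases according to whether some $|a_i|$, $i\ge 1$, exceeds $M_2|y-z|$. In the second case it bounds $|\nabla_{\tilde e_i}[(y-z)\cdot\tilde e_i]|$ from below by $|a_0|\,|\nabla_{\tilde e_i}\nu^\ast\cdot\tilde e_i|$, minus Christoffel-symbol and $\bar\kappa$ error terms, appealing to the dual-manifold bound \eqref{IIast}. This yields constants in terms of $\bar\kappa$, $M_1$, $C_\Sigma$, and it makes the curvature of $\Sigma$ look like the reason the family is cinematic.

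You instead differentiate the identity $e_j\cdot\Sigma\equiv 0$. This gives $\nabla_\xi(e_j\cdot w)=-a_0(e_j\cdot\zeta)+O(|\pi_{P_x}w|)$ for $w=y-z$, and you close with a single inequality and no case split. This is cleaner and shows more. The main term comes from the curvature of the round sphere, not of $\Sigma$. Indeed, since $y-z$ is constant, the quantity the paper estimates is $(y-z)\cdot\partial_i^2\Sigma(x)$. Its $\Sigma(x)$-part is exactly $-a_0|\partial_i\Sigma(x)|^2$, by differentiating $\partial_i\Sigma\cdot\Sigma\equiv 0$. Consequently Lemma~\ref{cineprop} holds for every sufficiently smooth immersed $(n-2)$-dimensional $\Sigma\subset S^{n-1}$. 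With your proof, the curvature hypothesis is used in the paper only in Sections~\ref{s4}--\ref{s5}.

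There are three small points to clean up.
\begin{itemize}
\item Uniformity in $\xi$ is not delicate. Compactness of $[0,1]^{n-2}\times S^{n-3}$, together with the uniform nondegeneracy of $\nabla\Sigma$ and of the Gram matrix of $\{e_j\}$, gives $\max_j|e_j\cdot\zeta|\gtrsim 1$ directly.
\item The alternative ``route through the $\nu$-component'' is a dead end and should be dropped. As you observed yourself, $\nabla_\xi\nu$ has no $\Sigma(x)$-component, so it never sees $a_0$.
\item The ``$C^1$-type quantities'' option in your upper bound does not match Definition~\ref{cine}. Its condition (3), and the later use in Lemma~\ref{local}, involve the $C^2$ norm. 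One genuinely needs $\Sigma\in C^3$ (or $C^{2,1}$) to get $f_z\in C^2$ with $\|f_z\|_{C^2}\lesssim|z|$. The paper's estimate $\|f_z\|_{C^2}\le\|\Sigma\|_{C^2}\max|z|$ tacitly assumes the same.
\end{itemize}
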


\begin{proof}
First we show (1) in Definition \ref{cine}.
Since
\begin{equation}\label{cine1}
  \sup_{f_{z} \in F }\|f_{z}\|_{C^2([0,1]^{n-2})} \le \|\Sigma\|_{C^2([0,1]^{n-2})} \max_{z \in Z}|z| \lesssim 1
\end{equation}
 where the implicit constant only depends on $\Sigma$, (1) holds.

Fix $y,z \in Z$. We show (3) in Definition \ref{cine}. Similar as \eqref{cine1}, we have
\begin{equation}\label{dou1}
  \|f_y-f_{z}\|_{C^1([0,1]^{n-2})} \lesssim |y-z|.
\end{equation}
Write
$h:= f_y-f_{z}$. We prove (3) by showing
\begin{equation}\label{cine2}
  \inf_{x \in [0,1]^{n-2}, \xi \in S^{n-3}} \{ |h(x)| + |\nabla_\xi h(x)|  \} \gtrsim  |y-z|.
\end{equation}
We recall from Remark \ref{basis} that $w=\Sigma(x)$ and
$$  \{\tilde e_i(w):= e_i(\Sigma^{-1}(w))= \nabla \Sigma(x) \cdot \partial_i\}_{1\le i \le n-2} \mbox{ is a basis of $T_{w}\Sigma$}. $$
Define
$\tilde h  \in C^2(\Sigma)$ by
\begin{equation}\label{th}
  \tilde h(w):= ((y-z) \cdot \tilde e_1(w), \cdots,(y-z) \cdot \tilde e_{n-2}(w), \nu(w))  \quad w \in \Sigma.
\end{equation}
Thus $h=\tilde h \circ \Sigma$. For each $\xi \in S^{n-3}$, let $\xi_\ast:= \nabla \Sigma(x) \cdot \xi \in T_{\Sigma(x)} \Sigma$. We know for all $x \in [0,1]^{n-2}$ and $\xi \in S^{n-3}$,
\begin{equation}\label{rel3}
 \nabla_\xi h (x)= \langle\nabla h (x), \xi \rangle = \langle\nabla \tilde h (\Sigma(x)), \xi_\ast \rangle = \nabla_{\xi_\ast} \tilde h (w).
\end{equation}
and in particular,
\begin{equation} \label{chain}
  \nabla_{\partial_i} h (x) = \nabla_{\tilde e_i} \tilde h (w), \quad i =1, \cdots, n-2.
\end{equation}
Noting that $|\nabla \Sigma|$ is nowhere vanishing (since $\Sigma$ is a diffeomorphism), \eqref{rel3} implies that
\begin{equation}\label{rel4}
  |\nabla h (x)| \sim  |\nabla \tilde h (\Sigma(x))|, \quad x \in [0,1]^{n-2}
\end{equation}
where the implicit constant depends on $\max_{x \in [0,1]^{n-2}}\{|\nabla \Sigma(x)|\}$ and $\min_{x \in [0,1]^{n-2}}\{|\nabla \Sigma(x)|\}$.
Recalling again from Remark  \ref{basis} that
$y-z$ can be uniquely written as
$$  y-z =  a_0 \nu^\ast + a_1 \tilde e_1 + \cdots + a_{n-2} \tilde e_{n-2} + a_{n-1} \nu
$$
where $a_i\in \R$ for $i=0, \cdots, n-1$.
Moreover, recall that $\nabla_{\tilde e_i}\tilde e_j= \sum_{k=1}^{n-2}\Gamma_{i,j}^k \tilde e_k$ where $\Gamma_{i,j}^k$ are the Christoffel symbols determined by $\Sigma$. Let
\begin{equation}\label{M1}
  M_1:= \max_{w \in \Sigma}\max_{i,j,k=1, \cdots, n-2} |\Gamma_{i,j}^k(w)|.
\end{equation}
Let
$$  M_2 : = \frac{C_\Sigma}{16n M_1 \max\{1,\bar\kappa^2\}}$$
where $\bar\kappa$ is in \eqref{bdds} and $C_\Sigma$ is from Remark \ref{basis}.
We consider the following two cases separately.

\medskip
\emph{Case 1. $|a_i| \ge M_2|y-z|$ for some $i=1,\cdots, n-1$}. Then recalling \eqref{th},
we deduce that
$$ |h(x)| = |\tilde h (w)|  \ge   |(y-z) \cdot \tilde e_i| = |a_i| \ge M_2|y-z|  \sim |y-z|.$$
Thus \eqref{cine2} holds.

\medskip
\emph{Case 2. $|a_i| < M_2|y-z|$ for all $i=1,\cdots, n-1$}. Noting that $\Sigma_{i=0}^{n-1}a_i^2 \ge C_\Sigma |y-z|^2$, this implies that
\begin{equation}\label{a0}
  |a_0| \ge \frac12|y-z|.
\end{equation}
In this case, we show $|\nabla_\xi h(x)|\gtrsim |y-z|$. It suffices to check $\xi = \partial_i$ for $i=1, \cdots, n-2$, which, combining \eqref{chain} and \eqref{rel4}, further reduces to checking
$$   |\nabla_{\tilde e_i} \tilde h(w)|\gtrsim |y-z|, \quad  i= 1, \cdots,  n-2.   $$
By \eqref{M1}, we compute
\begin{align*}
  |(\nabla_{\tilde e_i}\tilde e_i)\cdot (y-z)| & \le \sum_{k=1}^{n-2} |\Gamma_{ii}^k \tilde e_k \cdot  (y-z)| \le \sum_{k=1}^{n-2} |\Gamma_{ii}^k a_k \tilde e_k| \le (n-2)M_1M_2|y-z|.
\end{align*}
Recalling $\tilde h$ in \eqref{th},
we have
\begin{equation*}
\nabla_{\tilde e_i} \tilde h (w)= (\nabla_{\tilde e_i}[(y-z) \cdot \tilde e_1], \cdots, \nabla_{\tilde e_i}[(y-z) \cdot \tilde e_{n-2}], \nabla_{\tilde e_i}[ (y-z) \cdot\nu]), \quad i=1, \cdots, n-2.
\end{equation*}
We compute
\begin{align*}
  |\nabla_{\tilde e_i} \tilde h(w)| & \ge |\nabla_{\tilde e_i}[(y-z) \cdot \tilde e_i]| =|[\nabla_{\tilde e_i}(y-z)]\cdot \tilde e_i + (\nabla_{\tilde e_i}\tilde e_i)\cdot (y-z)]|  \\
   & \ge  |\nabla_{\tilde e_i}[a_0\nu^\ast+[(y-z)-a_0\nu^\ast]]\cdot \tilde e_i| - |(\nabla_{\tilde e_i}\tilde e_i)\cdot(y-z)| \\
   & \ge |\nabla_{\tilde e_i}(a_0\nu^\ast)\cdot  \tilde e_i| - \sum_{j=1}^{n-2}|(\nabla_{\tilde e_i}a_je_j) \cdot \tilde e_i|- |(\nabla_{\tilde e_i}a_{n-1}\nu) \cdot \tilde e_i| - (n-2)M_1M_2|y-z|\\
   & \ge |a_0||\nabla_{\tilde e_i}\nu^\ast \cdot \tilde e_i| - \sum_{j=1}^{n-2}|a_j||(\nabla_{\tilde e_i}\tilde e_j) \cdot \tilde e_i| -|a_{n-1}||(\nabla_{\tilde e_i}\nu) \cdot \tilde e_i| - (n-2)M_1M_2|y-z|  \\
   & \ge \frac{1}{2 \bar \kappa}|y-z| - (n-2)M_1M_2 |y-z|- \bar \kappa M_2 |y-z| -(n-2)M_1M_2|y-z| \\
   & \ge \frac{1}{4 \bar \kappa}|y-z|
\end{align*}
where in the last two equations we use \eqref{a0}, \eqref{II} and \eqref{IIast}.
Thus \eqref{cine2} is true in this case. Combining two cases, we know (3) in Definition \ref{cine} holds.

\medskip

Finally, we show (2) in Definition \ref{cine}. Indeed, combining \eqref{dou1} and \eqref{cine2}, we deduce that
\begin{equation*}
 |y-z| \lesssim \inf_{x \in [0,1]^{n-2}, \xi \in S^{n-3}} \{ |h(x)| + |\nabla_\xi h(x)| \} \le  \|f_y-f_{z}\|_{C^1([0,1]^{n-1})} \lesssim |y-z|,
\end{equation*}
which implies that (2) in Definition \ref{cine} holds and
$F$ is a doubling space with the same doubling constant as $\mathbb{R}^{n}$. In particular, $F$ is biLipschitz homeomorphic to $Z$.

The proof is complete.
\end{proof}

Finally, Lemma \ref{pairvolume2} holds as a direct consequence of Lemma \ref{fg} and Lemma \ref{cineprop}.

\section{Proof of Theorem \ref{planemain}} \label{s3}
We show Theorem \ref{planemain} in this section. As explained in the introduction, we first prove Theorem \ref{thmconfig} and use Theorem \ref{thmconfig} to obtain Theorem \ref{planemain}.
To this end,
we introduce some definitions.

\begin{definition}[$(\delta,s)$-set] \label{dst}
Let $s \geq 0$, $C > 0$, $\delta> 0$ and $(E,d)$ be a metric space. A bounded set $A \subset E$ is called a \emph{$(\delta,s,C)$-set} if
\begin{displaymath}
|A \cap B(x,r)|_{\delta} \leq Cr^{s}|A|_{\delta}, \qquad x \in E, \, r \geq \delta. \end{displaymath}
  \end{definition}

\begin{remark} \label{deltaset}
  \rm We need following properties of $(\delta,s,C)$-set.

  (i) If $A$ is a non-empty $(\delta,s,C)$-set, then $|A|_{\delta} \geq C^{-1}\delta^{-s}$. This follows by applying the defining condition at scale $r = \delta$.

  (ii) If $A$ is a non-empty $(\delta,s,C)$-set and $A' \subset A$ satisfies $|A'|_{\delta} \ge D^{-1}|A|_{\delta}$ for some $D \ge 1$, then
  \begin{displaymath}
  |A' \cap B(x,r)|_{\delta} \leq |A \cap B(x,r)|_{\delta} \le Cr^{s}|A|_{\delta} \le CDr^{s}|A'|_{\delta}, \qquad x \in E, \, r \geq \delta, \end{displaymath}
  which implies that $A'$ is a $(\delta,s,CD)$-set.
\end{remark}

We then introduce the definition of a configuration.
In the following, for all $(f,p) \in C^2([0,1]^{n-2},[0,1]^{n-1}) \times [0,1]^{2n-3}$,
$$\Pi_{1} \colon C^2([0,1]^{n-2},[0,1]^{n-1}) \times [0,1]^{2n-3}  \to C^2([0,1]^{n-2},[0,1]^{n-1}), \quad \Pi_{1}(f,p) := f $$
and
$$\Pi_{2} \colon C^2([0,1]^{n-2},[0,1]^{n-1}) \times [0,1]^{2n-3} \to [0,1]^{2n-3} , \quad \Pi_{2}(f,p) := p $$
stands for the projection to the first variable and to the second variable respectively.

\begin{definition}[Configuration]\label{d:config}
Let $s,t \in (0,n-2]$, $C>0$, and $\delta> 0$. A \emph{$(\delta,s,t,C)$-configuration} is a set $\Omega \subset C^2([0,1]^{n-2},[0,1]^{n-1}) \times \mathbb{R}^{2n-3}$ such that
$$F := \Pi_{1}(\Omega)$$
 is a non-empty $\delta$-separated $(\delta,t,C)$-subset of $C^2([0,1]^{n-2},[0,1]^{n-1})$ consisting of a cinematic family of maps with
cinematic constant $K$ and doubling constant $D$, and for each $f \in F$
$$E(f) := \Pi_{2}((\{f\} \times \mathbb{R}^{2n-3}) \cap \Omega)$$
is the subset of $\mbox{graph}(f)$
such that
\begin{equation}\label{xf}
  E(f) = \{(x,f(x)) \in [0,1]^{2n-3}: x \in X_f\} = (X_f \times [0,1]^{n-1}) \cap \mbox{graph}(f)
\end{equation}
where $X_f$ is a non-empty $(\delta,s,C)$-subset of $[0,1]^{n-2}$.
Also, let
 \begin{displaymath} \label{cale}
 \mathcal{E} := \bigcup_{f \in F} E(f).
 \end{displaymath}
 Additionally, we require that the sets $X_f$ have constant $\delta$-covering number: there exists $M \geq 1$ such that $|X_f|_\delta = M \ge C^{-1}\delta^{-s}$ for all $f \in F$.
\end{definition}

We need some auxiliary lemmas.

\begin{lemma} \label{shape}
  Let $F$ be the same cinematic family in Lemma \ref{fg}. For $\epsilon>0$, $0<s \le n-2$ and $X \subset [0,1]^{n-2}$ being a $(\delta,s,\delta^{-\epsilon})$-set with $|X| \le \delta^{-s}$, Denote by $P_{f,g}$ the orthogonal projection of
  $f^\delta([0,1]^{n-2}) \cap g^\delta([0,1]^{n-2})$ to $[0,1]^{n-2}$. Then
  \begin{equation}\label{prfg}
   |X^\delta \cap P_{f,g}|_\delta \lesssim \delta^{-\epsilon} \frac{1}{\|f-g\|_{C^2([0,1]^{n-2})}^s}.
  \end{equation}
\end{lemma}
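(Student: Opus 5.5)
The plan is to reuse the localisation from the proof of Lemma \ref{fg}, which shows that $P_{f,g}$ sits inside a few small balls of radius $\sim \delta/d$, where $d:=\|f-g\|_{C^2([0,1]^{n-2})}$. The $(\delta,s,\delta^{-\epsilon})$-property of $X$ then bounds how many $\delta$-cells of $X^\delta$ can lie in such balls.

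First, the trivial regime. If $d \lesssim \delta$, the claimed bound is $\gtrsim \delta^{-\epsilon}\delta^{-s}$. Since $|X^\delta \cap P_{f,g}|_\delta \lesssim |X|_\delta \le \delta^{-s}$, there is nothing to prove. So assume $d > 4K\delta$, as in \eqref{ass1}.

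Next, localise. Decompose $[0,1]^{n-2}$ into dyadic cubes $U$ with $\diam U < \frac{1}{4K^2}$; there are $O_K(1)$ of them. For each $U$ with $f^\delta(U)\cap g^\delta(U)\neq\emptyset$, the argument in the proof of Lemma \ref{fg} goes as follows:
\begin{itemize}
\item Pick $x_U \in U$ with $|h(x_U)|\le \delta$, where $h=f-g$.
\item This forces alternative (A)(ii) of Corollary \ref{uniq}.
\item Corollary \ref{uniq}(B) then gives $|h(y)| \ge 4K\delta$ whenever $|y-x_U| > 16K^2\delta/d$.
\item The claim \eqref{cl1} excludes such $y$ from the projection.
\end{itemize}
This yields \eqref{ball}, namely $P_{f,g}\cap U \subset B(x_U, 16K^2\delta/d)$. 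Hence $P_{f,g}$ is covered by $O_K(1)$ balls $B(x_U, r)$ with $r := C_K\delta/d \ge \delta$. One subtlety needs care. The intersection over all of $[0,1]^{n-2}$ may involve points $(z,f(z))$ with $z$ in a neighbouring cube. I would handle this by enlarging $U$ by $\delta$, or by noting that the claim \eqref{cl1} only uses $z\in B(y,\delta)$ and Lipschitz bounds valid on all of $[0,1]^{n-2}$.

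Then count. A $\delta$-cell of $X^\delta$ meeting $B(x_U,r)$ lies in $X^{2\delta}\cap B(x_U,r+2\delta)$. Its covering number is $\lesssim |X\cap B(x_U, r+3\delta)|_\delta$. By the definition of a $(\delta,s,\delta^{-\epsilon})$-set this is
$$\le \delta^{-\epsilon}(r+3\delta)^s|X|_\delta \lesssim \delta^{-\epsilon}\Big(\frac{\delta}{d}\Big)^s \delta^{-s} = \delta^{-\epsilon} d^{-s},$$
using $|X|_\delta\le|X|\le\delta^{-s}$ and $r\gtrsim\delta$. Summing over the $O_K(1)$ cubes gives \eqref{prfg}.

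The main obstacle is the localisation step. It has to be stated cleanly for the global intersection $f^\delta([0,1]^{n-2})\cap g^\delta([0,1]^{n-2})$ rather than cube by cube. The counting itself is routine.
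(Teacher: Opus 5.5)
Your proposal is correct and follows the paper's proof. Both decompose into the dyadic cubes of Lemma \ref{fg}, use \eqref{ball} to place each $P_{f,g}^U$ in a ball of radius $\sim \delta/d$, apply the $(\delta,s,\delta^{-\epsilon})$-condition together with $|X|\le\delta^{-s}$, and sum over the $O_K(1)$ cubes. Your extra care only makes explicit steps the paper leaves implicit: the trivial regime $d\le 4K\delta$, where \eqref{ball} is unavailable, and the passage from $X^\delta$ to $X\cap B(x_U,r+O(\delta))$. Also, with the vertical neighbourhoods of \eqref{vertical} one has exactly $P_{f,g}=\{x : |f(x)-g(x)|<2\delta\}$, so the cross-cube subtlety you flag does not actually arise.
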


\begin{proof}
  Let $d=\|f-g\|_{C^2([0,1]^{n-2})}$ and
  $\calU$ be the collection of dyadic cubes $U \subset [0,1]^{n-2}$ from the decomposition of $[0,1]^{n-2}$ in Lemma \ref{fg}. We infer that
  $$  P_{f,g} = \bigcup_{U \in \calU} P_{f,g}^U $$
  where $P_{f,g}^U = P_{f,g} \cap [0,1]^{n-2}$.

  To prove the lemma, it suffices to show
  \begin{equation}\label{covering1}
    |X^\delta \cap P_{f,g}^U|_\delta \lesssim \delta^{-\epsilon} \frac{\delta^{-(n-2)}}{{d}^s}
  \end{equation}
  for each $U \in \calU$.

  Recalling from \eqref{ball} that $P_{f,g}^U \subset  B(x,16K^2\delta/t)$ for some $x \in [0,1]^{n-2}$, we deduce that
  \begin{align*}
  |X^\delta \cap P_{f,g}^U|_\delta & \le  |X^\delta \cap B(x, 16K^2\frac{\delta}{ d })|_\delta \le \delta^{-\epsilon}( 16K^2\frac{\delta}{ d })^{s} |X|  \\
   & \lesssim \delta^{-\epsilon} \frac{\delta^s}{d^s}\delta^{-s} \le \delta^{-\epsilon} \frac{1}{\|f-g\|_{C^2([0,1]^{n-1})}^s},
\end{align*}
which gives \eqref{covering1} as desired.
\end{proof}

Recall that for any set $A \subset \R^l$, $A^\delta$ is the $\delta$-neighbourhood of $A$ in $\R^{l}$. For sets $E(f)$ defined in \eqref{xf}, we write $E^\delta(f)$ instead of $E(f)^\delta$.

\begin{lemma} \label{enbhd} For any $\epsilon>0$ and $\delta >0$, we have
  \begin{equation}\label{3ez}
    \mathcal{L}^{2n-3}(E^\delta(f) \cap E^\delta(g)) \lesssim \delta^{-\epsilon} \frac{\delta^{2n-3}}{\|f-g\|_{C^2([0,1]^{n-2})}^s}.
  \end{equation}
\end{lemma}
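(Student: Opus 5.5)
We will reduce the estimate for $E^\delta(f)\cap E^\delta(g)$ to the covering bound of Lemma \ref{shape} combined with the vertical-neighbourhood volume bound of Lemma \ref{nbhd}. Here $E(f)$, $E(g)$ come from a configuration with $X_f$, $X_g$ being $(\delta,s,\delta^{-\epsilon})$-sets of cardinality (or covering number) at most $\delta^{-s}$.

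\emph{Step 1: localize in the base.} The first observation is that $E^\delta(f)\subset f^{C\delta}(X_f^{C\delta})$ for some $C\lesssim K$. Indeed, if $p=(x,y)$ is within $\delta$ of $(x',f(x'))$ with $x'\in X_f$, then $|x-x'|<\delta$ and $|y-f(x)|\le |y-f(x')|+K|x-x'|\lesssim\delta$. Hence
$$E^\delta(f)\cap E^\delta(g)\subset f^{C\delta}\big(X_f^{C\delta}\big)\cap g^{C\delta}\big(X_g^{C\delta}\big)\subset \big(f^{C\delta}\cap g^{C\delta}\big)\cap \big(X_f^{C\delta}\times[0,1]^{n-1}\big).$$
Consequently the projection of the intersection to $[0,1]^{n-2}$ lies in $X_f^{C\delta}\cap P^{C\delta}_{f,g}$, where $P^{C\delta}_{f,g}$ is the projection of $f^{C\delta}\cap g^{C\delta}$.

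\emph{Step 2: count base cubes.} Next we apply Lemma \ref{shape}, with $\delta$ replaced by $C\delta$; this only changes constants, both in the $(\delta,s)$ property and in the ball $B(x,16K^2C\delta/d)$ from \eqref{ball}. This gives
$$|X_f^{C\delta}\cap P^{C\delta}_{f,g}|_\delta\lesssim \delta^{-\epsilon}d^{-s},\qquad d=\|f-g\|_{C^2}.$$
This requires $d\gtrsim\delta$, which holds since $F$ is $\delta$-separated; if $d\lesssim\delta$ the bound is trivial. To see this, note that $E^\delta(f)$ is covered by about $|X_f|_\delta\le\delta^{-s}$ balls of volume $\delta^{2n-3}$, giving total volume $\lesssim \delta^{2n-3-s}\sim \delta^{2n-3}d^{-s}$.

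\emph{Step 3: volume over each base cube.} Over each $\delta$-cube $Q$ in the cover from Step 2, Lemma \ref{nbhd} gives
$$\mathcal L^{2n-3}\big(f^{C\delta}(Q)\big)\lesssim \delta^{n-1}\,\|f\|_{C^1}\,\delta^{n-2}\lesssim\delta^{2n-3}.$$
Summing over the $\lesssim\delta^{-\epsilon}d^{-s}$ cubes yields \eqref{3ez}.

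The main obstacle is Step 2. The displayed bound \eqref{prfg} as literally stated reads as a count, but the proof of \eqref{covering1} carries a stray $\delta^{-(n-2)}$ and uses $t$ in place of $d$. I would re-derive the count directly rather than rely on that text. Inside each dyadic cube $U$, \eqref{ball} places $P_{f,g}^U$ in a ball of radius $\sim\delta/d$, and the $(\delta,s,\delta^{-\epsilon})$-property of $X_f$ then bounds the number of $\delta$-cubes by $\delta^{-\epsilon}(\delta/d)^s|X_f|_\delta\le\delta^{-\epsilon}d^{-s}$. Finally we sum over the $O_K(1)$ cubes $U$.
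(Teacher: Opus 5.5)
Your proof is correct and follows the paper's own route. You project $E^\delta(f)\cap E^\delta(g)$ to the base, bound the $\delta$-covering number of that projection through $X_f^{\delta}\cap P_{f,g}$ and Lemma \ref{shape} (the ball from \eqref{ball} plus the $(\delta,s,\delta^{-\epsilon})$-property of $X_f$), and then multiply by the fibre volume $\delta^{n-1}\cdot\delta^{n-2}$ from Lemma \ref{nbhd}. Your enlargement to $C\delta$-neighbourhoods is a worthwhile precision: the Euclidean $\delta$-neighbourhood of $E(f)$ only lies in the vertical $(1+K)\delta$-neighbourhood of $\mathrm{graph}(f)$, which the paper's inclusion $P_{E^\delta(f)\cap E^\delta(g)}\subset X_f^\delta\cap P_{f,g}$ glosses over, and your reading of the typos in the proof of Lemma \ref{shape} is accurate.
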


\begin{proof}
Write $\|f-g\|_{C^2([0,1]^{n-2})} = d$. Thanks to Lemma \ref{nbhd}, letting $P_{E^\delta(f) \cap E^\delta(g)}$ be the orthogonal projection of $E^\delta(f) \cap E^\delta(g)$ to $[0,1]^{n-2}$,
it suffices to show,
\begin{equation}
    \mathcal{L}^{n-2}(P_{E^\delta(f) \cap E^\delta(g)}) \lesssim \delta^{-\epsilon} \frac{\delta^{n-2}}{d^s}.
  \end{equation}
Noting that $\delta$-balls in $\mathbb{R}^{n-2}$ has measure $\sim \delta^{n-2}$, it further suffices to show,
   $$ |P_{E^\delta(f) \cap E^\delta(g)}|_{\delta} \lesssim \delta^{-\epsilon} \frac{1}{d^s}. $$
   Observing that
   $$P_{E^\delta(f) \cap E^\delta(g)} \subset X_f^\delta \cap P_{f,g} $$
   where $P_{f,g}$ is as in Lemma \ref{shape},
   applying Lemma \ref{shape}, we complete the proof.
\end{proof}

The next theorem provides a lower bound of configurations, which implies Theorem \ref{thmconfig}.

\begin{thm} \label{deltaconfig}
  Let $0<s  \le s' \le n-2$, $\epsilon>0$. Then there exists $\delta_0=\delta_0(\epsilon)>0$ such that the following holds for all $0 < \delta < \delta_0$: for any $(\delta,s',s,\delta^{-\epsilon})$-configuration $\Omega$,
  $$   |\calE|_{\delta} \gtrsim \delta^{7\epsilon-s-s'}$$
  where $\calE$ is in \eqref{cale}.
\end{thm}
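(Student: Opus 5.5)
The plan is the standard $L^2$ (Cauchy--Schwarz) argument for unions of neighbourhoods, with the pairwise overlap bound supplied by Lemma \ref{enbhd}. Here $F=\Pi_1(\Omega)$ is a $\delta$-separated $(\delta,s,\delta^{-\epsilon})$-set, and each $X_f$ is a $(\delta,s',\delta^{-\epsilon})$-set with $|X_f|_\delta=M\ge \delta^{\epsilon-s'}$. Since $|\calE|_\delta\sim \calL^{2n-3}(\calE^\delta)\,\delta^{-(2n-3)}$, it suffices to show
$$\calL^{2n-3}\Big(\bigcup_{f\in F}E^\delta(f)\Big)\gtrsim \delta^{2n-3+7\epsilon-s-s'} .$$

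\emph{Step 1: first moment.} Put $\mu:=\sum_{f\in F}\mathbf 1_{E^\delta(f)}$. Because $f$ is Lipschitz with constant $\lesssim K$, the graph points over a maximal $\delta$-separated subset of $X_f$ are $\gtrsim\delta$-separated in $[0,1]^{2n-3}$. Hence $\calL^{2n-3}(E^\delta(f))\gtrsim M\delta^{2n-3}$, and so
$$\int\mu\gtrsim |F|\,M\,\delta^{2n-3}.$$

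\emph{Step 2: second moment.} Write $\int\mu^2=\sum_{f,g}\calL^{2n-3}(E^\delta(f)\cap E^\delta(g))$.
\begin{itemize}
\item The diagonal terms contribute $\lesssim |F|M\delta^{2n-3}$, using Lemma \ref{nbhd} on each $\delta$-cube.
\item For $f\neq g$, we have $d(f,g):=\|f-g\|_{C^2}\ge\delta$ by $\delta$-separation. Lemma \ref{enbhd}, applied with exponent $s'$ (the dimension of $X_f$), gives the bound $\delta^{-\epsilon}\delta^{2n-3}d(f,g)^{-s'}$.
\end{itemize}
Fix $f$ and sort the $g$ into dyadic shells $d(f,g)\sim 2^{-j}$ with $2^{-j}\ge\delta$. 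The $(\delta,s,\delta^{-\epsilon})$ property of $F$ puts at most $\lesssim\delta^{-\epsilon}2^{-js}|F|$ maps in each shell. So the shell contributes $\lesssim\delta^{2n-3-2\epsilon}|F|\,2^{j(s'-s)}$. Since $s\le s'$, summing over shells gives at most $\log(1/\delta)\,\delta^{s-s'}$ times the top term. Therefore
$$\int\mu^2\lesssim |F|M\delta^{2n-3}+|F|^2\delta^{2n-3-3\epsilon+s-s'} .$$

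\emph{Step 3: conclusion.} Cauchy--Schwarz gives $\calL^{2n-3}(\{\mu>0\})\ge(\int\mu)^2/\int\mu^2$. We split according to which term in the bound for $\int\mu^2$ dominates.
\begin{itemize}
\item If the diagonal term dominates, the measure is $\gtrsim |F|M\delta^{2n-3}\ge\delta^{2\epsilon-s-s'}\delta^{2n-3}$, using $|F|\ge\delta^{\epsilon-s}$ from Remark \ref{deltaset}(i).
\item Otherwise the measure is $\gtrsim M^2\delta^{2n-3+3\epsilon-s+s'}\ge\delta^{2n-3+5\epsilon-s-s'}$.
\end{itemize}
Both cases are within the claimed $\delta^{7\epsilon}$ loss once $\delta_0$ is small enough to absorb the logarithm and the implicit constants.

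I expect the main obstacle to be the bookkeeping in Step 2. One must confirm that Lemma \ref{enbhd} (through Lemma \ref{shape}) really applies with the exponent $s'$ of $X_f$, which is the content of its proof. One must also confirm that the dyadic counting uses the doubling structure of $F$ and the $\delta$-separation, so that no pair has $d<\delta$. The assumption $s\le s'$ is exactly what makes the dyadic sum concentrate at the smallest scale $d\sim\delta$, and this is where the exponents combine to $s+s'$. A secondary point is the lower bound $\calL^{2n-3}(E^\delta(f))\gtrsim M\delta^{2n-3}$ in Step 1, which needs the graph to be Lipschitz; cinematic property (1) provides this.
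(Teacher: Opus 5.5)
Your proof is correct and is essentially the paper's argument: Cauchy--Schwarz for $\calL^{2n-3}(\bigcup_{f}E^\delta(f))$, the pair bound of Lemma \ref{enbhd} with exponent $s'$, and dyadic shells around each $f$ counted with the $(\delta,s,\delta^{-\epsilon})$ property of $F$, where $s\le s'$ makes the shell sum peak at $d\sim\delta$. Your case split on which term of $\int\mu^2$ dominates is cleaner than the paper's unexplained reduction to $|F|\lesssim\delta^{\epsilon-s}$, since $|F|$ cancels in the off-diagonal case.

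One caveat, which the paper shares: Lemma \ref{enbhd} goes through Lemma \ref{shape}, whose proof uses $|X_f|\le\delta^{-s'}$, and Definition \ref{d:config} does not guarantee this. Without that hypothesis, what the proof actually gives is $\calL^{2n-3}(E^\delta(f)\cap E^\delta(g))\lesssim\delta^{-\epsilon}\delta^{2n-3}M(\delta/d)^{s'}$. If you carry the extra factor $M\delta^{s'}$ through your Step 3, the argument still works for arbitrary $M$: the off-diagonal case then gives $M\delta^{2n-3+3\epsilon-s}\ge\delta^{2n-3+4\epsilon-s-s'}$.
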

\begin{proof}
  Let $C$ be the implicit constant in \eqref{3ez}. The relationship between $\delta_0=\delta_0(\epsilon)$ and $\epsilon$ in this theorem is
  \begin{equation}\label{4ez}
    \log(1/\delta_0) < \delta_0^{-\epsilon}.
  \end{equation}
  Let $\mathcal{E}^\delta$ be the $\delta$-neighbourhood of $\mathcal{E}$. Then it suffices to show $$\mathcal{L}^n(\mathcal{E}^\delta) \gtrsim \delta^{16\epsilon +2n-3- s-s'}.$$
  Recall that
  $$  \mathcal{E}^\delta =  \bigcup_{f \in F} E^\delta(f).$$
  Since we are finding a lower bound, it suffices to assume $|F|$ have an upper bound. Recalling that $F$ is a $\delta$-separated $(\delta,s,\delta^{-\epsilon})$-set, we can assume
  \begin{equation}\label{cardf}
    \delta^{\epsilon} \delta^{-s}\lesssim |F|\lesssim \delta^{\epsilon} \delta^{-s}
  \end{equation}
  in the rest of the proof.
  Using Cauthy-Schwartz type inequality for Lebsegue measure (c.f. \cite[Lemma4.2]{LIU2025}), we have
  \begin{equation}\label{cs}
    \mathcal{L}^{2n-3}(\mathcal{E}^\delta) =  \mathcal{L}^{2n-3}(\bigcup_{f \in F} E^\delta(f)) \ge \frac{[\sum_{f \in F}\mathcal{L}^{2n-3}( E^\delta(f))]^2}{\sum_{f ,g\in F}\mathcal{L}^{2n-3}( E^\delta(f)\cap  E^\delta(g))}.
  \end{equation}
  Recalling $|E^\delta(f)|_{\delta}=M \gtrsim \delta^{\epsilon-s'}$, we have
  $ \mathcal{L}^{2n-3}(E^\delta(f)) \sim M \delta^{2n-3} \gtrsim \delta^{\epsilon} \delta^{2n-3-s'}$ for all $f \in F$. Combining \eqref{cardf}, we deduce that
  $$ [\sum_{f \in F}\mathcal{L}^{2n-3}( E^\delta(f))]^2 \gtrsim [ \delta^{2\epsilon +2n-3 -s-s'} ]^2.  $$
  It suffices to show
  \begin{equation}\label{e2}
    \sum_{f ,g\in F}\mathcal{L}^{2n-3}( E^\delta(f)\cap  E^\delta(g)) \lesssim \delta^{-3\epsilon}\delta^{2n-3-s-s'}
  \end{equation}
  For each $f \in F$ and $i \in  \mathbb{N}$, define
   $$  F_i(f):= \{ g \in F : \|f-g\|_{C^2([0,1]^{n-2})}  \in (2^{-(i+1)},2^{-i}]  \} . $$
   Note that if $i \ge \log (1/\delta)$, then $F_i(f) =\{f\}$ due to $F$ being $\delta$-separated.
   Since $F$ is a $\delta$-separated $(\delta,s,\delta^{-\epsilon})$-set, we have $|F_i(f)| \le 2^{-is}|F|$.
   Using Lemma \ref{enbhd}, we have
   \begin{align}\label{l23}
    \sum_{f \in F}\sum_{g \in F, f\ne g}  \mathcal{L}^{2n-3}( E^\delta(f)\cap  E^\delta(g)) &= \sum_{f \in F}\sum_{i =1}^{\log (1/\delta)}\sum_{g \in F_i(f)}  \mathcal{L}^{2n-3}( E^\delta(f)\cap  E^\delta(g)) \notag\\
    &\overset{\eqref{3ez}}{\lesssim} \delta^{-\epsilon} \sum_{f \in F}\sum_{i=1}^{\log (1/\delta)}2^{-is}|F| \frac{\delta^{2n-3}}{2^{-(i+1)s'}} \notag \\
    & \lesssim  \delta^{-\epsilon}|F|\delta^{2n-3}\sum_{f \in F} \sum_{i=1}^{\log (1/\delta)} 2^{(s'-s)i} \notag\\
    & \lesssim \delta^{-\epsilon}|F|^2 \delta^{2n-3} \log (1/\delta)\delta^{s-s'}
     \le  \delta^{-2\epsilon}\delta^{2n-3-s-s'}
   \end{align}
   where in the second last inequality we note that $s'\ge s$ and $$\sum_{i=1}^{\log (1/\delta)} 2^{(s'-s)i} \le \log (1/\delta) 2^{(s'-s)\log (1/\delta) } = \log (1/\delta) \delta^{s-s'},$$
   and in the last inequality we recall $|F| \le \delta^{-s}$ and \eqref{4ez}.
   Finally, noticing that
   $$  \sum_{f ,g\in F}\mathcal{L}^{2n-3}( E^\delta(f)\cap  E^\delta(g)) = \sum_{f \in F} \mathcal{L}^{2n-3}( E^\delta(f))  + \sum_{f \in F}\sum_{g \in F, g \ne f}  \mathcal{L}^{2n-3}(E^\delta(f)\cap  E^\delta(g))  $$
   and
   $$  \sum_{f \in F} \mathcal{L}^{2n-3}( E^\delta(f)) \lesssim|F|  \delta^{2n-3} M \le \delta^{2n-3-s-s'} \le  \delta^{2n-3-s-s'},  $$
   combining \eqref{l23}, we conclude \eqref{e2} as desired.
\end{proof}

We are ready to show Theorem \ref{planemain}.

\begin{proof}[Proof of Theorem \ref{planemain}]
    Let $Z \subset \R^n$ be analytic with $\dim Z \le n-2$. Since we can always find a sequence of compact subsets $Z_i \subset Z$ such that $\lim_{i \to \infty}\dim Z_i=\dim Z$ and the dimension of the orthogonal projection of a set is invariant under scalings and translations. We could assume $Z \subset B(o,1/2)$.
    Thanks to Proposition \ref{cineprop}, we know that maps $\{f_z\}_{z \in Z}$ defined in \eqref{fw} form a cinematic family of maps.
     We then prove by contradiction. Assume there exists $0<s< t=\dim Z$ and $X_s \subset [0,1]^{n-1}$ such that
  $$ \dim X_s > s $$
  and
  \begin{equation}\label{sdim}
    \dim(\pi_{T_{\Sigma(x)}S^{n-1}}(Z) ) < s, \quad x \in X_s.
  \end{equation}
   We further formulate this statement in a discrete way.
  Fix $s'\in (s, \min\{\dim X_s, t\})$.
  Then the $s'$-dimensional Hausdorff content $H: = \mathcal{H}^{s'}_\infty(Z) >0$
  and
  $$\mathcal{H}^{s'}_\infty( X_s ) >0. $$
  Moreover, by \eqref{sdim}, we know
  $$ \mathcal{H}^{s}(\pi_{T_{\Sigma(x)}S^{n-1}}(Z)) =0, \quad x \in X_s.   $$
  For $\epsilon>0$ and $\delta_0=\delta_0(\epsilon)$, we require Theorem \ref{deltaconfig} holds for all $\delta< \delta_0$. Moreover, by decreasing $\delta_0$, we also require that
  \begin{equation}\label{choiceez}
   \epsilon < \frac{s'-s}{24} \mbox{ and } \delta_0^\epsilon < \min\{ [\log(1/\delta_0)]^{-2}, C_1^{-1}, C_2^{-1}\}
  \end{equation}
  where $C_1,C_2>1$ are the two implicit constant appeared in \eqref{lob1} and \eqref{lob2} respectively.

Fix $t' \in(s',t)$. Thanks to Frostman's lemma, we can find a probability measure $\mu$ with spt$\mu\subset Z$ satisfying $\mu (B(z,r)) <  r^{t'}$ for all $z \in \R^n$ and $r>0$. For each $x \in X_s$, denote by $\mu_x$ the push-forward of $\mu$ under $\pi_{T_{\Sigma(x)}S^{n-1}}$. Thus $\mu_x$ is a probability measure supported on
$\pi_{T_{\Sigma(x)}S^{n-1}}(Z)$, and recalling \eqref{sdim}, we deduce that
$$\calH^{s}({\rm spt}\mu_x)  \le \calH^{s}(\pi_{T_{\Sigma(x)}S^{n-1}}(Z))=0.$$
   Fix $k_0 \in\mathbb{N}$ such that $2^{-k_0} < \delta_0$. For each $x \in X_s$, let $\mathcal{U}_x = \{ B_{x,i}^{n-1}\}_{i \in\mathcal{I}_x}$ be a cover of $\pi_{T_{\Sigma(x)}S^{n-1}}(Z)$($\subset\R^{n-1}=T_{\Sigma(x)}S^{n-1}$)
by $(n-1)$-dimensional balls with dyadic radii $\le 2^{-k_0}$ for all $i \in \mathcal{Q}_x$ such that
$$  \sum_{i \in \mathcal{I}_x}\mu_x(B_{x,i}^{n-1}) =1 \mbox{ and }  \sum_{i \in \mathcal{I}_x} |\diam B_{x,i}^{n-1}|^{s} \le 1.     $$
Write $\mathcal{I}_x^k:= \{ i \in \mathcal{I}_x : | I_x^i|\in [2^{-k},2^{-(k-1)})\}$ for all $k \ge k_0$.
By pigeonhole principle, for each $x \in X_s$, there exists $k_x \ge k_0$ such that
$$    \sum_{i \in \mathcal{I}_x^{k_x}}\mu_{x}(B_{x,i}^{n-1})  \gtrsim k_x^{-2}\mu_{x}({\rm spt}\mu_{x} ) = k_x^{-2} \mbox{ and } |\mathcal{I}_x^{k_x}|  \overset{\eqref{sdim}}{\le} 2^{sk_x}.  $$
By using a second pigeonhole principle, we can find $k_1 \ge k_0$, and a subset $\bar X_s \subset X_s$ such that
  $$  \mathcal{H}^{s'}_\infty(\bar X_s)  \gtrsim k_1^{-2} \mbox{ and } k_x \equiv k_1 \quad x \in \bar X_s.  $$
Let $\delta: = 2^{-k_1}$.
  By \cite[Lemma 2.7]{orponen2023hausdorff}, we know that there exists a $\delta$-separated $(\delta,s',C)$-set $Y_s \subset \bar X_s$ satisfying
  \begin{equation}\label{cardy}
    \delta^{-s'} \lesssim |Y_s| \leq \delta^{-s'}.
  \end{equation}
  As a result, we have
  \begin{equation}\label{upb2}
    \sum_{i \in \mathcal{I}_x^{k_1}}\mu_{x}(B_{x,i}^{n-1}) \gtrsim k_1^{-2} = [\log(1/\delta)]^{-2}  \mbox{ and } |\mathcal{I}_x^{k_1}|  \le \delta^{-s} \quad x \in Y_s.
  \end{equation}
For each $x\in Y_s$, we relate $Z$ to the set $\cup_{i \in \mathcal{I}_x^{k_1}}B_{x,i}^{n-1}$. Let $\calQ(n,\delta)$ be the family of $n$-cubes $Q^n \subset\R^n$ with side length $\delta$.
Define
\begin{equation}\label{qxk}
  \calQ_x : = \{Q^n \in \calQ(n,\delta): \mu(Q^n \cap \pi_{T_{\Sigma(x)}S^{n-1}}^{-1}(\cup_{i \in \mathcal{I}_x^{k_1}}B_{x,i}^{n-1})) >0\},
\end{equation}
and
\begin{equation}\label{zxk}
  \calZ_x : =  \bigcup_{Q^n \in \calQ_x}Q^n \subset \R^n, \quad \forall x \in Y_s.
\end{equation}
We have
$$ \pi_{T_{\Sigma(x)}S^{n-1}}^{-1}(\cup_{i \in \mathcal{I}_x^{k_1}}B_{x,i}^{n-1})) \subset \calZ_x $$
and by the first inequality in \eqref{upb2}, we deduce that
\begin{equation}\label{muzx}
  \mu(\calZ_x) \ge  \mu(\pi_{T_{\Sigma(x)}S^{n-1}}^{-1}(\cup_{i \in \mathcal{I}_x^{k_1}}B_{x,i}^{n-1})) = \mu_x(\cup_{i \in \mathcal{I}_x^{k_1}}B_{x,i}^{n-1}))\gtrsim [\log(1/\delta)]^{-2}, \quad \forall x \in Y_s.
\end{equation}
On the other hand, since each ball $B_{x,i}^{n-1}$ with $i \in \mathcal{I}_x^{k_1}$ has radius $2^{-k_1}=\delta$ and $\calZ_x$ is a union of $\delta$-cubes, we know
\begin{equation}\label{inclzx}
  \pi_{T_{\Sigma(x)}S^{n-1}}(\calZ_x) \subset \bigcup_{i \in \mathcal{I}_x^{k_1}} 2B_{x,i}^{n-1}
\end{equation}
where $2B_{x,i}^{n-1}$ is the ball with same center and twice the radius as $B_{x,i}^{n-1}$.

Let
$$ \calQ:= \bigcup_{x \in Y_s}\calQ_x \mbox{ and } \calZ:= \bigcup_{x \in Y_s}\calZ_x.   $$
We have
\begin{align*}
  \sum_{Q^n \in \calQ}\mu(Q^n)|\{x & \in Y_s: Q^n \in \calQ_x\}| =  \sum_{x \in Y_s} \mu(\calZ_x)\\
  &\overset{\eqref{cardy}}{\gtrsim} \delta^{-s'} \min_{x \in Y_s}\{\mu(\calZ_x)\}
    \overset{\eqref{muzx}}{\gtrsim} \delta^{-s'}[\log(1/\delta)]^{-2}\overset{\eqref{choiceez}}{\gtrsim} \delta^{\epsilon}\delta^{-s'}.
\end{align*}
Since $\{x  \in Y_s: Q^n \in \calQ_x\} \subset Y_s$, we have
$$  |\{x  \in Y_s: Q^n \in \calQ_x\}| \le |Y_s| \overset{\eqref{cardy}} \le \delta^{-s'}, \quad \forall x \in Y_s.  $$
Combining the above two estimates, we infer that there exists a subfamily $\widetilde{\calQ} \subset \calQ$ with
$$ \mu(\widetilde{\calZ}) \gtrsim \delta^{2\epsilon} \mbox{ where $\widetilde{\calZ}:= \bigcup_{Q^n \in \widetilde{\calQ}}Q^n \subset \R^n$}$$
such that each $Q^n \in \widetilde{\calQ}$ satisfies
\begin{equation}\label{cardysz}
   |\{x  \in Y_s: Q^n \in \calQ_x\}| \gtrsim \delta^{2\epsilon}\delta^{-s'}.
\end{equation}
Recalling that spt$\mu=Z$, we have $\mu(\widetilde{\calZ} \cap Z)= \mu(\widetilde{\calZ}) \gtrsim \delta^{2\epsilon}$. Since $\mu$ is a $t'$-dimensional Frostman measure and $s' < t'$, we deduce that
$$\calH_\infty^{s'}(\widetilde{\calZ} \cap Z) \gtrsim \calH_\infty^{t'}(\widetilde{\calZ} \cap Z) \gtrsim  \delta^{2\epsilon}.$$
 Thanks to \cite[Lemma 2.7]{orponen2023hausdorff}, we can find a $\delta$-separated $(\delta,s',\delta^{-2\epsilon})$-set $\widetilde{Z} \subset \widetilde{\calZ} \cap Z$ with $|\widetilde{Z}| \gtrsim \delta^{2\epsilon} \delta^{-s'}$.
 For each $z \in \widetilde{Z}$, there exists a $Q^n_z \subset \widetilde{Q}$ such that $z \in Q^n_z$. Define
 $$ Y_{s}(z):= \{x  \in Y_s: Q^n_z \in \calQ_x\} \subset Y_s, \quad \forall z\in \widetilde{Z}. $$
 By \eqref{cardysz}, we have
 $$ |Y_{s}(z)|  \gtrsim  \delta^{2\epsilon}\delta^{-s'} \gtrsim \delta^{2\epsilon}|Y|. $$
 In particular, recalling Remark \ref{deltaset} (ii), $Y_{s}(z)$ a is $(\delta,s',\delta^{-3\epsilon})$-set.

 To apply Theorem \ref{deltaconfig}, we construct a $(\delta,s',s',\delta^{-3\epsilon})$-configuration.
 For $z \in \widetilde{Z}$, let
 $$   E(f_z):=\bigcup_{x \in Y_{s}(z)} (x, f_z(x))  \subset [0,1]^{2n-3}.$$
 and
 $$ \Omega:= \bigcup_{z \in \widetilde{Z}} (f_z,E(f_z)).$$
 Then $\Omega$ is a $(\delta,s',s',\delta^{-3\epsilon})$-configuration.
 Recall $\mathcal{E} := \bigcup_{z \in \widetilde{Z}} E(f_z)$. Applying Theorem \ref{deltaconfig}, we know
 \begin{equation}\label{lob1}
   |\mathcal{E}|_\delta  \ge C_1^{-1}\delta^{21\epsilon-2s'}
 \end{equation}
 for some $C_1>1$.

On the other hand,
noting that $x\in Y_s(z)$ implies $z\in Q_z^n \subset \calZ_x$, we have
$$ \mathcal{E} = \bigcup_{z \in \widetilde{Z}} \bigcup_{x \in Y_s(z)}(x,f_{z}(x)) \subset \bigcup_{x \in Y} \bigcup_{z \in \calZ_x} (x,f_{z}(x)) = \bigcup_{x \in Y} \bigcup_{z \in \calZ_x} (x,\pi_{T_{\Sigma(x)}S^{n-1}}(z)) = \bigcup_{x \in Y}\pi_{T_{\Sigma(x)}S^{n-1}}(\calZ_x).$$
Recalling \eqref{upb2} and \eqref{inclzx},
we deduce
\begin{equation}\label{lob2}
  |\mathcal{E}|_\delta \le \sum_{x \in Y} |\pi_{T_{\Sigma(x)}S^{n-1}}(\calZ_x)|_\delta \le C_2|Y|\delta^{-s}\le C_2\delta^{-s-s'} .
\end{equation}
This contradicts to \eqref{lob1}, since $\epsilon < 24^{-1}(s'-s)$ by our choice of $\epsilon$ in \eqref{choiceez}. The proof is complete.
\end{proof}

\section{Line-cone Incidence Estimate}\label{s4}

We prove Theorem \ref{lineconen} in this section and we recall the statement of Theorem \ref{lineconen} here.

\medskip
\noindent
{\bf Theorem 1.28.}
Let $\Sigma$ be as in Theorem \ref{cormain}.
If a line $L \subset \R^n$ satisfies $\min_{p \in \calC_z}\{\angle(L,T_p\calC_z)\}  \ge a$ for some $a \gg \delta >0$, then
  $$\calL^{n}(\calC_z^\delta \cap L^\delta) \lesssim_a \delta^n $$
  where we recall the cone $\calC_z$ with apex $z$ and cross-section $\Sigma$ is the set
$$\calC_z = \bigcup_{p \in \Sigma} L_{z,p} = \bigcup_{p \in \Sigma, r \in [-1,1]} z + rp \subset \R^n.$$

Before presenting the proof, we make several reductions. First, we can partition $\Sigma$ into small pieces $\Sigma_i$ and show Theorem \ref{lineconen} holds for each cone with cross-section $\Sigma_i$ sufficiently small. Thus in the following of this section,
by partitioning $\Sigma$ into small pieces $\Sigma_i$, we can assume each $\Sigma_i: [0,1]^{n-2} \to S^{n-1}$ has diameter $<\frac{\pi}{100}$. In particular, for any $x,y \in \Sigma_i$, $\angle(x,y) < \frac{\pi}{100}$. Moreover, we require that the smallness of $\Sigma_i$ also guarantees the validity of Lemma \ref{smalla} below.
In the following, we focus on one piece $\Sigma_i$ and write $\Sigma=\Sigma_i$.

Second, noting that
$$ \calL^{n}(\calC_z^\delta \cap L^\delta) = \calL^{n}[(\calC_z^\delta-\{z\}) \cap (L^\delta-\{z\})] = \calL^{n}[\calC_o^\delta \cap (L^\delta-\{z\})], $$
it suffices to show Theorem \ref{lineconen} holds for the cone $\calC_o$ with apex as the origin $o$. In the following, we write $\calC_o = \calC$.

Third, noting that we can decompose $\calC$ as $\calC_+ \cup \calC_-$ where
$$ \calC_\pm:=  \bigcup_{p \in \Sigma, r \in [0,1]}  \pm rp, $$
to show Theorem \ref{lineconen}, it suffices to estimate $\calL^{n}(\calC^\delta_\pm \cap L^\delta)$ separately. Since the proof will be the same for both $\calC_\pm$, we will only focus on $\calC_+$ and write $\calC_+=\calC$ and $L_{o,p}=\cup_{r \in [0,1]}rp$ throughout the rest of this section.
Moreover,
since $\calL^{n}(\calC^\delta \cap L^\delta)$ is invariant under any isometry of $\R^n$, we can always rotate $\calC^\delta$ with respect to $o$ such that one generatrix of $\calC$ is the direction of the first axis $e_1$ and the tangent space of this generatrix is span$\{e_1, \cdots, e_{n-1}\}$. The choice of this generatrix will be in the beginning of the proof of Theorem \ref{lineconen} below \eqref{x} at the end of this section.
By doing this and up to a flipping with respect to span$\{e_1, \cdots, e_{n-1}\}$, the cone $\calC$ can be seen as a graph of a function $h$ from a domain $U \to \R_+$ where $U \subset$ span$\{e_1, \cdots, e_{n-1}\}$ is the orthogonal projection of $\calC$ to span$\{e_1, \cdots, e_{n-1}\}$ (noting that by requiring the diameter of $\Sigma$ sufficiently small we could assume that the projection acting on $\calC$ is a homeomorphism and the function is well-defined on $U$). And in the following, we will consider $\calC^\delta$ as the vertical $\delta$-neighbourhood of $\calC=$ graph$(h)$, i.e.
$$ \calC^\delta = \{ (x,y)\in U \times \R: |y-h(x)|< \delta \}. $$

We first prove the following result.

\begin{lemma} \label{twopt}
Let $\Sigma$ be as in Theorem \ref{planemain}. For  any line $L \subset \R^n$ that is not a generatrix of $\calC$,
$L \cap \calC$ consists of at most $2$ points.
\end{lemma}

\begin{proof}
   Assume $L \cap \calC$ consists of at least three points $x,y,z$ and
   \begin{equation}\label{y}
     y = \eta x+ (1- \eta)z, \mbox{ for some $\eta \in (0,1)$}.
   \end{equation}
    Consider the radial projection $\bf{r}$ at $o$, i.e.
    $${\bf r}(p) = \frac{p}{|p|} \in S^{n-1}, \quad \forall p \in \R^n\setminus \{o\}.$$
    Since $L$ is not a a generatrix of $\calC$, $L \cap \{o\} = \emptyset$. Thus there exists a unique plane $P_L \subset R^n$ passing through $o$ containing $L$.
    One observes that
    $$  {\bf r}(L) = P_L \cap S^{n-1} := S^1_L \mbox{ is a great circle}$$
    and
    $$  {\bf r}(\calC) = {\bf r}(\bigcup_{p \in \Sigma} L_{o,p}) = \bigcup_{p \in \Sigma}L_{o,p} \cap S^{n-1} = \Sigma.$$
    Thus
    \begin{equation}\label{y'}
      \{x',y',z'\} := {\bf r}(\{x,y,z\}) \subset {\bf r}(L \cap \calC) \subset S^1_L \cap \Sigma.
    \end{equation}
    Since $\diam \Sigma< \frac{\pi}{100}$, $|x'-z'|< \frac{\pi}{100}$. Thus the arc $\widehat{x'z'} \subset S^1_L$ with length $=\angle(x',z')< \pi$ is the unique length-minimising geodesic joining $x'$ and $z'$.
    By the condition $\Sigma$ has non-vanishing geodesic curvature ($n=3$) and sectional curvature $>1$ ($n \ge 4$), we know that
    $\Sigma$ is a strictly convex set in $S^{n-1}$ under the standard spherical metric. Thus for any two distinct points on $\Sigma$, the unique length-minimising geodesic segment joining them does not intersect $\Sigma$ except the endpoints.

However, \eqref{y} implies
$$ y' \subset \widehat{x'z'}, $$
and \eqref{y'} implies
$$ \{x',y',z'\}  \subset \widehat{x'z'} \cap \Sigma, $$
which is a contradiction.

  The proof is complete.
\end{proof}

We have the following consequence.

\begin{cor} \label{int2}
   Let $\Sigma$ be as in Theorem \ref{planemain}. Then $L \cap \calC^\delta$ has at most $2$ connected components.
\end{cor}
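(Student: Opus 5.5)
We need to show that $L\cap\calC^\delta$ has at most two connected components. The plan is to pass from the $\delta$-neighbourhood back to the cone itself via the graph structure set up earlier. Recall the reduction: $\calC$ is the graph of a function $h:U\to\R_+$ over $U\subset{\rm span}\{e_1,\dots,e_{n-1}\}$, and $\calC^\delta$ is its vertical $\delta$-neighbourhood $\{(x,y): |y-h(x)|<\delta\}$.

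Parametrise $L$ by $\ell(\tau)=(x(\tau),y(\tau))$ and consider the signed function $\phi(\tau):=y(\tau)-h(x(\tau))$, defined on the interval $I=\{\tau: x(\tau)\in U\}$. If $L$ is vertical, $I$ is a point or empty and there is nothing to prove. If $U$ is not convex, we first shrink $\Sigma$ so that $U$ is convex, using the same smallness reductions already made. Then $L\cap\calC^\delta=\{\tau\in I: |\phi(\tau)|<\delta\}$, and its connected components are maximal subintervals on which $-\delta<\phi<\delta$.

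\textbf{Step 1: convexity of $h$.} The key claim is that $h$ is convex (or concave, after the flip) on $U$, so that $\phi$ is concave or convex along $L$. I would justify this using the strict convexity of $\Sigma$ in $S^{n-1}$ from the proof of Lemma \ref{twopt}. Since $\Sigma$ is small and strictly convex, the cone $\calC_+$ over it bounds a convex cone region, and $\calC$ is part of its boundary. This boundary, seen from the chosen generatrix direction, is the graph of a convex function. An alternative route is to use Lemma \ref{twopt} directly: a continuous function whose graph meets every non-generatrix line in at most two points is, on a convex domain, convex or concave in a consistent way.

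\textbf{Step 2: counting components.} Once $\phi$ is concave on $I$, the superlevel set $\{\phi>-\delta\}$ is an interval $J$. On $J$, the set $\{\phi<\delta\}$ is the complement in $J$ of the convex superlevel set $\{\phi\ge\delta\}$, which is a subinterval of $J$. Removing a subinterval from an interval leaves at most two intervals, so there are at most two components. The convex case is symmetric, with the roles of the superlevel and sublevel sets exchanged. If $L$ is a generatrix, then $\phi\equiv 0$ on the segment and there is one component.

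The main obstacle is Step 1, namely rigorously deriving convexity of $h$ from the curvature hypothesis on $\Sigma$. If that proves delicate, I would fall back on a purely topological argument from Lemma \ref{twopt}. Suppose $\{|\phi|<\delta\}$ had three components. Between consecutive components, $\phi$ leaves $(-\delta,\delta)$ and has a fixed sign there. A vertically translated line $L+c\,e_n$, with $|c|<\delta$ chosen suitably, then yields a continuous $\phi-c$ with at least three sign changes. By the intermediate value theorem this gives three zeros, that is, three points of $(L+ce_n)\cap\calC$, which contradicts Lemma \ref{twopt}.

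Making this fallback work requires choosing $c$ consistently across the sign patterns. Vertical translation gives a one-parameter family, and the possible sign sequences $(+,-,+)$, $(+,+,+)$, and so on, must each be handled. The alternating patterns give three zeros directly, while the monotone patterns need the translation to create the extra crossings.
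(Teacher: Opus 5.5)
The paper gives no proof of Corollary \ref{int2}; it is simply listed as a consequence of Lemma \ref{twopt}. Your main route is different, and it is sound. Instead of the two-point property you use convexity of $h$. Then $\phi$ is concave along $L$, and $\{|\phi|<\delta\}=\{\phi>-\delta\}\setminus\{\phi\ge\delta\}$ is an interval minus a subinterval.

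Step 1 is true for the reason you indicate, but phrase it differently, since a small piece with boundary does not bound a region. For a sufficiently small strictly convex piece, each hyperplane $T_p\calC=\nu(p)^\perp$, $p\in\Sigma$, supports the convex cone generated by $\Sigma$ and touches it only along the generatrix through $p$. Hence $\calC$ lies in the boundary of that convex cone. Since all these normals are close to $e_n$, $\calC$ is the lower graph of a convex $h\ge 0$.

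What your route buys is that it exposes a hypothesis the paper's one-line deduction hides. Lemma \ref{twopt} says nothing about how often the projection of $L$ leaves and re-enters $U$. For a non-convex piece, which is possible when $n\ge 4$ (e.g.\ a comb-shaped patch, with $L$ skimming the cone near the apex), $L\cap\calC^\delta$ can have more than two components even though Lemma \ref{twopt} still holds. So you genuinely need $U$ convex. That comes from choosing the \emph{shape} of the pieces, not from shrinking them. Take pieces lying over convex cones $V$ in the tangent hyperplane; then $U=V\cap\{|x|^2+h(x)^2\le 1\}$ is convex, because $h^2$ is convex when $h\ge 0$ is convex.

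Your fallback, which is closer to what the paper presumably intends, does not work as described, and you have the difficult case backwards. The same-sign gap patterns are the easy ones: with $c$ just below $\delta$ (resp.\ just above $-\delta$) you get four sign changes. The alternating pattern is the problematic one. Suppose $J_1$ and $J_3$ reach the ends of the parameter interval, and $\phi$ rises from $0.9\delta$ to above $\delta$, falls below $-\delta$, then rises back to $-0.9\delta$. Then every $\phi-c$ with $|c|<\delta$ has at most two zeros. To get three points of $\calC$ you must tilt the line inside the vertical plane over $L$, i.e.\ use $\phi+\ell$ with $\ell$ affine. Here $\ell$ going from $-\delta$ to $\delta$ across the interval works, and this is where the full strength of Lemma \ref{twopt} enters. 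This route also needs convexity of $U$.
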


\begin{lemma} \label{smalla}
  Let $h: U \to \R$ be the function such that graph$(h)=\calC$. For any $a>0$,
  if $\Sigma$ is a sufficiently small piece depending on $a$, then
  \begin{equation}\label{10a}
    |\nabla h(x)| \le \frac{a}{10}, \quad \forall x \in U \setminus\{o\}.
  \end{equation}
\end{lemma}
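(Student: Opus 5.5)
The plan is to compute the graph function $h$ explicitly near the chosen generatrix and observe that $\nabla h$ is controlled by the angular spread of $\Sigma$. After the rotation described above, one generatrix $L_{o,p_0}$ points along $e_1$, and the tangent space $T\calC$ along it is ${\rm span}\{e_1,\dots,e_{n-1}\}$, so $e_n$ is the normal of $\calC$ along $L_{o,p_0}$. Because $\calC$ is a cone with apex $o$, $h$ is positively $1$-homogeneous: $h(\lambda x)=\lambda h(x)$ for $\lambda>0$. Consequently $\nabla h$ is $0$-homogeneous, and $\nabla h(x)$ depends only on the direction $x/|x|$, which ranges over the radial projection of $U$, a small set near $e_1$. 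This explains why the estimate is uniform on $U\setminus\{o\}$, with the apex excluded only because $h$ need not be differentiable there.

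The key step is to express $\nabla h$ through the unit normals of $\calC$. For $q=rp\in\calC$ with $p\in\Sigma$ and $r>0$, the tangent space satisfies $T_q\calC={\rm span}\{p\}\oplus T_p\Sigma$, independently of $r$. Its unit normal in $\R^n$ is $N(p)=\nu(p)$, the normal of $\Sigma$ in $S^{n-1}$: indeed $\nu(p)\perp p$ and $\nu(p)\perp T_p\Sigma$. For a graph we have $N=(-\nabla h,1)/\sqrt{1+|\nabla h|^2}$ up to sign, hence
\[
|\nabla h(x)|=\frac{|\pi_{\{e_n\}^\perp}N(p)|}{|N(p)\cdot e_n|}=\tan\angle\bigl(N(p),\pm e_n\bigr),
\]
where $p={\bf r}(x,h(x))$. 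At the base point $p_0=e_1$ we have $N(p_0)=\nu(p_0)=\pm e_n$. Since $\nu$ is $C^1$ on $\Sigma$, with $\|\nabla\nu\|\le\bar\kappa$ by \eqref{II} (the principal curvatures bound the differential of the Gauss map), we obtain $|\nu(p)-\nu(p_0)|\le C\bar\kappa\,\diam\Sigma$. Choosing $\diam\Sigma$ small in terms of $a$ and $\bar\kappa$ then makes $\angle(N(p),e_n)\le \arctan(a/10)$ for all $p\in\Sigma$, which gives \eqref{10a}.

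The main obstacle is justifying that $\calC$ really is a graph over $U$ and that this graph is $C^1$ away from $o$. The map $(r,p)\mapsto rp$ is an immersion for $r>0$, and projecting onto ${\rm span}\{e_1,\dots,e_{n-1}\}$ is injective when the normals stay within a small angle of $e_n$. I would derive this from the same normal estimate: if two points of $\calC$ had the same projection, the segment joining them would be vertical, which is impossible because the normals are nearly vertical and the piece is small. This is the convexity-type argument already used in Lemma \ref{twopt}. The inverse function theorem then gives $h\in C^1(U\setminus\{o\})$, and the gradient formula above applies.
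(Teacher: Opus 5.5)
Your proof is correct and rests on the same fact the paper uses: $\nabla h$ vanishes along the base generatrix, and shrinking $\Sigma$ keeps it small elsewhere. The routes differ in how the second half is carried out.

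The paper slices $\calC\setminus\{o\}$ into the scaled copies $\Sigma_r$ and projects them to $U_r$. It then appeals to continuity of $\nabla h$ on $U\setminus\{o\}$, together with the closeness of $U_r$ to the generatrix. You instead identify the normal of $\calC$ at $rp$ with $\nu(p)$, so that $|\nabla h|=\tan\angle(\nu(p),\pm e_n)$. You then bound $|\nu(p)-\nu(p_0)|$ through the Gauss map.

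Your version makes explicit two things the paper leaves implicit.
\begin{itemize}
\item The $0$-homogeneity of $\nabla h$. Continuity on the non-compact set $U\setminus\{o\}$ gives no uniform modulus as one approaches $o$. It is the scaling $\nabla h(\lambda x)=\nabla h(x)$ that makes the admissible neighbourhood of $re_1$ proportional to $r$, matching $\diam U_r\sim r\diam\Sigma$.
\item Uniformity in the base point $p_0$. This matters because in the proof of Theorem \ref{lineconen} the rotation is re-chosen depending on $y$.
\end{itemize}
It also gives an explicit piece size, $\diam\Sigma\lesssim a/\bar\kappa$. Mere uniform continuity of $\nu$ would suffice, so no curvature hypothesis is really needed here.

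One caveat: your justification that the projection of $\calC$ onto ${\rm span}\{e_1,\dots,e_{n-1}\}$ is injective is loose. Nearly vertical normals do not by themselves rule out multi-sheeted graphs, and Lemma \ref{twopt} only limits a line to two intersection points, not one. The paper simply assumes this injectivity in its setup. It also follows directly:
\begin{itemize}
\item Two points $rp$ and $r'p'$ have the same projection only if the projections of $p$ and $p'$ lie on the same ray.
\item The map sending $p\in\Sigma$ to the direction of its projection has differential equal to the identity on $T_{p_0}\Sigma={\rm span}\{e_2,\dots,e_{n-1}\}$.
\item Hence this map is injective on a small enough piece, with uniform constants by compactness.
\end{itemize}
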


\begin{proof}
    By the convention at the beginning of this section, there is a generatrix $L_{o,p}$ of $\calC$ with direction $e_1$ where $p \in \Sigma$. In particular, $L_{o,p} \subset U$. We first show the lemma for points in this generatrix.

  Let $x \in L_{o,p}$. Then by our convention at the beginning of this section, we know $T_x\calC =$ span$\{e_1,\cdots,e_{n-1}\}$. Thus at $x$, graph$(h)=\calC$ is tangent to span$\{e_1,\cdots,e_{n-1}\}$. This implies that $\nabla h(x)=0$. Thus the lemma holds for $x \in L_{o,p}\setminus\{o\}$.

  We then prove the lemma for other points. To this end, consider the following decomposition of $U$. note that we can write $\calC$ as
     \begin{equation}\label{sigmar}
       \calC \setminus \{o\}= \bigcup_{r\in[-1,1]} {\bf r}^{-1}(\Sigma) \cap S^{n-1}_r =: \bigcup_{r\in[-1,1]} \Sigma_r
     \end{equation}
  where ${\bf r}^{-1}$ is the preimage set of the radial projection and $S^{n-1}_r$ is the sphere centred at $o$ with radius $r$.
  For each $r \in [-1,1]$, let $U_r$ be the orthogonal projection of $\Sigma_r$ to span$\{e_1,\cdots,e_{n-1}\}$. Then, we know
  $$  U = \bigcup_{r \in [-1,1]} U_r.$$
    Let $y \in U_r$ for some $r \in [-1,1]$.
  Since $\Sigma$ is $C^2$, we know $\nabla h$ is continuously depending on $U\setminus\{o\}$, thus if $\Sigma$ is chosen sufficiently small, we know $U_r$ is also small such that all points $y \in U_r$ are sufficiently close to $U_r \cap L_{o,p}$.
  As a result, $|\nabla h(y)| \le \frac{a}{10}$ for all $y \in U_r$.

  The proof is complete.
\end{proof}

We are able to show Theorem \ref{lineconen}.

\begin{proof}[Proof of Theorem \ref{lineconen}]
    Recall
    $$ a:= \min_{p \in \Sigma}\{ \angle(L,p) \}. $$
    Note that $L^\delta \cap \calC^\delta$ is the $\delta$-neighbourhood of $L \cap \calC^\delta$.
    By
     Corollary \ref{int2},
     It suffices to show each interval $I \subset L$ in the intersection $L \cap \calC^\delta$ has length upper bound $\lesssim \frac{\delta}{a}$.
     Thus,
     we concentrate on a fixed interval $I \subset L$.

    We consider the following two cases depending on the Euclidean distance $d(o,I)$ from $o$ to $I$.

    \medskip
    \emph{Case 1. $I \cap B(o,10 \delta) = \emptyset$.}

    Since $I \subset L \cap \calC^\delta$, we can find $x\in I \subset L \cap \calC^\delta$ and $y \in \calC$ such that
    $$  x = y + \eta \nu_y  $$
    where $\nu_y$ is the unit normal of $T_y\calC$ and $\eta \in (-\delta, \delta)$.
    Equivalently, we have
    \begin{equation}\label{x}
       |x-y| < \delta, \mbox{ and } x-y \perp T_y\calC.
    \end{equation}
    In particular, since $x \notin B(o,10 \delta)$, $|y| > 9 \delta$ and hence $y \ne o$.

    As mentioned in the beginning of this section, we choose $y$ as the first coordinate and $y = |y|e_1$ and $T_y\calC=$ span$\{e_1, \cdots, e_{n-1}\}$. Thus $\nu_y= e_n$ and $x = y+ x \cdot e_n$.
    And we view $\calC$ as the graph of a function $h:U \to \R$ where we recall $U \subset [0,1]^{n-1}\subset$ span$\{e_1,\cdots,e_{n-1}\}$ is the orthogonal projection of $\calC$ to span$\{e_1,\cdots,e_{n-1}\}$.
    Let $y_\ast:= \frac{y}{|y|} \in \Sigma$. We note that the generatrix $L_{o,y_\ast} \subset$ span$\{e_1\}\cap \calC \subset U$. In particular, $y \in U$. The following figure shows the case when $n=3$ and $L$ passing through $\calC$ (where one has $x=y$).

    \begin{figure}[h!]
\begin{center}
\begin{overpic}[scale = 0.5]{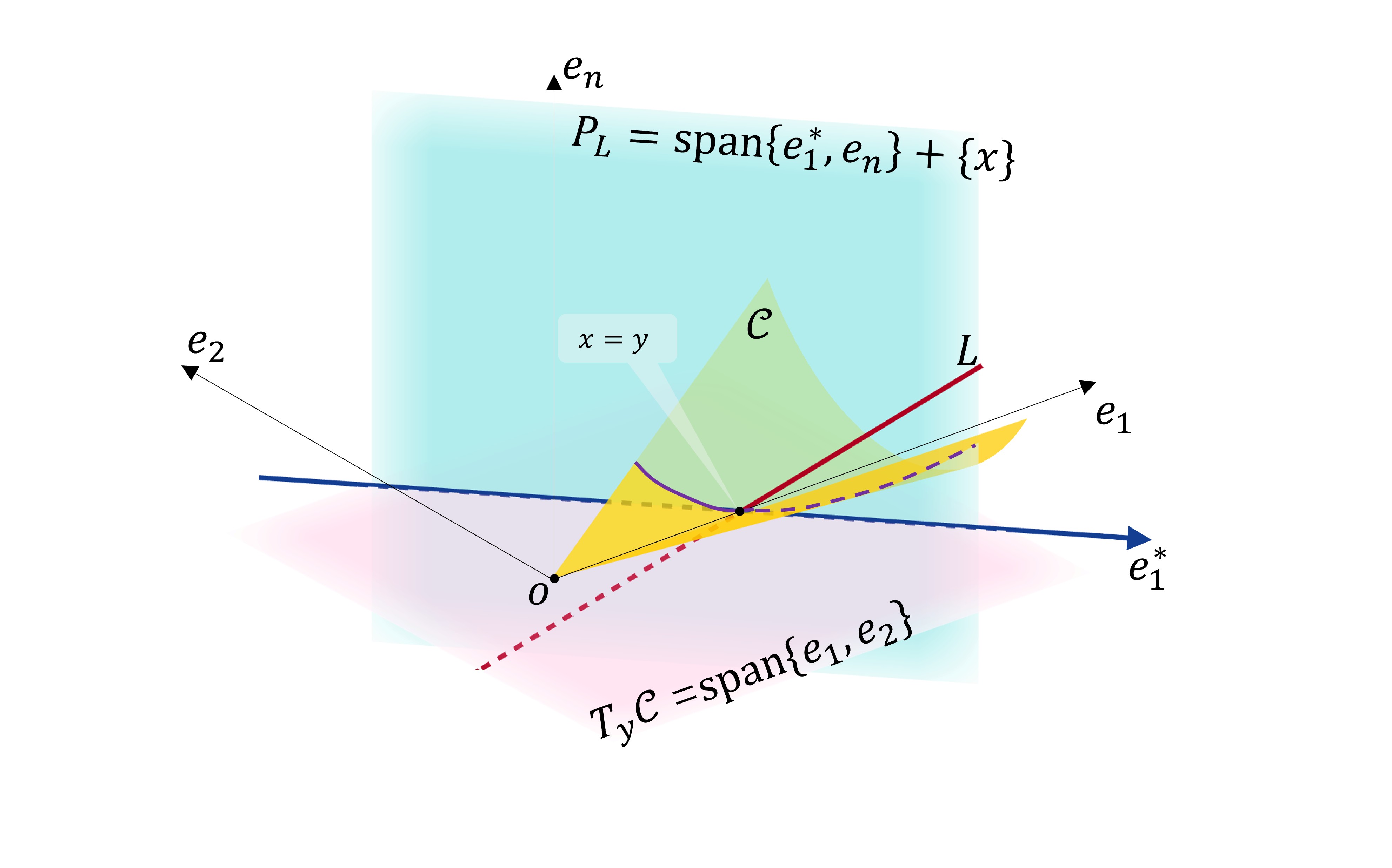}
\end{overpic}
\caption{Cone $\calC$, line $L$, direction $e_1^\ast$ and plane $P_L$}\label{fig1}
\end{center}
\end{figure}

\medskip
    \emph{Subcase 1-1. $L \perp T_y\calC$.}

    Namely, $L$ is parallel to $e_n$ (i.e. $\angle (L,T_y\calC) =\frac{\pi}2$). In this subcase, it is easy to see $|L \cap \calC^\delta|= 2\delta$ since the vertical thickness of $\calC^\delta$ is $2\delta$.

\medskip
    \emph{Subcase 1-2. $\angle (L,T_y\calC) \in [a, \frac{\pi}{2})$.} We refer to Figure \ref{fig2} for an illustration.

        \begin{figure}[h!]
\begin{center}
\begin{overpic}[scale = 0.6]{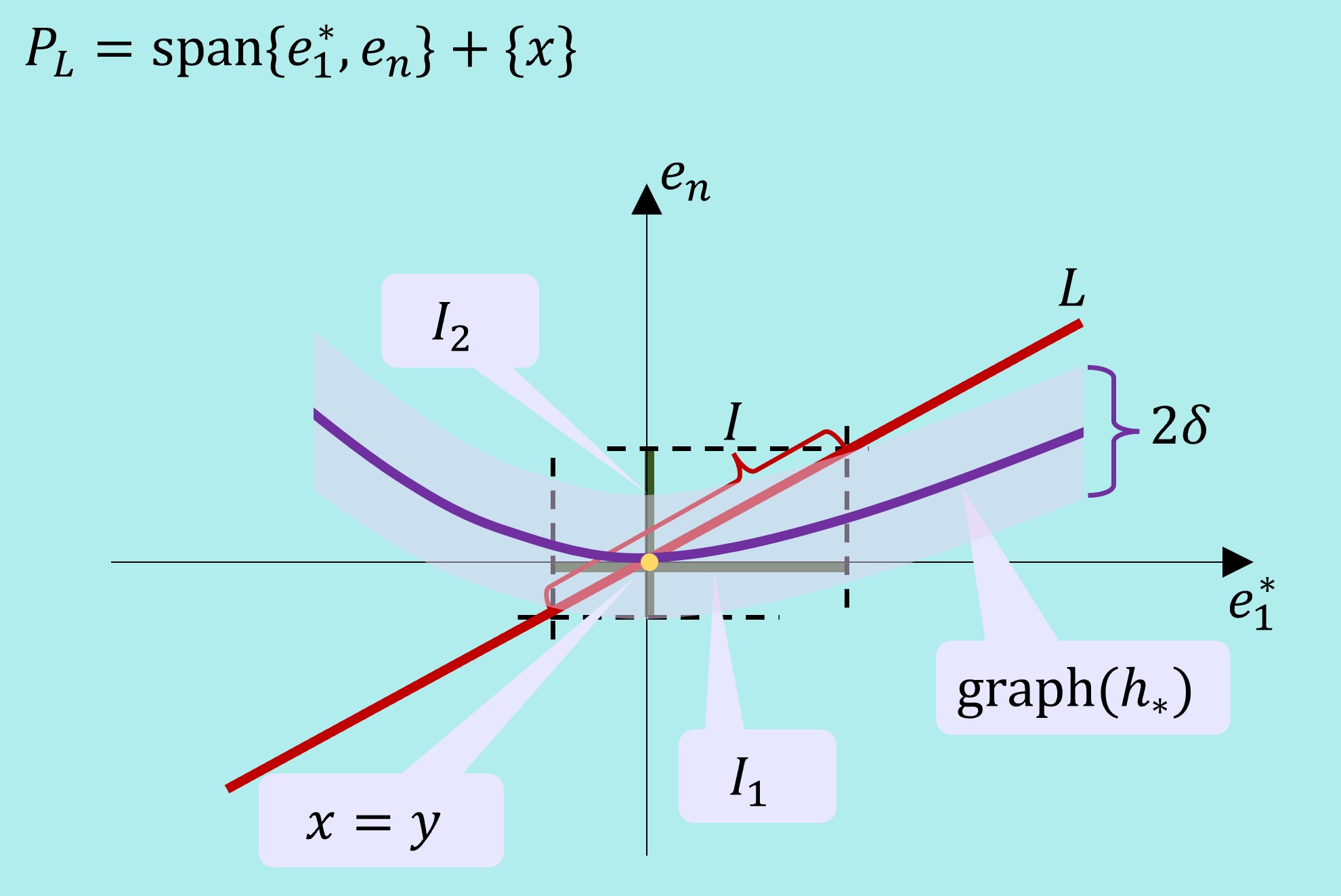}
\end{overpic}
\caption{Graph$(h_\ast)$, intervals $I$, $I_1$ and $I_2$}\label{fig2}
\end{center}
\end{figure}

     Let $e_1^\ast$ be the direction of the orthogonal projection of $L$ into $T_y\calC=$ span$\{e_1, \cdots, e_{n-1}\}$. Since $x \in L$, we know
     $$L \subset P_L:=\mbox{span}\{e_1^\ast,e_n\}+\{x\}.$$
     By $x-y \perp T_y\calC$ (recalling \eqref{x}),
     we know $x-y$ is parallel to $e_n \in P_L$, thus
     $y \in P_L$.
     In the following, we will focus on the plane $P_L$ and it is convenience to let $y$ be the origin of $P_L$, $e_1^\ast$ be the first coordinate and $e_n$ be the second (vertical) coordinate. Thus $y=(0,0) \in P_L$ and $x$ has coordinate
     $(0,x-y)\in P_L$.

     Define
     $$U_\ast := U \cap [\mbox{span}\{e_1^\ast\}+ \{x\}].$$
     Then $L \cap (U_\ast \times \mbox{span}\{e_n\})$ can be seen as a graph in $P_L$ of a linear function $g: U_\ast \to \R$. Let $h_\ast:= h|_{U_\ast}$.
     We write $s$ as the elements of $U_\ast$ when considering $U_\ast$ as a subset of $\mbox{span}\{e_1^\ast\}+ \{x\} \subset P_L$ and write $se_{1}^\ast$ as the elements of $U_\ast$ when considering $U_\ast$ as a subset of $\R^n$. Note that
     $s=0$ is the point $y$.

     To estimate the length of the interval $I \subset L$,
     let $I_1$ be the orthogonal projection of $I$ to $\mbox{span}\{e_1^\ast\}+ \{x\}$ and $I_2$ be the orthogonal projection of $I$ to $\mbox{span}\{e_n\}$. We have the relation
     \begin{equation}\label{I12}
        |I| = \frac{|I_1|}{\cos \angle(I,I_1)} = \frac{|I_1|}{\cos \angle(L,T_y\calC )}, \quad
     |I| = \frac{|I_2|}{\cos \angle(I,I_2)} = \frac{|I_2|}{\sin \angle(L,T_y\calC)}
     \end{equation}
     and
     $$ I_1 = \{s \in U_\ast: |g(s)-h_\ast(s)| < \delta\}. $$
     We first estimate the length $|I_1|$.
     Observe that
     $$  h_\ast(0) =  h_\ast(y) = 0 \mbox{ and } |g(0)|=|x-y| < \delta. $$
     Moreover, thanks to Lemma \ref{smalla} and the assumption of this subcase, one has
     \begin{equation}\label{h'}
       |h_\ast'(s)|= |\nabla h(se_1^\ast) \cdot e_1^\ast| \le \frac{a}{10} \mbox{ and } g'(s)= \pm\tan \angle (L,T_y\calC) \ge a, \quad  \forall s\in U_\ast.
     \end{equation}
     (In the following, we assume $g'(s)= \tan \angle (L,T_y\calC)$ and the case $g'(s)= -\tan \angle (L,T_y\calC)$ is similar.)
     Thus
     $$  (g-h_\ast)(s) = (g-h_\ast)(0) + \int_0^s (g'-h_\ast')(\tau) \, d\tau > -\delta + \int_0^s \frac{a}{2} \, d\tau > -\delta + \frac{a}{2}s, \quad s>0. $$
     Thus if $s> \frac{2}{a}\delta$, then $g(s)>h_\ast(s)+\delta$, which implies $(se_1^\ast,g(s)) \notin \calC^\delta$. Similarly one can show that if $s< -\frac{2}{a}\delta$, then $g(s)< h_\ast(s)-\delta$.
     Thus the interval $I_1$ has length $\lesssim_a \delta$.

     We then estimate $|I_2|$.
     Note that for each point $q \in I_2$, we have
     $$  q \in (h_\ast(s_q)-\delta, h_\ast(s_q)+\delta) \mbox{ for some $s_q\in I_1$}.$$
     Thus
     $$  |I_2| \le |h_\ast(I_1)|+2\delta.   $$
      Thanks to \eqref{h'}, we deduce that
      $$  |I_2| \le \max_{s\in I_1}\{|h'_\ast(s)|\}|I_1|+2\delta < \frac{a}{10}|I_1|+2\delta \lesssim_a \delta.   $$
      Finally, since in this subcase $\sin \angle(L,T_y\calC) \sim \angle(L,T_y\calC) \gtrsim a$, thanks to the second inequality in \eqref{I12}, we conclude that
      $$  |I| \sim_a |I_2| \lesssim_a \delta   $$
      as desired.

%

\medskip
    \emph{Case 2. $I \cap B(o,10 \delta) \ne \emptyset$.}
In this case, since the interval $I \subset L$, we let
$$ I_1':= I \cap B(o,10 \delta) \mbox{ and } I_2':= I \setminus B(o,10 \delta).  $$
Thus
$$  |I| = |I_1'| + |I_2'|. $$
We immediately have $|I_1'| \le \diam B(o,10 \delta) \le 20 \delta.$
In a similar way as in Case 1, one has $|I_2'| \lesssim_a \delta$, which then implies that
$|I| \lesssim_a \delta$ as desired.
The proof is complete.
\end{proof}

\section{Proof of Theorem \ref{cormain}} \label{s5}
In this section, we will first prove Lemma \ref{volume2} and Theorem \ref{cormain}.

Recall the cinematic maps $f_z$ from \eqref{fw},
$$f_z(x):=(e_1(x)\cdot z, \cdots, e_{n-2}(x)\cdot z,  \nu(x)\cdot z) \quad \forall x\in [0,1]^{n-2}$$
where $\{e_i\}_{1 \le i \le n-2}$ is a basis of $T_{\Sigma(x)}\Sigma$ and $\nu(x)\subset T_{\Sigma(x)}S^{n-1}$ is the normal of $\Sigma\subset S^{n-1}$
and the cone with apex $z$ and cross-section $\Sigma$
$$ \calC_z = \bigcup_{x \in \Sigma,r \in [-1,1]} z + rx = \bigcup_{x \in \Sigma,r \in [-1,1]} L_{z,x}. $$
Note that if $y \in L_{w,x}$, then $L_{y,x}$ and $L_{w,x}$ are contained in the same line in $\R^n$.

\begin{lemma} \label{pairs}
    Let $Z \subset B(o,1/2) \subset \R^n$ and $z \in Z$. For any $z' \in Z \setminus\{z\}$,
    if $f_z^\delta \cap f_{z'}^\delta \ne \emptyset$, then $z' \in \calC_w^{2\delta}$.
\end{lemma}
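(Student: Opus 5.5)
Throughout, I read the $w$ in the conclusion as the apex $z$, so the claim is $z' \in \calC_z^{2\delta}$. The plan is to convert the intersection of the two vertical neighbourhoods into a pointwise estimate $|f_z(x)-f_{z'}(x)|<2\delta$ at a single parameter $x$. I then read this estimate as saying that $z'$ is $2\delta$-close to the line $z+\R\Sigma(x)$, which is the kernel direction of $\pi_{P_x}$. Finally I check that the closest point on that line lies on the generatrix $L_{z,x}\subset\calC_z$.

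\emph{Step 1.} Suppose $(x,y)\in f_z^\delta([0,1]^{n-2})\cap f_{z'}^\delta([0,1]^{n-2})$. By the definition \eqref{vertical} of the vertical neighbourhood, $|y-f_z(x)|<\delta$ and $|y-f_{z'}(x)|<\delta$ at the same base point $x$. The triangle inequality then gives $|f_z(x)-f_{z'}(x)|<2\delta$. By linearity of \eqref{fw} in $z$, this says
\[
|(e_1(x)\cdot(z'-z),\dots,e_{n-2}(x)\cdot(z'-z),\nu(x)\cdot(z'-z))|<2\delta .
\]

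\emph{Step 2.} I want to identify the left-hand side with $|\pi_{P_x}(z'-z)|$, so that $\mathrm{dist}(z', z+\R\Sigma(x))<2\delta$, using $P_x^\perp={\rm span}\{\Sigma(x)\}$ from \eqref{nu}. This identification is exact when $\{e_1(x),\dots,e_{n-2}(x),\nu(x)\}$ is an orthonormal basis of $P_x$. The frame \eqref{eix} is only a basis normalised by $M_\Sigma$, so this is the delicate step.

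I would handle it by replacing $e_1,\dots,e_{n-2}$ with their pointwise Gram--Schmidt orthonormalisation. This is a $C^1$ orthonormal frame of $T\Sigma$, and it changes $\pi_{P_x}$ only by an isometry of $P_x$. Such a change affects neither the dimension statements nor the arguments of Section \ref{s2}: the proof of Lemma \ref{cineprop} used only that the $\tilde e_i$ form a $C^1$ frame of $T_w\Sigma$ with bounded Christoffel symbols. With $f_z$ understood in this orthonormal frame, Step 1 gives exactly $|\pi_{P_x}(z'-z)|<2\delta$.

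If one insists on the frame \eqref{eix}, the same argument only produces a $\Sigma$-dependent multiple of $\delta$. So the orthonormalised frame is what makes the constant exactly $2$, and I would state this convention explicitly in the proof.

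\emph{Step 3.} Write $z'-z=\pi_{P_x}(z'-z)+r\Sigma(x)$ with $r=(z'-z)\cdot\Sigma(x)$. Since $z,z'\in B(o,1/2)$, we have $|r|\le|z'-z|<1$. Hence $q:=z+r\Sigma(x)\in L_{z,x}\subset\calC_z$, and
\[
|z'-q|=|\pi_{P_x}(z'-z)|<2\delta ,
\]
which is the claim $z'\in\calC_z^{2\delta}$. The restriction to $Z\subset B(o,1/2)$ is used only here, to keep the foot point on the segment $r\in[-1,1]$ defining the cone.

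The main obstacle is Step 2, namely making the coordinate map $f_z(x)$ an honest isometric picture of $\pi_{P_x}$. Steps 1 and 3 are immediate.
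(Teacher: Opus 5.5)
Your proof is correct and follows the paper's argument. The paper also extracts one base point $x$ with $|f_z(x)-f_{z'}(x)|<2\delta$, reads this as $|\pi_{P_x}(z)-\pi_{P_x}(z')|<2\delta$, and uses $Z\subset B(o,1/2)$ to put the foot point $z+((z'-z)\cdot\Sigma(x))\Sigma(x)$ on the generatrix $L_{z,\Sigma(x)}\subset\calC_z$. The only difference is your Step 2: the paper makes that identification silently, while you correctly observe that with the literal frame \eqref{eix} it holds only up to a $\Sigma$-dependent constant, harmless for the later use of the lemma, and you remove the constant by orthonormalising the frame.
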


\begin{proof}
   Since $f_z^\delta \cap f_{z'}^\delta \ne \emptyset$, we know that there exists $x \in [0,1]^{n-2}$ such that
   $$ |f_z(x)-f_{z'}(x)| < 2\delta. $$
   This implies that the orthogonal projection $\pi_{P_x}(z)$ and $\pi_{P_{x}}(z')$ of $z$ and of $z'$ to the hyperplane $P_{x}= T_{\Sigma(x)}S^{n-1}$
   satisfies
   \begin{equation}\label{zz'}
     |\pi_{P_{x}}(z)-\pi_{P_{x}}(z')| < 2\delta.
   \end{equation}
   Recalling that $\Sigma(x)$ is the unit normal of $P_{x}$ and $z' \in Z \subset B(o,1/2)$, we have
   $$  z' = \pi_{P_{x}}(z') + r_{z'}\Sigma(x) \mbox{ for some $r_{z'} \in [-1/2,1/2]$}.  $$
   Combining \eqref{zz'}, we deduce that
   \begin{equation}\label{z'z}
     |z' - (\pi_{P_{x}}(z) + r_{z'}\Sigma(x))|  < 2\delta.
   \end{equation}
   Moreover,
   by $z \subset Z \subset B(o,1/2)$, we have $|\pi_{P_{x}}(z)-z| < \frac12$ and hence
   $$  \pi_{P_{x}}(z) + r_{z'}\Sigma(x) \in  \bigcup_{r \in [-1,1]} z + r\Sigma(x) = L_{z,\Sigma(x)} \subset \calC_z.  $$
   Combining \eqref{z'z}, we conclude
   $$   z' \in \calC_z^{2 \delta}$$
   as desired.
\end{proof}

\begin{lemma} \label{Lambda}
    Under the assumption on $\Sigma$ in Theorem \ref{cormain}, for any $y \in S^{n-1}$, define
    $$  \Lambda_y:=  \{x \in \Sigma : \mbox{\rm span}\{y\} \subset T_x\calC_o\}.  $$
    Then
    $$  \dim \Lambda_y \le n-3.  $$
\end{lemma}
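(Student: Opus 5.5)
We first identify the tangent space of the cone. For $x \in \Sigma$ and $r \ne 0$, the tangent space of $\calC_o$ at the point $rx$ does not depend on $r$; it equals ${\rm span}\{x\} \oplus T_x\Sigma$. By \eqref{nu}, this is exactly the orthogonal complement of ${\rm span}\{\nu(x)\}$, where $\nu(x)$ is the unit normal of $\Sigma$ in $S^{n-1}$. Hence
$$ \Lambda_y = \{x \in \Sigma : y \cdot \nu(x) = 0\}. $$
So $\Lambda_y$ is the zero set of the scalar $C^1$ function $\phi_y(x) := y\cdot \nu(x)$ on $\Sigma$. Equivalently, after passing to the dual manifold, $\nu(\Lambda_y) = \Sigma^\ast \cap y^\perp$. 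Since $\nu:\Sigma \to \Sigma^\ast$ is a $C^1$ diffeomorphism (its differential has eigenvalues $\kappa_i \ne 0$), it suffices to bound the dimension of $\Sigma^\ast \cap y^\perp$.

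The plan is to show that $\Sigma^\ast \cap y^\perp$ is contained in a finite union of $C^1$ submanifolds of dimension $n-3$, together with at most a discrete set of points. We split $\Sigma^\ast$ into two parts.

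\emph{The transversal part.} At points $w \in \Sigma^\ast \cap y^\perp$ where the gradient of $w \mapsto y\cdot w$ restricted to $\Sigma^\ast$ is nonzero, i.e.\ where $\pi_{T_w\Sigma^\ast}(y) \neq 0$, the implicit function theorem shows that the zero set is locally an $(n-3)$-dimensional $C^1$ manifold.

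\emph{The critical part.} These are the points $w$ with $y \perp T_w\Sigma^\ast$ and $y \perp w$. By Lemma \ref{dual}(b), $T_w S^{n-1} = T_w\Sigma^\ast \oplus {\rm span}\{\nu^\ast(w)\}$, so at such points $y \in {\rm span}\{\nu^\ast(w)\}$, that is, $\nu^\ast(w) = \pm y$. By Lemma \ref{dual}(a), the Gauss map $\nu^\ast$ of $\Sigma^\ast$ has differential with eigenvalues $\kappa_i^{-1} \ne 0$. It is therefore a local diffeomorphism onto its image $\Sigma$, which is $(n-2)$-dimensional. Concretely, $\nu^\ast\circ\nu = {\rm Id}$, so $\nu^\ast$ is injective on $\Sigma^\ast$, and the preimage of $\{\pm y\}$ consists of at most two points. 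Hence the critical set has at most two points.

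Combining the two parts, $\Lambda_y$ is a countable union of $(n-3)$-dimensional $C^1$ manifolds together with at most two points, so $\dim \Lambda_y \le n-3$; when $n=3$ this gives a finite or countable set, of dimension $0$.

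The main obstacle is the first step: justifying rigorously that the tangent space of $\calC_o$ at every point of the generatrix through $x$ is ${\rm span}\{x\}\oplus T_x\Sigma = \nu(x)^\perp$, including a sensible convention at the apex (which we exclude). A secondary point is the smoothness bookkeeping: $\nu$ is only $C^1$ when $\Sigma$ is $C^2$, but $C^1$ is enough for the implicit function theorem and for the injectivity argument.

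Alternatively, one can avoid the dual manifold entirely. Consider $\phi_y(x) = y\cdot\nu(x)$ directly on $\Sigma$: its differential is $\nabla_\zeta\phi_y = y\cdot \nabla_\zeta\nu$, and by the curvature assumption (shape operator invertible) this vanishes for all $\zeta$ only if $y \perp T_x\Sigma$. Combined with $y\cdot\nu(x) = 0$, this forces $y = \pm x$, so again there are at most two critical points.
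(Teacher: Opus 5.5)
Your proof is correct. After the shared first step it follows a different, and more careful, route than the paper.

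Both you and the paper identify $T_x\calC_o=\nu(x)^\perp$ and reduce the problem to $\nu(\Lambda_y)=\Sigma^\ast\cap y^\perp$.

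\emph{The paper's route.} It argues qualitatively. Since $\Sigma^\ast$ has curvature different from that of the great sphere $y^\perp\cap S^{n-1}$, the intersection $\Sigma^\ast\cap y^\perp$ contains no relatively open subset of $\Sigma^\ast$. The paper then asserts that $\Sigma^\ast\cap y^\perp$ is a submanifold, hence of integer dimension, hence of dimension at most $n-3$.

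\emph{Your route.} You do a pointwise transversality analysis instead. Where $\pi_{T_w\Sigma^\ast}(y)\neq0$, the implicit function theorem gives an $(n-3)$-dimensional $C^1$ manifold. At the remaining points, Lemma \ref{dual}(b) forces $y\in{\rm span}\{x\}$, i.e.\ $x=\pm y$. In your alternative version, the same conclusion comes from invertibility of the shape operator $\zeta\mapsto\nabla_\zeta\nu$ on $T_x\Sigma$.

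\emph{What each buys.} Your argument proves exactly the step the paper leaves unjustified. Having empty interior in $\Sigma^\ast$ does not by itself bound the Hausdorff dimension. Likewise, the zero set of a $C^1$ function such as $y\cdot\nu$ need not be a manifold unless some nondegeneracy is verified. You supply that nondegeneracy, and you get the stronger structural statement that $\Lambda_y$ is a $C^1$ $(n-3)$-manifold away from at most two points. Your argument also uses only $\kappa_i\neq0$, not the sign condition behind sectional curvature $>1$. The paper's route is shorter and is phrased purely through the curvature of $\Sigma^\ast$, but it rests on the unproved submanifold claim.

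\emph{Two small remarks.}
\begin{itemize}
\item Nonvanishing $\kappa_i$ only makes $\nu$ a local diffeomorphism. Global injectivity should be credited to $\nu^\ast\circ\nu={\rm Id}$ from Lemma \ref{dual}. Alternatively, avoid it entirely: your direct version with $\phi_y=y\cdot\nu$ on $\Sigma$ needs only local bi-Lipschitz control, which suffices for Hausdorff dimension.
\item The tangent-space identification you flag as the main obstacle is routine. Differentiating $(r,u)\mapsto r\Sigma(u)$ at $r\neq0$ gives image ${\rm span}\{x\}\oplus T_x\Sigma=\nu(x)^\perp$.
\end{itemize}
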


\begin{proof}
    Thanks to the proof of Lemma \ref{dual}, we know $T_x\calC_o = T_{\nu(x)} S^{n-1}$ for all $x \in \Sigma$ where we recall $\nu$ is the unit normal of $\Sigma$ at $x$. Thus we deduce that $x \in \Lambda_y$ is equivalent to $\nu(x) \in y^\perp$. Recalling $\nu(\Lambda_y) \subset \nu(\Sigma) = \Sigma^\ast$,
it suffices to check
\begin{equation}\label{n-3}
   \dim (\Sigma^\ast \cap   y^\perp)  \le n-3.
\end{equation}

    By Lemma \ref{dual}, $\Sigma^\ast$ has nonvanishing geodesic curvature ($n=3$) and sectional curvature $> 1$ ($n \ge 4$).
    Since $y^\perp \cap S^{n-1}$ is an $(n-2)$-equatorial sphere which has sectional curvature $\equiv 1$ and $\dim \Sigma^\ast= n-2$, $\Sigma^\ast \cap y^\perp$ can not contain any open subset of $\Sigma^\ast$, which implies that $\Sigma^\ast \cap y^\perp$ has no interior as a subset of $\Sigma^\ast$. As a result,
    $\Sigma^\ast \cap y^\perp$ can not be $(n-2)$-dimensional. Noting that $\Sigma^\ast \cap   y^\perp$ is a submanifold of $S^{n-1}$, $\Sigma^\ast \cap   y^\perp$ has integer dimension. Hence \eqref{n-3} holds.
\end{proof}

We are ready to show Lemma \ref{volume2} and Theorem \ref{cormain}.

\begin{proof}[Proof of Lemma \ref{volume2} and Theorem \ref{cormain}]
    Recall the point $y \in S^{n-1}$ in Theorem \ref{cormain}.
    It suffices to show that Theorem \ref{cormain} holds for $\Sigma \setminus (\cup_{x \in \Lambda_y}B(x,a))$ for each $a>0$ small enough where $\Lambda_y$ is the set in Lemma \ref{Lambda}.
    Thus we fix $a>0$ small enough and partition $\Sigma \setminus (\cup_{x \in \Lambda_y}B(x,a))$ into small pieces such that all the results in Section \ref{s4} hold.
    We prove Theorem \ref{cormain} for each piece. Fix a piece and still denote it by $\Sigma$. In particular, for each $z \in \R^n$ and $p \in \calC_z\setminus\{z\}$, noting that $\frac{p-z}{|p-z|} \in \Sigma$, we have
    \begin{equation}\label{ang2}
      \angle (y, T_p\calC_z)=\angle (y, T_{p-z}\calC_{o})=\angle (y, T_{\frac{p-z}{|p-z|}}\calC_o) \ge \frac{a}2.
    \end{equation}

    We first prove Lemma \ref{volume2}. By Lemma \ref{cineprop}, let $D>1$ be the bilipschitz constant of the bilipschitz map from $Z$ to $F=\{f_{z} \in C^2( [0,1]^{n-2},[0,1]^{n-1})\}_{z \in Z}$. For $\epsilon>0$, we choose $\delta_0=\delta_0(\epsilon)>0$ such that
    $$  \delta_0^\epsilon < \min\{D^{-(n-2)}, \log(1/\delta_0)^{-1}\}.  $$
    We will show Lemma \ref{volume2} holds for all $\delta<\delta_0$.
    Recalling $\dim Z = t$,
     we can find a $D^{-1}\delta$-separated set $Y\subset Z$ with $|Y| \sim \delta^{-t}$. Recall from \eqref{vertical} that $f_z^\delta$ is the vertical $\delta$-neighbourhood of $\mbox{graph}(f_z)$, we deduce that
     $$  \bigcup_{z \in Z}\mbox{graph}(f_z) \subset \bigcup_{z \in Y}f_z^\delta.$$
     Recalling from \eqref{heu31} that
     $$\mathcal{Z} = \bigcup_{z \in Z} \bigcup_{x \in X_z}(z, f_z(z)) \subset [0,1]^{2n-3}$$
     where
     $X_z \subset [0,1]^{n-2}$ with $\calL^{n-2}(X_z)>C_Z>0$ for each $z \in Z$. By \cite[Lemma 2.7]{orponen2023hausdorff}, we can find a subset $\delta$-separated $(\delta,n-2,\delta^{-\epsilon})$-set $\widetilde{X}_z \subset X_z$ with $\delta^{\epsilon}\delta^{-(n-2)} \le |\widetilde{X}_z| \le \delta^{-(n-2)}$.
    To show
    $ |\mathcal{Z}|_{\delta} \gtrsim \delta^{3\epsilon - (n-2 +t)}$ in Lemma \ref{volume2},
    it suffices to show
    \begin{equation}\label{caly}
      |\mathcal{Y}|_{\delta}\gtrsim \delta^{3\epsilon - (n-2 +t)}
    \end{equation}
    where
     $$  \mathcal{Y} := \bigcup_{z \in Y} \bigcup_{x \in \widetilde{X}_z }(x, f_z(x)). $$
The following proof is devoted to showing \eqref{caly}.

Recalling from \eqref{le1} that $y \in S^{n-1}$ is the point such that
    $$ \dim \pi_{T_yS^{n-1}}(Z) \le n-2.$$
By \cite[Lemma 2.7]{orponen2023hausdorff}, for any $s < \dim \pi_{T_yS^{n-1}}(Z)$, we can find a $\delta$-separated $(\delta,s,\delta^{-\epsilon})$-set $W \subset \pi_{T_yS^{n-1}}(Z)$ with $|W| \le \delta^{-s}$. Moreover, $\pi_{T_yS^{n-1}}(Z) \subset W^{n\delta}$ where $W^{n\delta}= \cup_{w \in W} B(w, n \delta)$ is the $n\delta$-neighbourhood of $W$ and thus the preimage set satisfies
\begin{equation}\label{ndz}
  \bigcup_{w \in W}\pi_{T_yS^{n-1}}^{-1}(B(w,n\delta)) = \pi_{T_yS^{n-1}}^{-1}(W^{n\delta})  \supset Z \supset Y.
\end{equation}
Recalling that $Y \subset B(o,1/2)$, we observe that $\pi_{T_yS^{n-1}}^{-1}(B(w,n\delta))\cap Y$ is contained in the $n\delta$-neighbourhood $L_{w,y}^{n\delta}$ of the line segment $L_{w,y}= \cup_{r \in [-1,1]} \{w + ry\}$.
Combining \eqref{ndz}, we deduce that
\begin{equation}\label{incly}
  Y \subset   \bigcup_{w \in W}L_{w,y}^{n\delta}.
\end{equation}
Noting that $\{L_{w,y}\}_{w \in W}$ are parallel line segments perpendicular to $T_yS^{n-1}$, we have
$$ \pi_{T_yS^{n-1}}(L_{w,y}) = w \mbox{ and } \pi_{T_yS^{n-1}}(L_{w,y}^{n\delta}) = B^{n-1}(w,n\delta) \subset T_yS^{n-1}. $$
Thus for any ball $B^n(x,r) \subset \R^n$ with $r \in [\delta,1]$, we have
\begin{align*}
  |\{w \in W: L_{w,y}^{n\delta} \cap B^n(x,r) \ne \emptyset\}| & = |\{w \in W: \pi_{T_yS^{n-1}}[L_{w,y}^{n\delta} \cap B^n(x,r)] \ne \emptyset\}| \\
   & =|\{w \in W: B^{n-1}(w,n\delta) \cap B^{n-1}(\pi_{T_yS^{n-1}}(x),r) \ne \emptyset\}| \\
   & \lesssim |W^{n\delta} \cap B^{n-1}(\pi_{T_yS^{n-1}}(x),r)|_{\delta} \\
   & \lesssim |W \cap B^{n-1}(\pi_{T_yS^{n-1}}(x),r)|_{\delta} \le \delta^{-\epsilon}r^s|W|
\end{align*}
where in the last inequality we recall that $W$ is a $(\delta,s,\delta^{-\epsilon})$-set.

For each $z \in Y$, define
$$  Y_z : = \calC_z^{n\delta} \cap (Y\setminus \{z\}).  $$
For each $r \in [\delta,1]$, thanks to \eqref{incly}, recalling \eqref{ang2} and applying Theorem \ref{lineconen}, we obtain that
\begin{equation}\label{ds}
    |Y_z \cap B(x,r)|  \le  |\{w \in W: L_{w,y}^{n\delta} \cap B(x,r) \ne \emptyset\}| \cdot |L_{w,y}^{n\delta} \cap \calC_z^{n\delta}|_\delta \lesssim_a \delta^{-\epsilon}r^s|W|.
\end{equation}
   For each dyadic $r \in [\delta,1]$ and $z \in Y$, define
   $$  Y_{z,r}:= \{ z' \in Y_z : |z'-z|  \in (r,2r]  \} . $$
   We deduce from \eqref{ds} that
   \begin{equation}\label{yzr}
     |Y_{z,r}| \le |Y_z \cap B(z,2r)| \lesssim_a \delta^{-\epsilon}r^s|W| \le \delta^{-\epsilon}r^s\delta^{-s} .
   \end{equation}

   Moreover, recall from Lemma \ref{cineprop} that
  $F=\{f_{z} \in C^2( [0,1]^{n-2},[0,1]^{n-1})\}_{z \in Z}$ is a cinematic family and $F$ is biLipschitz homeomorphic to $Z$. Applying Lemma \ref{enbhd}, we have
  \begin{equation}\label{bilip}
    \mathcal{L}^{2n-3}( f^\delta_z(X_z^\delta)\cap  f^\delta_{z'}(X_{z'}^\delta)) \lesssim \frac{\delta^{2n-3}}{\|f_z-f_{z'}\|_{C^2([0,1]^{n-2})}^{n-2}} \le D^{n-2} \frac{\delta^{2n-3}}{|z-z'|^{n-2}} \le \frac{\delta^{-\epsilon}\delta^{2n-3}}{|z-z'|^{n-2}}.
  \end{equation}
  Combining \eqref{yzr} and \eqref{bilip}, we arrive at
    \begin{align*}
\sum_{z,z'\in Y}\mathcal{L}^{2n-3}( f^\delta_z(X_z^\delta)\cap  f^\delta_{z'}(X_{z'}^\delta)) &= \sum_{z\in Y}\sum_{r=\delta}^1\sum_{z'\in Y_{z,r}} \mathcal{L}^{2n-3}( f^\delta_z(X_z^\delta)\cap  f^\delta_{z'}(X_{z'}^\delta))  \\
& \lesssim \sum_{z\in Y}\sum_{r=\delta}^1 \sum_{z'\in Y_{z,r}} \frac{\delta^{2n-3}}{\|f_z-f_{z'}\|_{C^2([0,1]^{n-2})}^{n-2}}
 \le \sum_{z\in Y}\sum_{r=\delta}^1 \sum_{z'\in Y_{z,r}} \frac{\delta^{-\epsilon}\delta^{2n-3}}{|z-z'|^{n-2}} \\
& \le \sum_{z\in Y}\sum_{r=\delta}^1 |Y_{z,r}|\frac{\delta^{-\epsilon}\delta^{2n-3}}{r^{n-2}}
 \le \sum_{z\in Y}\sum_{r=\delta}^1 \delta^{-2\epsilon} r^{s}\delta^{-s}\frac{\delta^{2n-3}}{r^{n-2}} \\
 & = |Y|\log(1/\delta)\delta^{-2\epsilon} \frac{\delta^{2n-3-s}}{r^{n-2-s}} \le \delta^{-3\epsilon}
  \delta^{n-1}|Y|
  \end{align*}
where in the last line we recall $s \le n-2$ from the assumption \eqref{le1}.
Similar to inequality \eqref{cs} in Section \ref{s3}, we conclude that
 \begin{align*}
  \mathcal{L}^{2n-3}(\mathcal{Y}) &=  \mathcal{L}^{2n-3}(\bigcup_{z \in Y}  f^\delta_z(X_z^\delta))  \ge \frac{[\sum_{z \in Y}\mathcal{L}^{2n-3}( f^\delta_z(X_z^\delta)]^2}{\sum_{z,z'\in Y}\mathcal{L}^{2n-3}( f^\delta_z(X_z^\delta)\cap  f^\delta_{z'}(X_{z'}^\delta))}  \\
  & \gtrsim \frac{|Y|^2 \delta^{2n-2}}{\delta^{-3\epsilon} \delta^{n-1}|Y|} = \delta^{3\epsilon} \delta^{n-1}|Y|.
  \end{align*}
  Recalling $|Y| \gtrsim \delta^{-t}$, we obtain \eqref{caly}
as desired. The proof of Lemma \ref{volume2} is complete.

Finally, Theorem \ref{cormain} can be shown in the same way as the proof of Theorem \ref{planemain} in Section \ref{s3} with the help of Lemma \ref{volume2}. We omit the details.
\end{proof}

\bibliographystyle{plain}
\bibliography{references.bib}

\end{document}